\numberwithin{equation}{section}
\theoremstyle{plain}
\newtheorem{thm}{Theorem}[section]
\newtheorem{de}[thm]{Definition}
\newtheorem{prop}[thm]{Proposition}
\newtheorem{lem}[thm]{Lemma}
\newtheorem{cor}[thm]{Corollary}
\newtheorem{rem}[thm]{Remark}
\newtheorem{fact}[thm]{Fact}
\newtheorem{thmalph}{Theorem}
\newtheorem{probalph}[thmalph]{Problem}
\newcommand{\bb}{\mathbb}
\newcommand{\cal}{\mathcal}
\newcommand{\op}{\operatorname}
\title[symmetric pairs with discretely decomposable modules]
{Classification of symmetric pairs with discretely decomposable restrictions
of $(\frak{g},K)$-modules}
\thanks{%
2010 MSC: Primary 22E46; 
Secondary 53C35
}
\keywords{real reductive group,
 branching law, symmetric pair, 
minimal representation,
Harish-Chandra module,
discretely decomposable restriction, 
 nilpotent variety}
\author[Toshiyuki Kobayashi]{Toshiyuki Kobayashi$^*$}
\address[Toshiyuki Kobayashi]{Kavli IPMU
         and Graduate School of Mathematical Sciences, 
         The University of Tokyo, 3-8-1 Komaba, Meguro, 
        153-8914 Tokyo, Japan}
\thanks{* Partially supported by
        Grant-in-Aid for Scientific Research (B) (22340026), Japan
        Society for the Promotion of Science}
\author[Yoshiki Oshima]{Yoshiki Oshima$^{**}$}
\thanks{** Supported by
        Grant-in-Aid for JSPS Fellows}
\address[Yoshiki Oshima]{Kavli IPMU, The University of Tokyo, 
           5-1-5 Kashiwanoha, Kashiwa, 277-8583 Chiba, Japan}
\begin{document}

\maketitle
\begin{abstract}
We give a complete classification
 of reductive symmetric pairs $(\frak{g}, \frak{h})$
with the following property:
there exists at least one infinite-dimensional irreducible $(\frak{g},K)$-module $X$
that is discretely decomposable
as an $(\frak {h},H\cap K)$-module.  

We investigate further
 if such $X$ can be taken to be a minimal representation,
 a Zuckerman derived functor module $A_{\frak{q}}(\lambda)$, 
 or some other unitarizable $(\frak{g},K)$-module.  
The tensor product 
 $\pi_1 \otimes \pi_2$
 of two infinite-dimensional irreducible $(\frak{g},K)$-modules
 arises as a very special case
 of our setting.  
In this case,
 we prove that $\pi_1 \otimes \pi_2$ 
 is discretely decomposable if and only if
 they are simultaneously highest weight modules.

\end{abstract}

\section{Introduction}
The subject of this article is discretely decomposable restrictions 
 of irreducible representations
 with respect to symmetric pairs.  

In order to explain our motivation,
 we begin by confining ourselves 
 to unitary representations.  
Let $\pi$ be an irreducible unitary representation
 of a Lie group $G$, 
 and $H$ a subgroup in $G$.  
We may think of $\pi$
 as a representation 
 of the subgroup $H$,
 denoted simply by $\pi|_{H}$.  
Then the restriction $\pi|_{H}$
 is no longer irreducible in general,
 but is unitarily equivalent to a direct integral 
 of irreducible unitary representations of $H$, 
 possibly with continuous spectrum.  
Branching problems ask how the restriction 
 $\pi|_{H}$ decomposes.  

In the case where $(G,H)$ is a pair of real reductive Lie groups,
 we can take $K$ and $H\cap K$ to be
 maximal compact subgroups of $G$ and $H$, 
 respectively. 
Then, 
 as an algebraic analogue of branching problems 
 of unitary representations,
 we may consider 
 how the underlying $(\frak{g},K)$-module $X$ of $\pi$
 behaves as an $(\frak{h}, H\cap K)$-module
 in the category of Harish-Chandra modules.  
We proved in \cite{kob98ii}
 that either (1) occurs or (2) occurs:
\begin{itemize}
\item[(1)]
 $X$ is discretely decomposable
 as an $(\frak{h}, H\cap K)$-module.  
\item[(2)]
$\operatorname{Hom}_{\frak{h}, H\cap K}(Y,X)=0$
 for any irreducible $(\frak{h}, H\cap K)$-module $Y$.  
\end{itemize}
The case (1) fits well into algebraic approach 
 to branching problems.  
In this case, the branching laws of the restrictions of $\pi$ and $X$
 coincide in the following sense:
\begin{alignat*}{3}
&\pi|_{H}
&&\simeq \sideset{}{^\oplus}\sum_{\tau\in\widehat{H}}
         m_\pi(\tau)\tau
&&\quad\text{(Hilbert direct sum)},
\\
&X|_{(\mathfrak{h}, H\cap K)}
&&\simeq \bigoplus_{\tau\in\widehat{H}}
         m_\pi(\tau)\tau_{H\cap K}
&&\quad\text{(algebraic direct sum)},
\end{alignat*}
where $\widehat{H}$ denotes the set of equivalence classes of irreducible
unitary representations of $H$, 
 $\tau_{H\cap K}$ is 
 the underlying $(\frak{h}, H\cap K)$-module
 of $\tau$,
 and the multiplicity $m_{\pi}(\tau)$
 is the same in both the analytic and algebraic
 branching laws \cite{ASPM}.    
On the other hand,
 the case (2) occurs if the irreducible decomposition 
 of the restriction $\pi|_{H}$ involves continuous spectrum.

More generally,
 the feature of the restriction of irreducible $(\frak{g},K)$-modules
 $X$
 remains essentially 
 the same as  $(\frak{h}, H\cap K)$-modules without assuming unitarizability.
 Namely, 
 either (1) or (2) occurs 
 for any irreducible $(\frak{g},K)$-module $X$
even if $X$ does not come from a unitary representation
 of the group $G$
(we note that \lq{discrete decomposability}\rq\
 in Definition~\ref{beta}
 is slightly weaker
 than \lq{complete reducibility}\rq).  

Obviously (1) always holds
 for the pairs $(G,H)$ with $H=K$ because $K$ is compact,
 whereas (1) never holds
 for the pair $(G,H)=(SL(n,{\mathbb{C}}),SL(n,{\mathbb{R}}))$
 if $\dim X=\infty$ (\cite[Theorem 8.1]{kob98ii}).  
Such pairs are examples of so-called {\it{symmetric pairs}} $(G,G^{\sigma})$, 
 where $G^{\sigma}$ is the fixed point group
 of an involutive automorphism $\sigma$
 of $G$.    
The classification of reductive symmetric pairs
 was accomplished
 by M.~Berger \cite{ber}
 on the Lie algebra level
 $(\frak{g},\frak{g}^{\sigma})$.

In this paper
 we highlight the restriction
 of representations
 with respect to symmetric pairs $(G,G^{\sigma})$.  
The tensor product
 of two representations
 can be treated
 as a special case
 of this framework.  
Indeed, 
 the \lq{group case}\rq\
 $(G' \times G',
 \operatorname{diag}G')$
 is a symmetric pair
 as $\operatorname{diag} G'$
 is the fixed point group
 of the involution $\sigma$
 given by $\sigma(x,y)=(y,x)$.
Thus branching laws
 with respect to symmetric pairs
 are thought of as a natural generalization
of the irreducible decomposition 
 of the tensor product representations.

We consider the following.  

\begin{probalph}
\label{prob:a}
Classify all the reductive symmetric pairs
 $(G,G^\sigma)$
 for which there exists at least one infinite-dimensional 
 irreducible $(\frak{g},K)$-module $X$
 satisfying the property (1).  
\end{probalph}

The problem reduces to the following two cases: 
\begin{itemize}
\item $\frak {g}$ is a simple Lie algebra;
\item $(G,G^{\sigma})$ is the \lq{group case}\rq\
 $(G' \times G',\operatorname{diag} G')$
 with $\frak {g}'$ simple.  
\end{itemize}

Our main result of this paper
 is a complete solution to 
 Problem~\ref{prob:a} 
 on the Lie algebra level.  
The classification is given in
 Theorem~\ref{thm:ddpair}
 for simple $\frak g$, 
 and in Theorem~\ref{tensor}
 for the \lq{group case}\rq.
For simple $\frak g$, 
 we shall see
 that there is quite a rich family
 of such symmetric pairs $(G,G^{\sigma})$
 in addition to the obvious case
 where $G^{\sigma}=K$ 
 or where $(\frak {g},\frak{g}^{\sigma})$ is 
 of holomorphic type
 (Definition~\ref{de:holpair}).  
See Table~\ref{ddpairs}.  
Our list contains even the case 
 where $\frak{g}$ is complex and
 $\frak{g}^\sigma$ is its real form (Corollary~\ref{complex}).

In the course of the proof, 
 we need a finer understanding of the $(\frak{g}, K)$-modules $X$
 that are discretely decomposable
 as $(\frak {g}^{\sigma}, K^{\sigma})$-modules
 (cf.~\cite{kob94,kob98i,kob98ii}):
\begin{probalph}
\label{prob:b}
Let $(\frak {g}, \frak{g}^{\sigma})$ be 
 a reductive symmetric pair.  
Which infinite-dimensional irreducible $(\frak {g}, K)$-modules
 $X$ are discretely decomposable
 as $(\frak {g}^{\sigma}, K^{\sigma})$-modules?
\end{probalph}

Problem~\ref{prob:b} was solved previously
when $X$ is any Zuckerman derived functor 
module $A_{{\mathfrak {q}}}(\lambda)$,
 which is cohomologically induced from a $\theta$-stable
 parabolic subalgebra $\frak {q}$ of $\frak g_{\mathbb{C}}$:
 we gave a necessary and sufficient condition
 for discrete decomposability 
 in \cite{kob94,kob98ii}, 
and a complete classification
 of the triples $(\frak{g},\frak{g}^\sigma,\frak{q})$
 such that $A_\frak{q}(\lambda)$
 is discretely decomposable 
 as a $(\frak {g}^{\sigma}, K^{\sigma})$-module
 in a recent paper \cite{KoOs}.
We then observed that there exist symmetric pairs 
 $({\mathfrak {g}}, {\mathfrak {g}}^{\sigma})$
 for which none of $A_{\mathfrak {q}}(\lambda)$
 is discretely decomposable
 as a $(\frak {g}^{\sigma}, K^{\sigma})$-module
 except for $\frak {q}=\frak {g}_{\bb{C}}$.  
Even so, however, 
some other $(\frak g,K)$-modules
 $X$ might satisfy the property (1).  
This happens,
 for example,
 when $\frak g$ is a split real form 
 of $\frak {e}_6^{\bb C}$,
 $\frak {e}_7^{\bb C}$,
 and $\frak {e}_8^{\bb C}$.  
This observation has brought us to focus on Problem~\ref{prob:b}
 for some other \lq{small}\rq\ representations $X$
 as well.  
In particular,
we prove an easy-to-check criterion in Theorem~\ref{mindd}
 for discrete decomposability of a minimal representation $X$.  
These results
 serve as a part of the proof
 of our main results.  

One might wonder in Problem~\ref{prob:b}
 whether or not it is possible 
 to find such a unitarizable $(\frak g,K)$-module $X$
 if there exists at least one such
 (possibly, non-unitarizable) $X$.  
We shall show 
  in Corollary~\ref{unitary}
 that this is always possible.
(Notice
 that the classification 
 of unitarizable irreducible $(\frak g,K)$-modules
 is a long-standing problem
 in representation theory.  
 Fortunately,
 it turns out that previous achievements
 on this unsolved problem
 suffice to obtain Corollary~\ref{unitary}.)

Finally,
 we prove in Theorem~\ref{tensor} that the tensor product of
 two infinite-dimensional irreducible $(\frak{g},K)$-modules
 is discretely decomposable if and only if
 $G/K$ is a Hermitian symmetric space
 and these modules are simultaneously highest weight modules
 or they are simultaneously lowest weight modules.
This is in sharp contrast
 to the solution to Problem~\ref{prob:a}
 for symmetric pairs $(\frak {g}, \frak {g}^{\sigma})$
 with $\frak {g}$ simple:
 in this case there exist quite 
 often a family of irreducible $(\frak {g}, K)$-modules $X$
 that are discretely decomposable
 as $(\frak {g}^{\sigma}, K^{\sigma})$-modules
 but that are neither highest weight modules nor lowest weight modules.  

In unitary representation theory of real reductive groups,
it is in general a hard problem to find branching laws.
Even the tensor product of two irreducible unitary representations,
which is a special case of the restriction to symmetric pairs,
seems to be too wide for detailed analysis because of its wild feature
like infinite multiplicities in its irreducible decomposition for 
both discrete and continuous spectrum,
and the branching problem has been far from being solved.
Our motivation to highlight discretely decomposable restrictions is to single out 
a very nice framework among general branching problems for unitary representations,
in which we could expect a detailed study,
and in which even a purely algebraic approach
would make sense. 
See \cite{DV,GW,kob93,kob94,mfkorea,KO,KOP,S11}
 for explicit branching laws
 in various settings in the discretely decomposable case.   

The significance of discretely decomposable branching laws is not limited
inside representation theory in the narrow sense.
Restrictions of representations arise naturally from geometric settings.
In particular,
\textit{discretely decomposable} restrictions have found
their applications in the following different areas of mathematics in recent years:
\begin{itemize}
\item
modular varieties --- vanishing theorems of cohomologies \cite{KOd},
\item
construction of new discrete series for \textit{non-symmetric} spaces \cite{kob94},
\item
spectral analysis on \textit{non-Riemannian} locally symmetric spaces \cite{KaK},
\item
parabolic geometry --- construction of equivariant differential operators \cite{F}.
\end{itemize}

The $(\mathfrak{g},K)$-modules with the smallest Gelfand--Kirillov dimension
 (e.g.\ minimal representations) are especially interesting
 in the context of discretely decomposable restrictions.
As we will formulate precisely in this article, minimal representations are the most likely to
 be discretely decomposable.
Moreover, we can observe from some examples that the branching laws
 of minimal representations become fairly simple.
The Segal--Shale--Weil representation is a minimal representation
 of the metaplectic group and its restrictions to subgroups
 have been particularly well-studied in the connection of
 Howe's dual pair correspondence.  
The restrictions of the minimal representation of indefinite orthogonal
 group $O(p,q)$ to symmetric subgroups $O(p',q')\times O(p'',q'')$
 were studied in \cite{KO}.  
If one of $p'$, $q'$, $p''$ or $q''$ is zero,
 the minimal representation is discretely decomposable
 and is a direct sum of Zuckerman derived functor modules.
Moreover, the Parseval--Plancherel type theorem explains the unitary
structure for the minimal representations in terms of
those for Zuckerman derived functor modules.
The classification theorem of this article would suggest a further study
of both algebraic and analytic nature of branching laws 
for minimal representations in more general cases.

Here is a brief outline of the article.
Loosely speaking,
 the \lq{smaller}\rq\ $X$ is, 
the more likely the restriction $X|_{(\frak{h},H\cap K)}$
 is discretely decomposable.  
We formulate this feature by using associated varieties
 of $(\frak{g},K)$-modules.  
For this purpose,
 some basic properties
 of associated varieties
 are given in Section~\ref{sec:ass}.
We review a general necessary condition (Fact~\ref{nec})
 and a general sufficient condition (Fact~\ref{suff})
 for the discrete decomposability of restrictions 
 in Section~\ref{sec:dd}.
We apply them to the case that $X$ attains the minimum
 of the Gelfand--Kirillov
 dimension,
 and obtain a simple criterion
 for discrete decomposability
 in this case (Theorem~\ref{geldd}).
The main theorem (classification) is given in Section~\ref{sec:class}.
Concerning the tensor product of two irreducible representations,
 Problems~\ref{prob:a} and \ref{prob:b} are solved completely
 in Section~\ref{sec:tensor}.

\vskip 1pc
{\bf{$\langle$ Acknowledgement $\rangle$}}
\enspace
The authors would like to thank the American Institute of Mathematics
for supporting the workshop
 \lq\lq{Branching Problems for Unitary Representations\rq\rq}\
(July, 2011) and the Max Planck Institute for Mathematics
for their hospitality, where a part of this project was carried out.

\section{Preliminaries}
Let $G$ be a connected real reductive Lie group
 with Lie algebra ${\mathfrak {g}}$.  
We fix a Cartan decomposition 
${\mathfrak {g}} = {\mathfrak {k}} + {\mathfrak {p}}$, 
write 
$
     {\mathfrak {g}}_{\mathbb{C}}
 = {\mathfrak {k}}_{\mathbb{C}} + {\mathfrak {p}}_{\mathbb{C}}
$
 for its complexification,
 $
     {\mathfrak {g}}_{\mathbb{C}}^{\ast}
 = {\mathfrak {k}}_{\mathbb{C}}^{\ast} 
   + {\mathfrak {p}}_{\mathbb{C}}^{\ast}
$
 for the dual space, 
 and $K$ for the connected subgroup
 of $G$ with Lie algebra ${\mathfrak {k}}$.
We denote by $K_\bb{C}$ the subgroup of the inner automorphism group
 $\op{Int} \frak{g}_\bb{C}$ of $\frak{g}_\bb{C}$
 generated by $\op{exp}(\op{ad} (\frak{k}_\bb{C}))$.
Notice $K$ is not necessarily 
 a subgroup of $K_\bb{C}$, but there is a natural morphism
 $K\to K_\bb{C}$.  
The adjoint group $K_\bb{C}$ acts canonically
 on $\frak{p}_\bb{C}$
 and on the dual space $\frak{p}_\bb{C}^*$.
We take a Cartan subalgebra $\frak{t}$ of $\frak{k}$ and 
 choose a positive system $\Delta^+(\frak{k}_\bb{C},\frak{t}_\bb{C})$.
Let $B_K$ be the Borel subgroup of $K_\bb{C}$ corresponding
 to the positive roots.

Let ${\cal N}(\frak{g}^*_\bb{C})$ be the
 nilpotent variety of the dual space $\frak{g}^*_\bb{C}$, 
 and set
 ${\cal N}(\frak{p}^*_\bb{C})
:={\cal N}(\frak{g}^*_\bb{C})\cap \frak{p}^*_\bb{C}$.
By Kostant--Rallis \cite{KR}, 
 there are only finitely many $K_\bb{C}$-orbits
 in ${\cal N}(\frak{p}_\bb{C}^*)$ and each orbit is stable under
 multiplication by $\bb{C}^{\times}$.
Write the orbit decomposition as
 ${\cal N}({\frak{p}}_\bb{C}^*)
=\{0\}\sqcup \bb{O}_1\sqcup \cdots \sqcup \bb{O}_n$.
We say that $\bb{O}_i$ is {\it minimal} if it is minimal among 
$\bb{O}_1, \dots, \bb{O}_n$ with respect to the closure relation, 
 or equivalently, if the closure of $\bb{O}_i$
 is $\bb{O}_i\sqcup\{0\}$.

A simple Lie group $G$ 
 or its Lie algebra $\frak{g}$ is said to be of {\it Hermitian type}
 if the associated Riemannian symmetric space $G/K$
 is a Hermitian symmetric space,
 or equivalently, if the center $\frak{z}_K$ of $\frak{k}$ is one-dimensional.
If $G$ is of Hermitian type, the $K_\bb{C}$-module
 $\frak{p}_\bb{C}$ decomposes into two irreducible
 $K_\bb{C}$-modules:
 $\frak{p}_\bb{C}=\frak{p}_+ + \frak{p}_-$, 
 and $\frak{p}_-$ can be identified
 with the holomorphic tangent space at the base point in $G/K$.
The decomposition $\frak{p}^*_\bb{C}=\frak{p}^*_+ + \frak{p}^*_-$ 
 for the dual space is again an irreducible decomposition 
 as $K_\bb{C}$-modules.
The following notation will be used throughout the paper.

\begin{de}[highest non-compact root]
{\rm 
Let $G$ be a non-compact connected simple Lie group.
\label{beta}
\begin{enumerate}
\item[(1)]
If $G$ is not of Hermitian type, then $K_\bb{C}$ acts irreducibly
 on $\frak{p}^*_\bb{C}$ and we denote by 
 $\beta\in\sqrt{-1}\frak{t}^*$ the highest weight
 in $\frak{p}_\bb{C}^*$.
\item[(2)]
If $G$ is of Hermitian type, 
we label the $K_{\mathbb{C}}$-irreducible decomposition 
as $\frak{p}_\bb{C}=\frak{p}_+ + \frak{p}_-$
 and denote by $\beta\in\sqrt{-1}\frak{t}^*$ the highest weight
 in $\frak{p}^*_+$.
\end{enumerate}
}
\end{de}
Since $\frak{p}_\bb{C}^*$ is isomorphic to $\frak{p}_\bb{C}$
 as a $K_{\mathbb{C}}$-module
 by the Killing form,
 $-\beta$ is also a weight in $\frak{p}_\bb{C}^*$.  
For $\frak{g}$ of Hermitian type,
 $\frak{p}_-^*$ is dual to $\frak{p}_+^*$
 and therefore $-\beta$ occurs in $\frak{p}_-^*$.  
In either case, 
 the weight spaces $\frak{p}^*_\beta$ and $\frak{p}^*_{-\beta}$
 in $\frak{p}_\bb{C}^*$
 are one-dimensional.
Notice that $\beta$ depends
 on the labeling $\frak{p}_{\pm}$
 in Definition~\ref{beta} (2).  

Here is a description of minimal $K_\bb{C}$-orbits 
 in ${\mathcal{N}}(\frak{p}_\bb{C}^*)$.  
\begin{prop}
\label{minorbit}
Let $G$ be a non-compact connected simple Lie group.
\begin{enumerate}
\item[{\rm (1)}] 
If $G$ is not of Hermitian type, 
then there is a unique minimal $K_\bb{C}$-orbit in
 ${\cal N}(\frak{p}^*_\bb{C})$, which is given by
 $K_\bb{C}\cdot(\frak{p}^*_{\beta}\setminus\{0\})$.
\item[{\rm (2)}] 
If $G$ is of Hermitian type, 
then there are two minimal $K_\bb{C}$-orbits in
 ${\cal N}(\frak{p}^*_\bb{C})$, which are given by
 $K_\bb{C}\cdot (\frak{p}^*_{\beta}\setminus\{0\})$ and 
 $K_\bb{C}\cdot (\frak{p}^*_{-\beta}\setminus\{0\})$.
They have the same dimension.
\end{enumerate}
\end{prop}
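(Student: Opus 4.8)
The plan is to describe the nilpotent variety $\mathcal{N}(\mathfrak{p}_{\mathbb{C}}^*)$ concretely using the Kostant--Rallis theory, and to identify the minimal orbits by analyzing highest weight vectors of the Borel subgroup $B_K$. The key observation is that every $K_{\mathbb{C}}$-orbit closure contains a $B_K$-fixed point (up to scaling), so the minimal orbits are precisely those generated by nilpotent weight vectors whose weights are extremal in a suitable sense. I would first recall that $\mathfrak{p}_{\mathbb{C}}^* \cong \mathfrak{p}_{\mathbb{C}}$ as a $K_{\mathbb{C}}$-module, so I can work with weight vectors for the action of the Cartan $\mathfrak{t}_{\mathbb{C}}$ on $\mathfrak{p}_{\mathbb{C}}^*$.

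First I would treat the non-Hermitian case (1). Here $K_{\mathbb{C}}$ acts irreducibly on $\mathfrak{p}_{\mathbb{C}}^*$ with highest weight $\beta$. A nonzero highest weight vector $v_\beta \in \mathfrak{p}_\beta^*$ is fixed by the unipotent radical of $B_K$, so its $K_{\mathbb{C}}$-orbit is the minimal nonzero orbit in the sense that its closure is contained in every other orbit closure; this is the standard fact that the highest weight orbit is the unique closed $K_{\mathbb{C}}$-orbit in $\mathbb{P}(\mathfrak{p}_{\mathbb{C}}^*)$, hence minimal as a cone. I would verify that $v_\beta$ lies in $\mathcal{N}(\mathfrak{p}_{\mathbb{C}}^*)$, i.e.\ is nilpotent: since $\beta$ is a root, $v_\beta$ corresponds under the Killing form to a root vector, and a single root vector is nilpotent in $\mathfrak{g}_{\mathbb{C}}$. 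Irreducibility of $\mathfrak{p}_{\mathbb{C}}^*$ forces uniqueness, since any minimal orbit must contain a highest weight line in its closure and there is only one such line up to the $B_K$-action.

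For the Hermitian case (2), the decomposition $\mathfrak{p}_{\mathbb{C}}^* = \mathfrak{p}_+^* \oplus \mathfrak{p}_-^*$ into two irreducible $K_{\mathbb{C}}$-modules gives two highest-weight lines, namely $\mathfrak{p}_\beta^*$ in $\mathfrak{p}_+^*$ and $\mathfrak{p}_{-\beta}^*$ in $\mathfrak{p}_-^*$ (the latter being the highest weight of $\mathfrak{p}_-^*$, which as noted in the text carries weight $-\beta$ as its extremal weight). Each yields a minimal orbit by the same highest-weight argument applied within each irreducible summand. To see these are the only two, I would use that any $K_{\mathbb{C}}$-stable cone's minimal orbit arises from a $B_K$-highest weight vector lying in one of the two irreducible constituents. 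Finally, the equality of dimensions follows because the Cartan involution $\theta$ (or complex conjugation interchanging $\mathfrak{p}_\pm$) induces a $K_{\mathbb{C}}$-equivariant isomorphism swapping the two orbits, so $\dim K_{\mathbb{C}}\cdot(\mathfrak{p}_\beta^*\setminus\{0\}) = \dim K_{\mathbb{C}}\cdot(\mathfrak{p}_{-\beta}^*\setminus\{0\})$.

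\emph{The main obstacle} I anticipate is proving \textbf{uniqueness/completeness}: showing that these highest-weight orbits are genuinely the \emph{only} minimal ones, rather than merely minimal among orbits meeting a fixed highest-weight line. The subtle point is that a priori there could be a nonzero nilpotent orbit whose closure does not contain $\mathfrak{p}_\beta^*$ but whose dimension is incomparably small. To rule this out I would invoke the structure of $\mathcal{N}(\mathfrak{p}_{\mathbb{C}}^*)$ as a union of finitely many $K_{\mathbb{C}}$-orbits together with the fact, following Kostant--Rallis, that the $K_{\mathbb{C}}$-orbit structure on $\mathcal{N}(\mathfrak{p}_{\mathbb{C}}^*)$ is governed by $\mathfrak{sl}_2$-triples adapted to the symmetric pair; the highest-weight (equivalently, minimal) nilpotent orbit corresponds to the long root in a maximal split torus and is the unique minimal one in the non-Hermitian (irreducible) case. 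The Hermitian case splits this into exactly two by the $\mathbb{Z}$-grading $\mathfrak{g}_{\mathbb{C}} = \mathfrak{p}_- \oplus \mathfrak{k}_{\mathbb{C}} \oplus \mathfrak{p}_+$ coming from the one-dimensional center $\mathfrak{z}_K$.
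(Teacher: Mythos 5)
Your proposal is correct and rests on the same underlying idea as the paper's proof: every non-zero $K_{\mathbb{C}}$-stable cone in ${\cal N}(\frak{p}^*_{\bb C})$ must contain an extremal weight line in its closure, and complex conjugation with respect to $\frak{g}$ accounts for the equality of dimensions in the Hermitian case. The difference is in how the key containment is established. You outsource it to standard projective geometry: the projectivized orbit closure is a complete $K_{\bb C}$-variety, hence contains a closed orbit, and by the Borel fixed point theorem the closed orbits pass through $B_K$-stable lines, of which there are exactly one (resp.\ two) by irreducibility (resp.\ by the fact that $\frak{p}_+^*$ and $\frak{p}_-^*$ have distinct $B_K$-characters on their extremal lines, thanks to the central element $z$). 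The paper instead proves this containment by hand: first translate $x$ by $B_K$ to arrange $x_\beta\neq 0$, then take an explicit one-parameter limit $e^{-s\beta(a)}\exp(\op{ad}(sa))x\to x_\beta$ using $\bb{C}^\times$-stability of the orbits; in the Hermitian case a preliminary limit along the center separates the $\frak{p}^*_\pm$-components. Both routes are complete; the paper's is self-contained and elementary, yours is shorter but leans on the closed-orbit theory, and it makes your proposed fallback via Kostant--Rallis $\frak{sl}_2$-triples unnecessary --- the fixed-point argument already rules out ``incomparably small'' orbits. Two small corrections: $-\beta$ is the \emph{lowest} (not highest) weight of $\frak{p}_-^*$ --- harmless, since any extremal weight line generates the same orbit; and the Cartan involution $\theta$ does \emph{not} interchange $\frak{p}_\pm$ (it is $-1$ on all of $\frak{p}_{\bb C}$), so for the dimension count you must use your parenthetical alternative, complex conjugation with respect to $\frak{g}$, which is exactly what the paper does.
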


\begin{proof}
(1)
Suppose that $G$ is not of Hermitian type.
Then $\frak{p}_\bb{C}^*$ is an irreducible $K_\bb{C}$-module
 with highest weight $\beta$.
Let $Z$ be a non-zero $K_\bb{C}$-stable subset of
 ${\cal N}(\frak{p}_\bb{C}^*)$.
To prove (1), it is enough to show that the closure $\overline{Z}$
 of $Z$ contains $\frak{p}^*_{\beta}$.
Take a non-zero element $x\in Z$ and
 write $x$ as the sum of $\frak{t}$-weight vectors in $\frak{p}^*_\bb{C}$:
 $x=\sum_{\alpha\in\Delta(\frak{p}^*_\bb{C},\frak{t}_\bb{C})} x_\alpha$.
Since any non-zero $B_K$-submodule of $\frak{p}^*_\bb{C}$
 contains $\frak{p}^*_\beta$, 
 we may assume that $x_\beta\neq 0$ by replacing
 $x$ by $bx$ $(b\in B_K)$.
We take an element $a\in\sqrt{-1}{\frak{t}}$ that is regular dominant
 with respect to
 $\Delta^+(\frak{k}_\bb{C},\frak{t}_\bb{C})$.
Then $\alpha(a)\in\bb{R}$ for any 
 $\alpha\in\Delta(\frak{p}^*_\bb{C},\frak{t}_\bb{C})$ and
 $\beta(a)>\alpha(a)$ if
 $\alpha\in\Delta(\frak{p}^*_\bb{C},\frak{t}_\bb{C})\setminus\{\beta\}$.
Define $x(s):=\exp(\op{ad}(sa))(x)\in\frak{p}^*_\bb{C}$
 for $s\in\bb{R}$.
Then 
\begin{align*}
{e^{-s\beta(a)}}x(s)
= x_\beta+
 \sum_{\alpha\in\Delta(\frak{p}^*_\bb{C},\frak{t}_\bb{C})\setminus \{\beta\}}
 {e^{s\alpha(a)-s\beta(a)}}x_\alpha
\end{align*}
and hence 
\[\lim_{s\to \infty} e^{-s\beta(a)}x(s)= x_\beta.\]
Since any $K_\bb{C}$-stable subset of
 ${\cal N}(\frak{p}^*_\bb{C})$ is
 stable under the multiplication by $\bb{C}^{\times}$,
 the vector $e^{-s\beta(a)}x(s)$ is contained in $Z$ for all
 $s\in\bb{R}$.
As a consequence, $\overline{Z}\ni x_\beta$ 
 and therefore $\overline{Z}\supset\frak{p}^*_\beta$.

(2)
Suppose that $G$ is of Hermitian type.
Then $\frak{p}^*_+$ is an irreducible $K_\bb{C}$-module
 with highest weight $\beta$.
Since $\frak{p}^*_-$ is its contragredient representation, 
 it is an irreducible $K_\bb{C}$-module 
 with lowest weight $-\beta$.
Let $Z$ be a non-zero $K_\bb{C}$-stable subset of
 ${\cal N}(\frak{p}_\bb{C}^*)$.
It is enough to show that
 $\overline{Z}\supset \frak{p}_{\beta}^*$ or
 $\overline{Z}\supset \frak{p}_{-\beta}^*$.
Take a non-zero element $x=x_++x_-\in Z$
 where $x_+\in\frak{p}^*_+$ and $x_-\in\frak{p}^*_-$.
We assume that $x_+\neq 0$.
Let $\frak{z}_K$ be the center of $\frak{k}$ and
 take an element $z\in\sqrt{-1}\frak{z}_K$
 such that
 $\op{ad}(z)=1$ on $\frak{p}^*_+$ and $\op{ad}(z)=-1$ on $\frak{p}^*_-$.
Then by an argument similar to the case (1) we have
\[\lim_{s\to \infty} {e^{-s}}\exp(\op{ad}(sz))x
 = x_+\]
and therefore $\overline{Z}\ni x_+$.
By using a similar argument again, we see that the closure of
 $K_\bb{C}\cdot x$ contains $x_\beta$ and hence
 $\overline{Z}\supset\frak{p}^*_\beta$.
If $x_+=0$, then $x_-\neq 0$ and we can prove
 similarly that $\overline{Z}\supset\frak{p}^*_{-\beta}$.

We can switch the two orbits
 $K_\bb{C}\cdot (\frak{p}^*_{\beta}\setminus\{0\})$ and 
 $K_\bb{C}\cdot (\frak{p}^*_{-\beta}\setminus\{0\})$
 by taking the complex conjugates with respect to the real form
 $\frak{g}$.
In particular they have the same dimension. 
\end{proof}

Proposition~\ref{minorbit} justifies
 the following notation:
\begin{alignat*}{2}
\bb{O}_{{\rm min}}:=&K_\bb{C}\cdot(\frak{p}^*_{\beta}\setminus\{0\}) \quad
&&\text{($\frak{g}$: not of Hermitian type),}
\\
\bb{O}_{{\rm min}, \pm}:=&K_\bb{C}\cdot(\frak{p}^*_{\pm\beta}\setminus\{0\}) 
\qquad&&\text{($\frak{g}$: Hermitian type)}.
\end{alignat*}
Their closures in $\frak{p}_\bb{C}^*$ are given by
\begin{align*}
\overline{\bb{O}_{\rm min}}= K_\bb{C}\cdot \frak{p}_{\beta}^*,\qquad
\overline{\bb{O}_{{\rm min}, \pm}}=K_\bb{C}\cdot\frak{p}_{\pm\beta}^*.
\end{align*}

They are related to the minimal nilpotent coadjoint orbit in the following way.
Suppose that $\frak{g}_\bb{C}$ is a complex simple Lie algebra.
This is equivalent to assuming that $\frak{g}$ is a simple real Lie algebra
 without complex structure.
Then there exists a unique non-zero minimal nilpotent
 $(\op{Int} \frak{g}_\bb{C})$-orbit
 in $\frak{g}_\bb{C}^*$, which we denote by
 $\bb{O}_{{\rm min}, \bb{C}}$. 
\begin{lem}
\label{orbitcap}
In the setting above, exactly one of the following    cases occurs.
\begin{enumerate}
\item[$(1)$]
$\bb{O}_{{\rm min}, \bb{C}}\cap \frak{p}_\bb{C}^*=\emptyset$.
\item[$(2)$]
$\frak{g}$ is not of Hermitian type and
 $\bb{O}_{{\rm min}, \bb{C}}\cap \frak{p}_\bb{C}^*=\bb{O}_{\rm min}$.
\item[$(3)$]
$\frak{g}$ is of Hermitian type and
$\bb{O}_{{\rm min}, \bb{C}}\cap \frak{p}_\bb{C}^*
 =\bb{O}_{{\rm min}, +}\cup\bb{O}_{{\rm min}, -}$.
\end{enumerate}
\end{lem}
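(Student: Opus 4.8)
The plan is to set $S:=\bb{O}_{{\rm min}, \bb{C}}\cap \frak{p}_\bb{C}^*$ and to study it as a $K_\bb{C}$-stable subset of ${\cal N}(\frak{p}_\bb{C}^*)$, to which Proposition~\ref{minorbit} applies. Since the three alternatives are mutually exclusive the moment one knows that $S$ is either empty or equal to the minimal orbit(s), the entire content reduces to the assertion: \emph{if $S\neq\emptyset$, then $S=\bb{O}_{\rm min}$ in the non-Hermitian case and $S=\bb{O}_{{\rm min}, +}\cup\bb{O}_{{\rm min}, -}$ in the Hermitian case}. First I would record that $\bb{O}_{{\rm min}, \bb{C}}$ is stable under $\op{Int}\frak{g}_\bb{C}$ and that $K_\bb{C}\subset\op{Int}\frak{g}_\bb{C}$ preserves $\frak{p}_\bb{C}^*$, so $S$ is a union of $K_\bb{C}$-orbits lying in ${\cal N}(\frak{p}_\bb{C}^*)$; being disjoint from $\{0\}$, it is nonzero whenever it is nonempty.

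Assuming $S\neq\emptyset$, I would first get the minimal orbit(s) \emph{into} $S$ by a soft closure argument. Taking closures in $\frak{p}_\bb{C}^*$ and using $\overline{\bb{O}_{{\rm min}, \bb{C}}}=\bb{O}_{{\rm min}, \bb{C}}\cup\{0\}$ (minimality of the nilpotent orbit), one has $\overline{S}\subset\overline{\bb{O}_{{\rm min}, \bb{C}}}\cap\frak{p}_\bb{C}^*=S\cup\{0\}$. On the other hand Proposition~\ref{minorbit}(1) forces $\frak{p}^*_\beta\subset\overline{S}$ in the non-Hermitian case; since $\overline{S}$ is closed and $K_\bb{C}$-stable, $\overline{\bb{O}_{\rm min}}=K_\bb{C}\cdot\frak{p}^*_\beta\subset\overline{S}\subset S\cup\{0\}$, hence $\bb{O}_{\rm min}\subset S$. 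In the Hermitian case Proposition~\ref{minorbit}(2) gives $\frak{p}^*_\beta\subset\overline S$ or $\frak{p}^*_{-\beta}\subset\overline S$, and here I would invoke the complex conjugation $\tau$ with respect to the real form $\frak{g}$ used at the end of the proof of Proposition~\ref{minorbit}: it preserves $\frak{p}_\bb{C}^*$, exchanges $\frak{p}^*_+$ and $\frak{p}^*_-$, and maps the minimal nilpotent orbit to a minimal nilpotent orbit, which is unique, so $\tau(\bb{O}_{{\rm min}, \bb{C}})=\bb{O}_{{\rm min}, \bb{C}}$ and $\tau(S)=S$. Consequently the two possibilities are interchanged by $\tau$, and both $\bb{O}_{{\rm min}, +}$ and $\bb{O}_{{\rm min}, -}$ lie in $S$.

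For the reverse inclusion I would rule out larger orbits by a dimension count. By Kostant--Rallis \cite{KR}, every $(\op{Int}\frak{g}_\bb{C})$-orbit $\bb{O}$ meeting $\frak{p}_\bb{C}^*$ satisfies $\dim(\bb{O}\cap\frak{p}_\bb{C}^*)=\tfrac12\dim\bb{O}$, so all $K_\bb{C}$-orbits contained in a fixed complex orbit are equidimensional. Applied to $\bb{O}=\bb{O}_{{\rm min}, \bb{C}}$, this shows that every $K_\bb{C}$-orbit inside $S$ has dimension $\tfrac12\dim\bb{O}_{{\rm min}, \bb{C}}$, in particular the same dimension as $\bb{O}_{\rm min}$ (resp.\ $\bb{O}_{{\rm min}, \pm}$). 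If some $K_\bb{C}$-orbit $\bb{O}_j\subset S$ were distinct from the minimal one(s), then Proposition~\ref{minorbit} would place a minimal orbit inside $\overline{\bb{O}_j}\setminus\bb{O}_j$, whose dimension is strictly smaller than $\dim\bb{O}_j$; this contradicts the equality of dimensions just established. Hence $S$ consists only of minimal $K_\bb{C}$-orbits, which yields cases (2) and (3); the empty alternative is case (1), and the three cases are visibly mutually exclusive.

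The step I expect to be the crux is the reverse inclusion. Exhibiting the minimal orbit inside $S$ is essentially formal, relying only on closures together with Proposition~\ref{minorbit}; but excluding \emph{larger} $K_\bb{C}$-orbits genuinely requires combining the minimality of $\bb{O}_{{\rm min}, \bb{C}}$ among nonzero nilpotent orbits with the equidimensionality of $K_\bb{C}$-orbits lying in a single complex orbit. The Kostant--Rallis dimension formula $\dim(\bb{O}\cap\frak{p}_\bb{C}^*)=\tfrac12\dim\bb{O}$ is precisely the input that turns the otherwise purely order-theoretic comparison of orbit closures into a contradiction on dimensions.
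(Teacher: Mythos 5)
Your proof is correct and follows essentially the same route as the paper: the paper's proof consists of citing the Kostant--Rallis equidimensionality fact (each $K_\bb{C}$-orbit in $\bb{O}_\bb{C}\cap\frak{p}_\bb{C}^*$ has dimension $\tfrac12\dim\bb{O}_\bb{C}$), which is exactly the crux of your dimension-count argument. The remaining steps you supply --- the closure argument via Proposition~\ref{minorbit} to place the minimal orbit(s) inside the intersection, and the complex-conjugation symmetry to capture both $\bb{O}_{{\rm min},+}$ and $\bb{O}_{{\rm min},-}$ in the Hermitian case --- are precisely the details the paper leaves implicit (the conjugation trick already appears at the end of the paper's proof of Proposition~\ref{minorbit}), and you execute them correctly.
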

This follows from the fact \cite{KR}:
for any $(\op{Int} \frak{g}_\bb{C})$-orbit $\bb{O}_\bb{C}$
 in the nilpotent variety ${\mathcal{N}}(\frak{g}_\bb{C}^*)$, 
 the intersection $\bb{O}_\bb{C}\cap\frak{p}_\bb{C}^*$
 is either empty or 
 the union of a finite number of equi-dimensional $K_\bb{C}$-orbits
 $\bb{O}_1, \dots, \bb{O}_m$, 
 and the dimension of $\bb{O}_j$ ($1 \le j \le m$) is equal
 to half the dimension of $\bb{O}_\bb{C}$.
We note
\begin{alignat*}{2}
\bb{O}_{{\rm min}, \bb{C}}
&\ne (\operatorname{Int}\frak{g}_{\bb {C}})\cdot\bb{O}_{{\rm min}}
&&\text{in Case (1),}
\\
\bb{O}_{{\rm min}, \bb{C}}
&= (\operatorname{Int} \frak{g}_{\bb {C}})\cdot\bb{O}_{{\rm min}}
&&\text{in Case (2),}
\\
\bb{O}_{{\rm min}, \bb{C}}
&= (\operatorname{Int} \frak{g}_{\bb {C}})\cdot\bb{O}_{{\rm min, +}}
 = (\operatorname{Int} \frak{g}_{\bb {C}})\cdot\bb{O}_{{\rm min, -}}
\qquad
&&\text{in Case (3)}.
\end{alignat*}
In Corollary~\ref{complex}, 
we provide six equivalent conditions
 to Case (1) 
 including a classification of such $\frak {g}$.  
 (Notice that
 the pair $(\frak{g}_\bb{C}, \frak{g})$ in Lemma~\ref{orbitcap}
 corresponds to $(\frak{g}, \frak{g}^\sigma)$ in
 the notation there.)

We define 
\[m(\frak{g}):=
\begin{cases}
 \op{dim}_\bb{C} {\bb{O}_{\rm min}} &
 \text{($\frak{g}$: not of Hermitian type),}\\
 \op{dim}_\bb{C} \bb{O}_{{\rm min},\pm} &
 \text{($\frak{g}$: of Hermitian type).}
\end{cases}
\]

For the reader's convenience,
 we list explicit values of $m(\frak{g})$.
By the Kostant--Sekiguchi correspondence,
 $m(\frak{g})$ coincides with half
 the dimension
 of the (real) minimal nilpotent coadjoint orbit(s)
 in $\frak{g}^{\ast}$.  
In Case (1) this is given in \cite{Ok}
 as follows:

\medskip
\begin{tabular}{c|ccccc}
$\frak{g}$ & $\frak{su}^*(2n)$ & $\frak{so}(n-1,1)$ & $\frak{sp}(m,n)$
 &  $\frak{f}_{4(-20)}$ & $\frak{e}_{6(-26)}$ \\ \hline
$m(\frak{g})$ & $4n-4$ & $n-2$ & $2(m+n)-1$ & $11$ & $16$
\end{tabular}
\medskip

In Case (2) and Case (3), 
 $m(\frak{g})$ is determined
 only by the complexified Lie algebra $\frak {g}_{\bb{C}}$:
 for $\frak{g}$ without complex structure we have
 $m(\frak{g})=\frac{1}{2}\op{dim}_\bb{C} \bb{O}_{{\rm min}, \bb{C}}$.
The dimension of the (complex) minimal nilpotent orbit
  $\op{dim}_\bb{C} \bb{O}_{{\rm min}, \bb{C}}$ is well-known.
Thus we have:

\medskip
\begin{tabular}{c|ccccccccc}
$\frak{g}_\bb{C}$ & $A_n$ & $B_n (n\geq 2)$
 & $C_n$ & $D_n$ & $\frak{g}_2^\bb{C}$ 
 & $\frak{f}_4^\bb{C}$ & $\frak{e}_6^\bb{C}$ & $\frak{e}_7^\bb{C}$
 & $\frak{e}_8^\bb{C}$ \\ \hline
$m(\frak{g})$ & $n$ & $2n-2$ & $n$ & $2n-3$ & $3$ & $8$ & $11$ & $17$ & $29$
\end{tabular}
\medskip

If $\frak{g}$ is a complex Lie algebra, $m(\frak{g})$ is twice the number
 above (e.g. $m(\frak{e}_{8}^\bb{C})=58$).

\section{Associated Varieties of $\frak {g}$-modules}
\label{sec:ass}

The associated varieties ${\mathcal{V}}_{\frak{g}}(X)$
 are a coarse approximation of 
 ${\mathfrak {g}}$-modules $X$, 
 which we brought in \cite{kob98ii}
 into the study of discretely decomposable restrictions
 of Harish-Chandra modules.
In this paper,
 we further develop its idea.  
For this, we collect some important properties of
 associated varieties that will be used in the later sections.

Let $\{U_j(\frak{g}_\bb{C})\}_{j \in {\mathbb{N}}}$
 be the standard increasing filtration
 of the universal enveloping algebra
 $U(\frak{g}_\bb{C})$.
Suppose $X$ is a finitely generated $\frak{g}$-module $X$.
A filtration $\{X_i\}_{i\in \bb{N}}$ is called a {\it good filtration} if
 it satisfies the following conditions.
\begin{itemize}
\item $\bigcup_{i\in \bb{N}} X_i=X$. 
\item $X_i$ is finite-dimensional for any $i\in \bb{N}$.
\item $U_j(\frak{g}_\bb{C})X_i\subset X_{i+j}$ for any $i,j\in\bb{N}$.
\item There exists $n$ such that $U_j(\frak{g}_\bb{C})X_i=X_{i+j}$
 for any $i\geq n$ and $j\in \bb{N}$.
\end{itemize}

The graded algebra
 $\op{gr} U(\frak{g}_\bb{C}):=
 \bigoplus_{j\in\bb{N}}U_j(\frak{g}_\bb{C})/U_{j-1}(\frak{g}_\bb{C})$
is isomorphic to the symmetric algebra $S(\frak{g}_\bb{C})$
 by the Poincar\'{e}--Birkhoff--Witt theorem and we 
 regard the graded module $\op{gr}X:=\bigoplus_{i\in\bb{N}} X_i/X_{i-1}$
 as an $S(\frak{g}_\bb{C})$-module.
Define
\begin{align*}
\op{Ann}_{S(\frak{g}_\bb{C})}(\op{gr}X):=
\{f\in S(\frak{g}_\bb{C}) : fv=0 \text{\ for any\ } v\in\op{gr}X\},
\\
{\cal V}_{\frak{g}}(X):=
\{x\in \frak{g}_\bb{C}^* : f(x)=0 \text{\ for any\ }
 f\in\op{Ann}_{S(\frak{g}_\bb{C})}(\op{gr}X)\}.
\end{align*}
Then ${\cal V}_{\frak{g}}(X)$ does not depend on
 the choice of good filtration and is called
 the {\it associated variety} of $X$.

The following basic properties on the associated variety are well-known.
\begin{lem}
\label{lem:ass}
Let $X$ be a finitely generated $\frak{g}$-module.
\begin{enumerate}
\item[$(1)$]
If $X$ is of finite length, then
 ${\cal V}_{\frak{g}}(X)\subset{\cal N}(\frak{g}_\bb{C}^*)$.
\item[$(2)$]
 ${\cal V}_{\frak{g}}(X)=0$ if and only if $X$ is finite-dimensional.
\item[$(3)$]
Let $\frak{h}$ be a Lie subalgebra of $\frak{g}_\bb{C}$.
Then ${\cal V}_{\frak{g}}(X)\subset \frak{h}^\perp$
 if $\frak{h}$ acts locally finitely on $X$,
 where 
 $\frak{h}^\perp:=
 \{x \in {\mathfrak {g}}_{\mathbb{C}}^{\ast}:
 x|_{\frak{h}}=0\}$.
\end{enumerate}
\end{lem}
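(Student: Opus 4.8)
The plan is to prove the three statements of Lemma~\ref{lem:ass} by working with the graded module $\op{gr} X$ over the symmetric algebra $S(\frak{g}_\bb{C}) \cong \op{gr} U(\frak{g}_\bb{C})$, using the fact that $\cal{V}_\frak{g}(X)$ is the zero set of $\op{Ann}_{S(\frak{g}_\bb{C})}(\op{gr} X)$ and is independent of the chosen good filtration.

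For part (1), I would first recall that if $X$ has finite length, then its annihilator in $U(\frak{g}_\bb{C})$ contains the augmentation ideal of the center $Z(\frak{g}_\bb{C})$ up to finite codimension; more precisely, each composition factor has an infinitesimal character, so a large enough power of the maximal ideal of $Z(\frak{g}_\bb{C})$ corresponding to those characters annihilates $X$. Passing to the associated graded, the images of these central elements are the $(\op{Int} \frak{g}_\bb{C})$-invariant polynomials on $\frak{g}_\bb{C}^*$ with no constant term, and their common zero set is exactly the nilpotent cone $\cal{N}(\frak{g}_\bb{C}^*)$ by the classical Kostant description of the nilpotent variety as the zero locus of the invariants. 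Hence these invariants lie in $\op{Ann}_{S(\frak{g}_\bb{C})}(\op{gr} X)$, which forces $\cal{V}_\frak{g}(X) \subset \cal{N}(\frak{g}_\bb{C}^*)$.

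For part (2), the backward direction is immediate: if $X$ is finite-dimensional, one may take a good filtration with $X_i = X$ for $i$ large, so $\op{gr} X$ is a finite-dimensional $S(\frak{g}_\bb{C})$-module supported at $0$, giving $\cal{V}_\frak{g}(X) = \{0\}$. For the forward direction, I would use the standard fact that the Gelfand--Kirillov dimension of $X$ equals $\dim_\bb{C} \cal{V}_\frak{g}(X)$; if the associated variety is $\{0\}$, then $\op{gr} X$ is supported at the origin, so (being finitely generated over $S(\frak{g}_\bb{C})$) it is finite-dimensional, and therefore $X$ itself is finite-dimensional.

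For part (3), the key observation is that if $\frak{h}$ acts locally finitely on $X$, then I can choose a good filtration $\{X_i\}$ that is $\frak{h}$-stable, so that each element $Y \in \frak{h}$, viewed in $\op{gr}_1 U(\frak{g}_\bb{C}) = \frak{g}_\bb{C} \subset S(\frak{g}_\bb{C})$, annihilates $\op{gr} X$: indeed, since $\frak{h} \cdot X_i$ stays within a finite-dimensional space, the symbol of $Y$ shifts degree trivially on the graded pieces and its action vanishes. Thus $\frak{h} \subset \op{Ann}_{S(\frak{g}_\bb{C})}(\op{gr} X)$ as linear functions, and every point of $\cal{V}_\frak{g}(X)$ must vanish on $\frak{h}$, i.e. $\cal{V}_\frak{g}(X) \subset \frak{h}^\perp$. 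The main obstacle I anticipate is the careful justification in part (3) that local finiteness of the $\frak{h}$-action genuinely lets one build a good filtration compatible with $\frak{h}$ so that the degree-one symbols of elements of $\frak{h}$ act by zero on $\op{gr} X$; this requires checking that the filtration can be arranged so that the $\frak{h}$-action does not raise the filtration degree, which is precisely where the hypothesis is used.
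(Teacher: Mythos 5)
First, an important point of comparison: the paper does not actually prove Lemma~\ref{lem:ass}; it is quoted as a collection of well-known facts with no argument given. So your proposal is to be judged on its own merits, and on that score it is essentially the standard argument and is correct in outline, with two places that need tightening. In part (1), since the composition factors may have \emph{different} infinitesimal characters $\chi_1,\dots,\chi_n$ (their existence already uses Dixmier's version of Schur's lemma), what annihilates $X$ is the product ideal $(\ker\chi_1)\cdots(\ker\chi_n)\subset Z(\frak{g}_\bb{C})$, not a power of a single maximal ideal as you wrote; the argument survives intact: for each homogeneous invariant $p\in S(\frak{g}_\bb{C})^{\frak{g}}$ of positive degree, choose a central element $z$ with symbol $p$ (the symbol map $Z(\frak{g}_\bb{C})\to S(\frak{g}_\bb{C})^{\frak{g}}$ is surjective, e.g.\ via symmetrization); then $\prod_{i=1}^{n}(z-\chi_i(z))$ annihilates $X$ and has symbol $p^{\,n}$, so $p$ vanishes on ${\cal V}_\frak{g}(X)$, and Kostant's description of ${\cal N}(\frak{g}_\bb{C}^*)$ as the common zero set of such $p$ finishes the argument. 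In part (3), the step you flag as the main obstacle is indeed the crux, and you should carry it out rather than only anticipate it; fortunately it is short: pick a finite-dimensional generating subspace $X_0$ and replace it by $X_0':=U(\frak{h})X_0$, which is still finite-dimensional precisely because local finiteness puts each of the finitely many basis vectors of $X_0$ inside a finite-dimensional $\frak{h}$-stable subspace. Then $X_i:=U_i(\frak{g}_\bb{C})X_0'$ is a good filtration, and it is $\frak{h}$-stable: for $Y\in\frak{h}$, $u\in U_i(\frak{g}_\bb{C})$, $v\in X_0'$ one has $Yuv=uYv+[Y,u]v\in U_i(\frak{g}_\bb{C})X_0'$, using $[\frak{h},U_i(\frak{g}_\bb{C})]\subset U_i(\frak{g}_\bb{C})$. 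Hence each $Y\in\frak{h}$ maps $X_i$ into $X_i$ (not merely into a finite-dimensional space, which is how you phrased it), so its degree-one symbol sends $X_i/X_{i-1}$ into $X_i/X_i=0$, giving $\frak{h}\subset\op{Ann}_{S(\frak{g}_\bb{C})}(\op{gr}X)$ and therefore ${\cal V}_\frak{g}(X)\subset\frak{h}^\perp$. Part (2) is fine as written: the forward direction via support at the origin and the backward direction via the constant filtration are both correct. With these two repairs your proof is complete.
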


(1) and (3) imply that
if $X$ is a $(\frak{g},K)$-module of finite length,
 then ${\cal V}_\frak{g}(X)$ is
 a $K_\bb{C}$-stable closed subvariety of
 ${\cal N}(\frak{p}^*_\bb{C})$
 because $\frak{k}_\bb{C}^\perp=\frak{p}_{\bb{C}}^{\ast}$.

We may recall
 that there is another well-known variety
 in $\frak{g}^*_\bb{C}$
 attached to a $\frak{g}$-module $X$ by using the annihilator ideal of
 $X$ in $U(\frak{g}_\bb{C})$.
Define the two-sided ideal
\[
\op{Ann} X:=
\{f\in U(\frak{g}_\bb{C}) : fv=0 \text{\ for any\ } v\in X\}
\]
and view the quotient $U(\frak{g}_\bb{C})/\op{Ann} (X)$
 as a $\frak{g}$-module by the product from left.
Then its associated variety 
 ${\cal V}_{\frak{g}}(U(\frak{g}_\bb{C})/\op{Ann} X)$
 is an $(\op{Int} \frak{g}_\bb{C})$-stable closed subvariety of
 $\frak{g}_\bb{C}^*$.
If $X$ is irreducible, 
 it is known that
 there is a unique nilpotent $(\op{Int} \frak{g}_\bb{C})$-orbit $\bb{O}_\bb{C}$
 in $\frak{g}_\bb{C}^*$ such that
 $\overline{\bb{O}_\bb{C}}=
 {\cal V}_{\frak{g}}(U(\frak{g}_\bb{C})/\op{Ann} X)$.
It should be noted
 that ${\cal V}_{\frak{g}}(X)$ has more information
 about the original $(\frak{g},K)$-module $X$
 than ${\cal V}_{\frak{g}}(U(\frak{g})/\op{Ann} X)$,
 and we shall use ${\cal V}_{\frak{g}}(X)$
 for the study of branching problems.  
A relation between ${\cal V}_{\frak{g}}(X)$ and
 ${\cal V}_{\frak{g}}(U(\frak{g}_\bb{C})/\op{Ann} X)$
 for a $(\frak{g},K)$-module $X$ is summarized as follows:

\begin{fact}[{\cite[Theorem 8.4]{Vo91}}]
\label{fact:assann}
Let $X$ be an irreducible $(\frak{g},K)$-module.
Let $\bb{O}_\bb{C}$ be as above.  
Then we have:
\begin{enumerate}
\item[$(1)$]
 ${\cal V}_\frak{g}(X)\subset
 {\cal V}_\frak{g}(U(\frak{g}_\bb{C})/\op{Ann} X)\cap\frak{p}_\bb{C}^*$.
\item[$(2)$]
 $\bb{O}_\bb{C}\cap \frak{p}_\bb{C}^*$ is the union of
 a finite number of $K_\bb{C}$-orbits
 $\bb{O}_1, \dots, \bb{O}_m$, each of which has dimension equal
 to half the dimension of $\bb{O}_\bb{C}$.
\item[$(3)$]
 Some of $\bb{O}_i$ are contained in ${\cal V}_\frak{g}(X)$ and
 they are precisely the $K_\bb{C}$-orbits of maximal dimension
 in ${\cal V}_\frak{g}(X)$.
\end{enumerate}
\end{fact}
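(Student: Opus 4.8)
The plan is to treat the three assertions in turn, reducing (1) to an algebraic comparison of annihilators, (2) to the theorem of Kostant--Rallis already quoted in the text, and (3) to a dimension count whose hard half is the equality $\dim {\cal V}_{\frak{g}}(X)=\tfrac12\dim\bb{O}_\bb{C}$.

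For (1), fix a good filtration $\{X_i\}$ on $X$. Since ${\frak k}$ acts locally finitely on the $(\frak{g},K)$-module $X$, Lemma~\ref{lem:ass}\,(3) gives ${\cal V}_{\frak{g}}(X)\subset\frak{k}_\bb{C}^\perp=\frak{p}_\bb{C}^*$, so it remains to prove ${\cal V}_{\frak{g}}(X)\subset{\cal V}_{\frak{g}}(U(\frak{g}_\bb{C})/\op{Ann} X)$. If $u\in\op{Ann} X$ lies in $U_j(\frak{g}_\bb{C})$, then $u\cdot X_i=0$ forces its symbol $\sigma_j(u)\in S(\frak{g}_\bb{C})$ to kill $X_i/X_{i-1}$ in $\op{gr}X$; hence the associated graded ideal $\op{gr}(\op{Ann} X)$ is contained in $\op{Ann}_{S(\frak{g}_\bb{C})}(\op{gr}X)$. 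Passing to zero sets reverses the inclusion, and since the quotient filtration on $U(\frak{g}_\bb{C})/\op{Ann} X$ has associated graded $S(\frak{g}_\bb{C})/\op{gr}(\op{Ann} X)$, the zero set of $\op{gr}(\op{Ann} X)$ is exactly ${\cal V}_{\frak{g}}(U(\frak{g}_\bb{C})/\op{Ann} X)$. This proves (1).

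Assertion (2) is immediate from the structure theorem of Kostant--Rallis \cite{KR} recalled after Lemma~\ref{orbitcap}: the orbit $\bb{O}_\bb{C}$ attached to the annihilator of an irreducible Harish-Chandra module is nilpotent, so $\bb{O}_\bb{C}\cap\frak{p}_\bb{C}^*$ is a finite union of equidimensional $K_\bb{C}$-orbits $\bb{O}_1,\dots,\bb{O}_m$, each of dimension $\tfrac12\dim\bb{O}_\bb{C}$. For (3), write $d:=\dim{\cal V}_{\frak{g}}(X)$, the Gelfand--Kirillov dimension of $X$. By (1) and (2), ${\cal V}_{\frak{g}}(X)$ is a closed $K_\bb{C}$-stable subvariety of $\overline{\bb{O}_\bb{C}}\cap\frak{p}_\bb{C}^*$, whose top-dimensional orbits are precisely $\bb{O}_1,\dots,\bb{O}_m$, while every boundary orbit in $(\overline{\bb{O}_\bb{C}}\setminus\bb{O}_\bb{C})\cap\frak{p}_\bb{C}^*$ has strictly smaller dimension; this already yields $d\le\tfrac12\dim\bb{O}_\bb{C}$. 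Granting the reverse inequality $d\ge\tfrac12\dim\bb{O}_\bb{C}$, the variety ${\cal V}_{\frak{g}}(X)$ has dimension equal to that of each $\bb{O}_i$, so, being $K_\bb{C}$-stable and contained in $\overline{\bb{O}_\bb{C}}\cap\frak{p}_\bb{C}^*$, it must contain at least one $\bb{O}_i$; moreover its orbits of maximal dimension are exactly the $\bb{O}_i$ that it contains, since no boundary orbit attains the dimension $d$. This is the description in (3).

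The main obstacle is the lower bound $d\ge\tfrac12\dim\bb{O}_\bb{C}$: the naive containment of (1) gives only the upper bound, and the equality reflects the Lagrangian nature of ${\cal V}_{\frak{g}}(X)$ inside $\overline{\bb{O}_\bb{C}}$. It rests on the degree and Hilbert-polynomial comparison between $X$ and $U(\frak{g}_\bb{C})/\op{Ann} X$ together with Gabber's involutivity theorem for associated varieties, and is the content of \cite[Theorem 8.4]{Vo91}, which we invoke here.
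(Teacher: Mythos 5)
Your proposal is correct, but it is worth noting that the paper does not prove Fact~\ref{fact:assann} at all: it is stated as a quoted fact, with the entire content delegated to the citation \cite[Theorem 8.4]{Vo91}. Your treatment is therefore genuinely different in that it actually supplies arguments for the parts that are formal. Your proof of (1) via the symbol calculus (the inclusion $\op{gr}(\op{Ann} X)\subset \op{Ann}_{S(\frak{g}_\bb{C})}(\op{gr}X)$, plus Lemma~\ref{lem:ass}\,(3) applied to $\frak{k}$) is complete and correct, and your derivation of (2) from the Kostant--Rallis theorem \cite{KR} uses exactly the statement the paper already quotes after Lemma~\ref{orbitcap}. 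Your reduction of (3) is also sound: since ${\cal V}_\frak{g}(X)\subset{\cal N}(\frak{p}_\bb{C}^*)$ is a finite union of $K_\bb{C}$-orbits, and every $K_\bb{C}$-orbit in $(\overline{\bb{O}_\bb{C}}\setminus\bb{O}_\bb{C})\cap\frak{p}_\bb{C}^*$ has dimension strictly less than $\tfrac12\dim\bb{O}_\bb{C}$ (again by \cite{KR} applied to the boundary orbits), the whole of (3) does follow once one knows $\dim{\cal V}_\frak{g}(X)=\tfrac12\dim\bb{O}_\bb{C}$. You correctly identify that lower bound as the one genuinely deep ingredient (Gabber's involutivity / the Gelfand--Kirillov dimension comparison) and cite \cite{Vo91} for it, which is legitimate given that the paper cites the same source for the full statement. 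What the paper's pure citation buys is brevity; what your route buys is a transparent separation of the elementary parts, which follow from results already quoted in the paper, from the single nontrivial inequality that cannot be obtained by these formal arguments.
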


The {\it Gelfand--Kirillov dimension} of $X$,
 to be denoted by $\op{DIM}(X)$, 
 is defined to be the dimension of ${\cal V}_{\frak{g}}(X)$, 
 or equivalently, 
 half the dimension
 of ${\cal V}_{\frak{g}}(U(\frak {g}_{\bb{C}})/\operatorname{Ann}X)$.
It follows from Proposition~\ref{minorbit} that
 any infinite-dimensional $(\frak{g},K)$-module $X$
 satisfies $\op{DIM}(X)\geq m(\frak{g})$.
The equality holds
 if and only if
\begin{align*}
{\cal V}_\frak{g}(X) =
\begin{cases}
 \overline{\bb{O}_{\rm min}} &
 \text{($\frak{g}$: not of Hermitian type),}\\
 \overline{\bb{O}_{{\rm min}, +}},
 \overline{\bb{O}_{{\rm min}, -}},
 {\text \ or\ } 
 \overline{\bb{O}_{{\rm min}, +}}
 \cup  \overline{\bb{O}_{{\rm min}, -}}&
 \text{($\frak{g}$:  of Hermitian type).}
\end{cases}
\end{align*}

Since ${\cal V}_\frak{g}(X)$ is $K_\bb{C}$-stable,
 the space of regular functions ${\cal O}({\cal V}_\frak{g}(X))$
 on ${\cal V}_\frak{g}(X)$
 can be viewed as a $K_\bb{C}$-module and hence as
 a $K$-module through the natural morphism $K\to K_\bb{C}$.
The following proposition shows that the
 $K$-type of a $(\frak{g},K)$-module $X$ can be approximated by 
 that of ${\cal O}({\cal V}_\frak{g}(X))$.
We write $X\underset{K}\leq Y$ for $K$-modules $X$ and $Y$
 if $\dim\op{Hom}_K(\tau, X)\leq \dim\op{Hom}_K(\tau, Y)$
 for any irreducible $K$-module $\tau$.

\begin{prop}
\label{prop:assk}
Let $X$ be a finitely generated $(\frak{g},K)$-module.
Then there exist finite-dimensional $K$-modules
 $F$ and $F'$ such that
\[
X|_K\underset{K}\leq{\cal O}({\cal V}_\frak{g}(X))\otimes F,
\quad \text{and} \quad
{\cal O}({\cal V}_\frak{g}(X))\underset{K}\leq X|_K\otimes F'.
\]
\end{prop}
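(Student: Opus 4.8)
The plan is to replace $X$ by its associated graded module and to compare that, as a $K$-module, with the coordinate ring ${\cal O}({\cal V}_\frak{g}(X))$. First I would choose a good filtration $\{X_i\}$ of $X$ that is simultaneously $K$-stable: starting from a finite-dimensional $K$-stable generating subspace $X_0\subset X$ (which exists since $K$ acts locally finitely) and setting $X_i:=U_i(\frak{g}_\bb{C})X_0$, each $X_i$ is a finite-dimensional $K$-module because $U_i(\frak{g}_\bb{C})$ is $\op{Ad}(K)$-stable. Since finite-dimensional $K$-modules are completely reducible, the canonical isomorphism $\op{gr}X\simeq X$ is in fact an isomorphism of $K$-modules. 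Writing $M:=\op{gr}X$ and $I:=\op{Ann}_{S(\frak{g}_\bb{C})}(M)$, the module $M$ is a finitely generated graded $(S(\frak{g}_\bb{C})/I,K)$-module that is \emph{faithful} over $S(\frak{g}_\bb{C})/I$, and by definition ${\cal V}_\frak{g}(X)=V(I)$, so that ${\cal O}({\cal V}_\frak{g}(X))=S(\frak{g}_\bb{C})/\sqrt{I}$. It therefore suffices to prove the two inequalities with $X|_K$ replaced by $M$.

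For the inequality $M\underset{K}\leq {\cal O}({\cal V}_\frak{g}(X))\otimes F$, I would use the $\sqrt{I}$-adic filtration $M\supset \sqrt{I}M\supset (\sqrt{I})^2 M\supset\cdots$. Since $S(\frak{g}_\bb{C})$ is Noetherian and $M$ is annihilated by $I$, we have $(\sqrt{I})^N\subset I$ and hence $(\sqrt{I})^N M=0$ for some $N$, so the filtration is finite; moreover it is by $K$-stable submodules, as $I$, and therefore $\sqrt{I}$, is $K$-stable. Each successive quotient is a finitely generated graded module over $S(\frak{g}_\bb{C})/\sqrt{I}={\cal O}({\cal V}_\frak{g}(X))$ carrying a compatible $K$-action, hence a $K$-equivariant quotient of ${\cal O}({\cal V}_\frak{g}(X))\otimes W_j$ for a finite-dimensional $K$-stable space $W_j$ of generators; this gives the inequality $\underset{K}\leq$ for each quotient. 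Adding these up over the finite filtration and using complete reducibility of $K$-modules yields the claim with $F:=\bigoplus_j W_j$.

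For the reverse inequality ${\cal O}({\cal V}_\frak{g}(X))\underset{K}\leq M\otimes F'$, I would first note that ${\cal O}({\cal V}_\frak{g}(X))=S(\frak{g}_\bb{C})/\sqrt{I}$ is a $K$-equivariant quotient of $S(\frak{g}_\bb{C})/I$, so by transitivity it is enough to show $S(\frak{g}_\bb{C})/I\underset{K}\leq M\otimes F'$. Here I exploit faithfulness: since $\op{Ann}_{S(\frak{g}_\bb{C})/I}(M)=0$ and $S(\frak{g}_\bb{C})/I$ is Noetherian, a standard minimality argument produces finitely many elements of $M$ whose annihilators intersect in $0$; enlarging their span to a finite-dimensional $K$-stable subspace $V\subset M$ keeps the intersection $\bigcap_{v\in V}\op{Ann}(v)=0$. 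The map $S(\frak{g}_\bb{C})/I\to \op{Hom}_\bb{C}(V,M)$, $b\mapsto (v\mapsto bv)$, is then $K$-equivariant and injective, and $\op{Hom}_\bb{C}(V,M)\simeq V^*\otimes M$ as $K$-modules, giving $S(\frak{g}_\bb{C})/I\underset{K}\leq M\otimes V^*$. Taking $F':=V^*$ completes this direction.

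I expect the reverse inequality to be the main obstacle: the first inequality is the routine \lq{generators}\rq\ bound, whereas dominating all of ${\cal O}({\cal V}_\frak{g}(X))$ by $X$ requires the faithfulness of $\op{gr}X$ over its own annihilator together with the Noetherian finite-intersection argument, and one must check that the resulting embedding can be arranged $K$-equivariantly (which is why the span of the chosen elements is replaced by a $K$-stable subspace). The remaining points — existence of a $K$-stable good filtration, $K$-stability of $I$ and $\sqrt{I}$, and the elementary permanence properties of $\underset{K}\leq$ under $K$-subquotients and under tensoring by finite-dimensional $K$-modules — are straightforward given complete reducibility of $K$-modules.
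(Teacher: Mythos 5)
Your proof is correct, and while your first inequality follows the paper's argument essentially verbatim (the $\sqrt{I}$-adic finite filtration of $\op{gr}X$, with each layer a $K$-equivariant quotient of ${\cal O}({\cal V}_\frak{g}(X))\otimes W_j$), your second inequality takes a genuinely different and more elementary route. The paper decomposes ${\cal V}_\frak{g}(X)=Z_1\cup\dots\cup Z_m$ into irreducible components with defining primes $P_i$, invokes the commutative-algebra fact that minimal primes over $\op{Ann}_{S(\frak{p}_\bb{C})}(\op{gr}X)$ are \emph{associated} primes of $\op{gr}X$ (citing Eisenbud) to produce elements $v_i$ whose annihilators are exactly $P_i$, and then chains ${\cal O}({\cal V}_\frak{g}(X))\underset{K}\leq\bigoplus_i S/P_i\underset{K}\leq\bigoplus_i\op{Hom}_\bb{C}(F_i,\op{gr}X)$. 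You bypass associated primes entirely: since ${\cal O}({\cal V}_\frak{g}(X))=S/\sqrt{I}$ is a $K$-equivariant quotient of $S/I$ with $I$ the full annihilator, it suffices to embed $S/I$ into $\op{Hom}_\bb{C}(V,\op{gr}X)\simeq \op{gr}X\otimes V^*$ for a finite-dimensional $K$-stable $V$ containing generators of $\op{gr}X$ --- and injectivity there needs only the trivial identity $\bigcap_{\text{generators}}\op{Ann}(v)=\op{Ann}(\op{gr}X)=I$ (so even your ``minimality argument'' is unnecessary; the generators themselves do the job). Both arguments rest on the same permanence properties of $\underset{K}\leq$ under equivariant injections, quotients, and semisimplicity of locally finite $K$-modules; what the paper's version buys is finer, componentwise information (each ${\cal O}(Z_i)$ is separately dominated by $X|_K\otimes F_i^*$), whereas yours is shorter, self-contained, and proves the stated proposition just as well.
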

\begin{proof}
Take a finite-dimensional $K$-submodule $X_0$ of $X$
 such that $U(\frak{g}_\bb{C})X_0=X$.
We get a good filtration
 $\{X_i:=U_i(\frak{g}_\bb{C})X_0\}_{i \in {\mathbb{N}}}$ of $X$, where
 $U_i(\frak{g}_\bb{C})$ is the standard increasing filtration
 of $U(\frak{g}_\bb{C})$.
The graded module
 $\op{gr} X:=\bigoplus_{i\in\bb{N}} X_i/X_{i-1}$ is a finitely generated 
 $S(\frak{p}_\bb{C})$-module
 and is isomorphic to $X$ as a $K$-module.
Let $I:=\sqrt{\op{Ann}_{S(\frak{p}_\bb{C})}(\op{gr} X)}$ be
 the radical of
 the annihilator of $\op{gr} X$.
Then $I$ is $K_{\bb{C}}$-stable and there is an isomorphism
\[{\cal O}({\cal V}_\frak{g}(X))\simeq S(\frak{p}_\bb{C})/I\]
 of $S(\frak{p}_\bb{C})$-modules, which respects the actions of $K_\bb{C}$.

Put $X'_j:=I^j\cdot\op{gr} X$ for $j\geq 0$.
Then there exists $n$ such that $X'_n=0$.
If $n$ is the smallest such integer,
 we get a finite filtration:
\[
0=X'_n\subset X'_{n-1}\subset \cdots \subset X'_0=\op{gr} X
\]
 and each successive quotient $X'_{j-1}/X'_j$ is
 an $(S(\frak{p}_\bb{C})/I)$-module.
Since $X'_{j-1}/X'_j$ is finitely generated
 as an $(S(\frak{p}_\bb{C})/I)$-module, 
 we can take
 a finite-dimensional $K$-submodule $F_j$
 of $X'_{j-1}/X'_j$ such that the map
 $(S(\frak{p}_\bb{C})/I)\otimes F_j\to X'_{j-1}/X'_j$
 is surjective.
Then we have
\[
X'_{j-1}/X'_j
\underset{K}\leq
(S(\frak{p}_\bb{C})/I)\otimes F_j
\simeq {\cal O}({\cal V}_\frak{g}(X))\otimes F_j
\]
and hence 
\[
X|_K\simeq
\bigoplus_{j=1}^n X'_{j-1}/X'_j
\underset{K}\leq
\bigoplus_{j=1}^n(S(\frak{p}_\bb{C})/I)\otimes F_j
\simeq {\cal O}({\cal V}_\frak{g}(X))\otimes \bigoplus_{j=1}^n F_j.
\]
The first inequality in the lemma follows by setting
 $F=\bigoplus_{j=1}^n F_j$.

Let us prove the opposite estimate.
We write ${\cal V}_\frak{g}(X)=Z_1\cup\dots\cup Z_m$
 for the irreducible decomposition, 
 and $P_i$ for the 
 defining ideal of $Z_i$ in $S(\frak {p}_{\bb{C}})$.
For each $i$,
 $Z_i$ and $P_i$ are $K_{\bb{C}}$-stable because $K_{\bb{C}}$ is connected.
Since $P_1,\dots, P_m$ are minimal prime ideals containing
 $\op{Ann}_{S(\frak{p}_\bb{C})}(\op{gr} X)$,
 they are associated primes of
 the $S(\frak{p}_\bb{C})$-module $\op{gr} X$
 (see \cite[Theorem 3.1]{Eis}).
This means that there exists
 an element $v_i\in \op{gr} X$ such that 
 the kernel of the map $S(\frak{p}_\bb{C}) \to \op{gr} X$, 
 $f\mapsto fv_i$ equals $P_i$. 
Let $F_i$ be a finite-dimensional $K$-submodule of
 $\op{gr} X$ that contains $v_i$. 
Then we get a map
\[\varphi_i:S(\frak{p}_\bb{C}) \to \op{Hom}_\bb{C}(F_i, \op{gr} X), \quad
 f \mapsto (v\mapsto fv),
\] 
 which respects the actions of $K$.
Let $\op{e}_{v_i}$ be the evaluation map
\[\op{e}_{v_i}:\op{Hom}_\bb{C}(F_i, \op{gr} X)\to \op{gr} X, \quad
 \alpha\mapsto \alpha(v_i).
\]
Then $\op{Ker}(\varphi_i)\subset
 \op{Ker}(\op{e}_{v_i}\circ\varphi_i)=P_i$.
As a consequence, 
\begin{align*}
{\cal O}({\cal V}_{\frak{g}}(X))
&\underset{K}\leq
\bigoplus_{i=1}^m{\cal O}(Z_i)
\simeq
\bigoplus_{i=1}^m S(\frak{p}_\bb{C})/P_i\\
&\underset{K}\leq
\bigoplus_{i=1}^m S(\frak{p}_\bb{C})/\op{Ker} (\varphi_i)
\underset{K}\leq
\bigoplus_{i=1}^m \op{Hom}_\bb{C}(F_i, \op{gr} X).
\end{align*}
By combining these inequalities with the natural isomorphisms of
 $K$-modules
\begin{align*}
\bigoplus_{i=1}^m \op{Hom}_\bb{C}(F_i, \op{gr} X)
\simeq
\bigoplus_{i=1}^m \op{gr} X\otimes F_i^*
\simeq
X|_K \otimes \bigoplus_{i=1}^m  F_i^*,
\end{align*}
we obtain the second inequality in the lemma by setting 
 $F'=\bigoplus_{i=1}^m F_i^*$.
\end{proof}

An irreducible $\frak{g}$-module $X$ is called
 a highest weight module
 if there exists a Borel subalgebra $\frak{b}$ of $\frak{g}_\bb{C}$
 such that $X$ has a one-dimensional $\frak{b}$-stable subspace.
If a simple Lie group $G$ allows an infinite-dimensional irreducible $(\frak{g},K)$-module
 which is simultaneously a highest weight module,
 then the group $G$ must be of Hermitian type
 and the Borel subalgebra $\frak{b}$ is compatible with
 the decomposition $\frak{p}_\bb{C}=\frak{p}_+ + \frak{p}_-$, namely, 
 either $\frak{b}\supset \frak{p}_+$ or $\frak{b}\supset\frak{p}_-$
 holds.

\begin{de}
{\rm 
Suppose that $G$ is a simple Lie group of Hermitian type.
An irreducible $(\frak{g},K)$-module $X$ is called
 a {\it highest weight $(\frak{g},K)$-module}
 (resp. {\it lowest weight $(\frak{g},K)$-module})
 if $X$ has a non-zero vector annihilated by
 $\frak{p}_+$
 (resp. $\frak{p}_-$).
}
\end{de}

The highest weight $(\frak{g},K)$-modules and
 the lowest weight $(\frak{g},K)$-modules are characterized
 by their associated varieties:

\begin{lem}
\label{lem:hwas}
Suppose that $G$ is a connected simple Lie group of Hermitian type, 
 and $X$ is an irreducible $(\frak{g},K)$-module.
Then $X$ is a highest weight $(\frak{g},K)$-module
 if and only if ${\cal V}_{\frak{g}}(X)\subset \frak{p}^*_-$.  
Likewise,
 $X$ is a lowest weight $(\frak{g},K)$-module
 if and only if ${\cal V}_{\frak{g}}(X)\subset \frak{p}^*_+$.
\end{lem}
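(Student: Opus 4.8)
The plan is to prove both equivalences by linking the containment ${\cal V}_{\frak g}(X)\subset\frak p_\mp^*$ to the local finiteness of the action of $\frak p_\pm$ on $X$. Throughout I use the identifications $\frak p_\mp^*=\frak p_\pm^\perp\cap\frak p_\bb C^*$ coming from the $K_\bb C$-module structure fixed in Section~2 (equivalently: the linear functionals on $\frak p_\bb C^*$, viewed as $\frak p_\bb C=S^1(\frak p_\bb C)$, that vanish on $\frak p_\mp^*$ are exactly $\frak p_\pm$). I also fix the element $z\in\sqrt{-1}\frak z_K$ from the proof of Proposition~\ref{minorbit}, normalized so that $\op{ad}(z)=\pm1$ on $\frak p_\pm^*$; equivalently $\op{ad}(z)=-1$ on $\frak p_+$ and $\op{ad}(z)=+1$ on $\frak p_-$. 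It suffices to treat the highest weight case, the lowest weight case following by interchanging $\frak p_+\leftrightarrow\frak p_-$ (hence $\frak p_+^*\leftrightarrow\frak p_-^*$) and replacing $z$ by $-z$.

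For the implication ($\Rightarrow$), suppose $X$ is a highest weight module and choose $v\neq0$ with $\frak p_+v=0$. Put $W:=U(\frak k_\bb C)v$; then $\frak p_+W=0$ because $[\frak p_+,\frak k_\bb C]\subset\frak p_+$, and since $U(\frak p_+)v=\bb Cv$ the Poincar\'e--Birkhoff--Witt decomposition gives $X=U(\frak g_\bb C)v=U(\frak p_-)W$ with $W$ finite dimensional. Commuting an element $\xi\in\frak p_+$ through a monomial $Y_1\cdots Y_nw$ with $Y_i\in\frak p_-$ and $w\in W$, and using $[\frak p_+,\frak p_-]\subset\frak k_\bb C$ together with $\xi w=0$, one sees that $\frak p_+$ strictly lowers the $\frak p_-$-degree. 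Hence $\frak p_+$ acts locally nilpotently, in particular locally finitely, on $X$, and Lemma~\ref{lem:ass}(3) yields ${\cal V}_{\frak g}(X)\subset\frak p_+^\perp$. Intersecting with $\frak p_\bb C^*$ gives ${\cal V}_{\frak g}(X)\subset\frak p_-^*$.

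For the converse ($\Leftarrow$), suppose ${\cal V}_{\frak g}(X)\subset\frak p_-^*$. As ${\cal V}_{\frak g}(X)$ is a closed $K_\bb C$-stable subvariety of $\frak p_-^*$, restriction of polynomials gives a $K_\bb C$-equivariant surjection ${\cal O}(\frak p_-^*)\twoheadrightarrow{\cal O}({\cal V}_{\frak g}(X))$, so ${\cal O}({\cal V}_{\frak g}(X))\underset{K}\leq{\cal O}(\frak p_-^*)$. Combined with the first inequality of Proposition~\ref{prop:assk}, there is a finite-dimensional $K$-module $F$ with $X|_K\underset{K}\leq{\cal O}(\frak p_-^*)\otimes F$. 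Now $z$ acts on the degree-$k$ part $S^k((\frak p_-^*)^*)$ of ${\cal O}(\frak p_-^*)$ by the scalar $+k$, so every $z$-eigenvalue on ${\cal O}(\frak p_-^*)$ is $\geq0$; consequently the $z$-eigenvalues occurring on $X$ form a discrete subset of $\bb R$ that is bounded below, and hence has a minimum $c_0$, with $X[c_0]\neq0$. Since $\op{ad}(z)=-1$ on $\frak p_+$, every $\xi\in\frak p_+$ maps $X[c]$ into $X[c-1]$; thus $\frak p_+X[c_0]\subset X[c_0-1]=0$, and any nonzero vector of $X[c_0]$ shows, by definition, that $X$ is a highest weight module.

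The main obstacle is the converse: the hypothesis ${\cal V}_{\frak g}(X)\subset\frak p_-^*$ is a statement about the associated graded module, and the point is to convert it into the existence of an honest $\frak p_+$-invariant vector in $X$. Proposition~\ref{prop:assk} is precisely the bridge that transports this geometric containment into a bound on the $K$-types, after which the central grading by $z$ forces the $\frak p_+$-action to possess a bottom eigenspace and thereby produces the highest weight vector. The forward direction is by contrast routine, once one observes that $\frak p_+$ acts locally nilpotently on any highest weight module.
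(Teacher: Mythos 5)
Your proof is correct and takes essentially the same route as the paper's: the forward implication is Lemma~\ref{lem:ass}(3) applied to $\mathfrak{p}_+$ (you additionally spell out the local nilpotence of $\mathfrak{p}_+$ on a highest weight module, which the paper leaves implicit), and the converse uses the $K$-type estimate of Proposition~\ref{prop:assk} against $\mathcal{O}(\mathfrak{p}_-^*)\simeq S(\mathfrak{p}_-)$ together with the grading by a central element $z\in\sqrt{-1}\mathfrak{z}_K$, whose extreme eigenspace in $X$ is annihilated by $\mathfrak{p}_+$. The only difference is the (immaterial) opposite sign normalization of $z$.
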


\begin{proof}
If $X$ is a highest weight $(\frak{g},K)$-module,
 Lemma~\ref{lem:ass} (3) gives
 ${\cal V}_{\frak{g}}(X)\subset
 \frak{p}_\bb{C}^*\cap (\frak{p}_+)^\perp = \frak{p}^*_-$.

Suppose that ${\cal V}_{\frak{g}}(X)\subset\frak{p}^*_-$.
Then Proposition~\ref{prop:assk} yields an estimate of
 the $K$-type of $X$:
\begin{align}
\label{eq:kineq}
X|_K
\underset{K}\leq
 {\cal O}({\cal V}_{\frak{g}}(X))\otimes F
\underset{K}\leq
 {\cal O}(\frak{p}^*_-)\otimes F,
\end{align}
where $F$ is a finite-dimensional $K$-module.
Let $\frak{z}_K$ be the center of $\frak{k}$ and
 choose $z\in\sqrt{-1}\frak{z}_K$ 
 such that
 $\op{ad}(z)=1$ on $\frak{p}_+$ and
 $\op{ad}(z)=-1$ on $\frak{p}_-$.
Since ${\cal O}(\frak{p}^*_-)$ is isomorphic 
 to the symmetric algebra $S(\frak{p}_-)$, 
 the eigenvalues of $z$ on
 ${\cal O}(\frak{p}^*_-)$ are all negative.
By (\ref{eq:kineq}), 
 the set of eigenvalues of $z$ on
 $X$ is bounded above.
Hence there exists a maximal eigenvalue of $z$ and
 then $\frak{p}_+$ annihilates the corresponding
 eigenspace, which implies that
 $X$ is a highest weight $(\frak{g},K)$-module.

The proof for lowest weight $(\frak{g},K)$-modules
 is similar.
\end{proof}

\section{Discrete Decomposability}
\label{sec:dd}

Let $G$ be a real reductive Lie group and
 $\sigma$ an involutive automorphism of $G$.  
Then $\sigma$ induces involutions
 of the Lie algebra
 $\frak {g}$,
 its complexification $\frak {g}_{\bb{C}}$,
 the inner automorphism group $\op{Int}\frak{g}_{\bb{C}}$,
 etc.,
 for which we use the same letter $\sigma$.  
The subgroup $G^\sigma:=\{g\in G: \sigma(g)=g\}$
 is a reductive Lie group
 with Lie algebra 
 $\frak {g}^{\sigma}=\{x \in \frak {g}:\sigma (x)=x\}$, 
 and the pair $(G, H)$
 is called a reductive symmetric pair
 if $H$ is an open subgroup of $G^{\sigma}$.
Since the discrete decomposability
 of the restriction (see Definition~\ref{dd} below)
 does not depend on (finitely many)
 connected components of the subgroup,
 we shall consider the case $H=G^{\sigma}$
 without loss of generality.  
We can and do take a Cartan involution $\theta$
 of $G$ that commutes with $\sigma$.  
Then $\theta|_{G^{\sigma}}$ is a Cartan involution of $G^{\sigma}$.  
We set $K=G^{\theta}$ and $K^{\sigma}=G^\sigma \cap K$.  

The notion of discrete decomposability of $\frak{g}$-modules was
 introduced in \cite{kob98ii}.
We apply it to the restriction with respect to symmetric pairs, from
 $(\frak{g},K)$-modules to $(\frak{g}^\sigma, K^\sigma)$-modules.

\begin{de}
\label{dd}
{\rm 
A $(\frak{g},K)$-module $X$
 is said to be {\it{discretely decomposable as
 a $(\frak{g}^\sigma,K^\sigma)$-module}}
 if there exists an increasing filtration $\{X_i\}_{i\in \bb{N}}$ of
 $(\frak{g}^\sigma,K^\sigma)$-modules such that 
\begin{itemize}
\item $\bigcup_{i\in \bb{N}} X_i=X$ and
\item $X_i$ is of finite length as a $(\frak{g}^\sigma,K^\sigma)$-module
 for any $i\in \bb{N}$.
\end{itemize}
Discrete decomposability is preserved
 by taking submodules,
 quotients,
 and the tensor product 
 with finite-dimensional representations.  
}
\end{de}

\begin{rem}[see {\cite[Lemma 1.3]{kob98ii}}]
{\rm 
Suppose that $X$ is a unitarizable $(\frak{g},K)$-module.
Then $X$ is discretely decomposable  as a $(\frak{g}^\sigma,K^\sigma)$-module
if and only if 
$X$ is isomorphic to 
an algebraic direct sum of irreducible $(\frak{g},K)$-modules.
}
\end{rem}

We will state a necessary and a sufficient condition
 for the discrete decomposability, which were established in
 \cite{kob98i}, \cite{kob98ii}.

We write
\[\op{pr}:\frak{g}_\bb{C}^* \to {\frak{g}^{\sigma}_\bb{C}}^* \]
 for the restriction map.

\begin{fact}[necessary condition {\cite[Corollary 3.5]{kob98ii}}]
\label{nec}
Let $X$ be a $(\frak{g},K)$-module of finite length and suppose that
 $X$ is discretely decomposable
 as a $(\frak{g}^\sigma,K^\sigma)$-module.
Then 
$\op{pr} ({\cal V}_\frak{g} (X))\subset
 {\cal N}({\frak{g}_\bb{C}^\sigma}^*)$,
 where ${\cal N}({\frak{g}_\bb{C}^\sigma}^*)$ is the nilpotent variety
 of ${\frak{g}_\bb{C}^\sigma}^*$.
\end{fact}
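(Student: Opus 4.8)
The plan is to reduce the statement to the assertion that the principal symbol of every positive-degree element of the center ${\cal Z}$ of $U(\frak{g}^\sigma_\bb{C})$ vanishes on ${\cal V}_\frak{g}(X)$. Recall the classical fact that the principal symbols of the elements of ${\cal Z}$ exhaust the invariant algebra $S(\frak{g}^\sigma_\bb{C})^{\frak{g}^\sigma}$, and that by Kostant's theorem the nilpotent variety ${\cal N}({\frak{g}_\bb{C}^\sigma}^*)$ is precisely the common zero set of the positive-degree invariants. Moreover, for $\bar z \in S(\frak{g}^\sigma_\bb{C})$ and $\eta \in \frak{g}_\bb{C}^*$ one has $\bar z(\op{pr}(\eta)) = \bar z(\eta)$, where on the right $\bar z$ is regarded, via the inclusion $S(\frak{g}^\sigma_\bb{C}) \hookrightarrow S(\frak{g}_\bb{C})$ dual to $\op{pr}$, as a polynomial on $\frak{g}_\bb{C}^*$. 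Hence, once each such $\bar z$ is known to vanish on ${\cal V}_\frak{g}(X)$, every point of $\op{pr}({\cal V}_\frak{g}(X))$ is annihilated by all positive-degree invariants and therefore lies in ${\cal N}({\frak{g}_\bb{C}^\sigma}^*)$.

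To bring in discrete decomposability, I would first observe that it forces ${\cal Z}$ to act locally finitely on $X$. Writing $X = \bigcup_i X_i$ with each $X_i$ of finite length as a $(\frak{g}^\sigma, K^\sigma)$-module, every irreducible subquotient of $X_i$ carries an infinitesimal character by Schur's lemma, so ${\cal Z}$ acts on $X_i$ through finitely many generalized central characters. Consequently each $z \in {\cal Z}$ satisfies a polynomial identity on every $X_i$, hence acts locally finitely on all of $X$. This is the analogue, for the (non-Lie) subalgebra ${\cal Z} \subset U(\frak{g}_\bb{C})$, of the hypothesis appearing in Lemma~\ref{lem:ass}(3).

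The heart of the argument is to convert this local finiteness into the vanishing of $\bar z$ on ${\cal V}_\frak{g}(X)$. Fix a good filtration $\{F_j X\}$ of $X$ as a $\frak{g}$-module. Since each $F_j X$ is finite-dimensional and the $X_i$ exhaust $X$, one has $F_j X \subset X_{i(j)}$ for a suitable index $i(j)$. On $X_{i(j)}$ the element $z$ satisfies a monic relation $\prod_k (z-\lambda_k)^{N}=0$; as the $\lambda_k$ have degree $0$, the principal symbol of the left-hand side is $\bar z^{M}$ with $M \geq 1$. For $\overline v \in \op{gr}_j X$ lifted to $v \in F_j X$, computing $\bar z^{M}\cdot\overline v$ via the lift $\prod_k (z-\lambda_k)^{N}$ gives $\bar z^{M}\cdot\overline v = 0$, so $\bar z$ is locally nilpotent on $\op{gr} X$. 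Because $\op{gr} X$ is finitely generated over the Noetherian ring $S(\frak{g}_\bb{C})$, local nilpotence upgrades to $\bar z^{L} \in \op{Ann}_{S(\frak{g}_\bb{C})}(\op{gr} X)$ for a single $L$, whence $\bar z \in \sqrt{\op{Ann}_{S(\frak{g}_\bb{C})}(\op{gr} X)}$, which is exactly the ideal of ${\cal V}_\frak{g}(X)$; thus $\bar z$ vanishes there.

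The main obstacle, and the reason the argument must be routed through the ambient ring $S(\frak{g}_\bb{C})$, is that $\op{gr} X$ is typically \emph{not} finitely generated over $S(\frak{g}^\sigma_\bb{C})$, so the support of $\op{gr} X$ over the subring cannot be controlled by a naive Nullstellensatz. Pulling back along $\op{pr}$ and treating the symbols $\bar z$ as polynomials on the full space $\frak{g}_\bb{C}^*$ circumvents this: over $S(\frak{g}_\bb{C})$ the module is finitely generated, local nilpotence is equivalent to global nilpotence, and the radical of the annihilator is precisely the ideal of ${\cal V}_\frak{g}(X)$. A secondary technical point to check is the comparison of principal symbols in the polynomial identity, namely that the leading term $\bar z^{M}$ does not cancel; this holds since $\bar z \neq 0$ in the integral domain $S(\frak{g}^\sigma_\bb{C})$.
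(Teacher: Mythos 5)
Your proof is correct, and it takes a genuinely different (more self-contained) route than the paper, which does not prove this statement at all but imports it as a Fact from \cite[Corollary 3.5]{kob98ii}. The original argument there is a two-step one: first a filtration-comparison theorem (\cite[Theorem 3.1]{kob98ii}) showing $\op{pr}({\cal V}_{\frak{g}}(X))\subset {\cal V}_{\frak{g}^\sigma}(X)$, where ${\cal V}_{\frak{g}^\sigma}(X):=\overline{\bigcup_i {\cal V}_{\frak{g}^\sigma}(X_i)}$ is an associated variety attached to $X$ as a (non-finitely generated) $\frak{g}^\sigma$-module through the exhausting family $\{X_i\}$; second, the standard fact that each finite-length $\frak{g}^\sigma$-module $X_i$ satisfies ${\cal V}_{\frak{g}^\sigma}(X_i)\subset {\cal N}({\frak{g}_\bb{C}^\sigma}^*)$, which is proved by the same central-character mechanism you use. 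Your proof merges these two steps and avoids introducing ${\cal V}_{\frak{g}^\sigma}(X)$ altogether: by pulling the positive-degree invariants of $S(\frak{g}^\sigma_\bb{C})$ back to $S(\frak{g}_\bb{C})$ and showing they are nilpotent on $\op{gr}X$ over the ambient ring, you exploit finite generation where it actually holds --- precisely the obstruction you flag in your last paragraph, and precisely what Kobayashi's Theorem 3.1 instead overcomes by a more delicate comparison of good filtrations over $U(\frak{g}_\bb{C})$ and $U(\frak{g}^\sigma_\bb{C})$. What you lose is the finer intermediate inclusion $\op{pr}({\cal V}_{\frak{g}}(X))\subset{\cal V}_{\frak{g}^\sigma}(X)$, which carries more information than nilpotency alone; what you gain is a short direct proof of the Fact as stated, resting only on three standard ingredients: Dixmier--Schur (irreducible $(\frak{g}^\sigma,K^\sigma)$-modules admit infinitesimal characters), the equality $\op{gr}{\cal Z}=S(\frak{g}^\sigma_\bb{C})^{\frak{g}^\sigma}$, and Kostant's description of ${\cal N}({\frak{g}_\bb{C}^\sigma}^*)$ as the common zero locus of the positive-degree invariants. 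The delicate points (non-cancellation of the leading symbol $\bar z^M$, finite-dimensionality of the good-filtration pieces so that $F_jX\subset X_{i(j)}$, passage from local to global nilpotence via finite generation) are all handled correctly.
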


We take a $\sigma$-stable Cartan subalgebra
 $\frak{t}=\frak{t}^\sigma+\frak{t}^{-\sigma}$
 of $\frak{k}$
 such that $\frak{t}^{-\sigma}$ is a maximal abelian subalgebra
 of $\frak{k}^{-\sigma}$.
We say a positive system $\Delta^+(\frak{k}_\bb{C}, \frak{t}_\bb{C})$
 is {\it{$(-\sigma)$-compatible}} 
 if $\{\alpha|_{\frak {t}_{\bb{C}}^{-\sigma}}:
 \alpha \in \Delta^+(\frak{k}_\bb{C}, \frak{t}_\bb{C})\}
 \setminus\{0\}$
 is a positive system 
 of the restricted root system 
 $\Sigma(\frak{k}_\bb{C},\frak{t}_\bb{C}^{-\sigma})$.  
Write $B_K$ for the Borel subgroup of $K_\bb{C}$ corresponding to 
  $\Delta^+(\frak{k}_\bb{C},\frak{t}_\bb{C})$.
If $\Delta^+(\frak{k}_\bb{C},\frak{t}_\bb{C})$ 
 is $(-\sigma)$-compatible,
 then $(K_\bb{C}^\sigma \cdot B_K)/B_K$
 is an open dense subset of the flag variety
 $K_\bb{C}/B_K$.
In Section~\ref{sec:dd} and Section~\ref{sec:class}, we always take
 a $(-\sigma)$-compatible
 positive system $\Delta^+(\frak{k}_\bb{C}, \frak{t}_\bb{C})$.

The asymptotic $K$-support $\op{AS}_K(X)$
 of a $K$-module $X$
 is a closed cone 
 in $\sqrt{-1} \frak {t}^{\ast} \setminus \{0\}$,
 which is defined as the limit cone 
 of the highest weights
 of irreducible $K$-modules
 occurring in $X$.  
The asymptotic $K$-support is preserved
 by taking the tensor product 
 of $X$ with a finite-dimensional representation.  
An estimate of the singularity spectrum
 of a hyperfunction character of $X$
 yields a criterion
 of \lq{$K'$-admissibility}\rq\ of $X$
 for a subgroup $K'$ of $K$.  
See \cite[Theorem 2.8]{kob98i}.  
When it is applied to the restriction with respect to reductive 
symmetric pairs $({\mathfrak {g}}, {\mathfrak {g}}^{\sigma})$  
we have:

\begin{fact}[sufficient condition {\cite[Example 2.14]{kob98i}}]
\label{suff}
Let $X$ be a $(\frak{g},K)$-module of finite length and suppose that
$\op{AS}_K(X)\cap \sqrt{-1}(\frak{t}^{\sigma})^\perp=\emptyset$.
Then $X$ is discretely decomposable as a $(\frak{g}^\sigma,K^\sigma)$-module.
\end{fact}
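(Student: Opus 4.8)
The plan is to prove Fact~\ref{suff} by deducing discrete decomposability from the stronger property that $X$ is \emph{$K^\sigma$-admissible} (every irreducible $K^\sigma$-module occurs in $X|_{K^\sigma}$ with finite multiplicity), and to establish the latter by a micro-local estimate on the $K$-character of $X$. First I would use that $X$, having finite length, is $K$-admissible by Harish-Chandra's theorem, so that the $K$-character $\Theta_X=\sum_{\tau}m_X(\tau)\chi_\tau$ is a well-defined $\op{Ad}(K)$-invariant hyperfunction on $K$ whose multiplicities grow at most polynomially; in particular $\op{AS}_K(X)$ is a genuine closed cone in $\sqrt{-1}\frak{t}^*\setminus\{0\}$ and, being $W_K$-invariant, may be read off on the maximal torus.

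Next I would invoke the Kashiwara--Vergne-type comparison used throughout \cite{kob98i}, identifying $\op{AS}_K(X)$ with the singularity spectrum (the hyperfunction wave front set at the identity) of $\Theta_X$. The restriction $X|_{K^\sigma}$ has character $\iota^*\Theta_X$, the pullback of $\Theta_X$ along the inclusion $\iota\colon K^\sigma\hookrightarrow K$, whose differential at the identity is $\frak{k}^\sigma\hookrightarrow\frak{k}$. By the pullback theorem for singularity spectra of hyperfunctions, $\iota^*\Theta_X$ is again a well-behaved hyperfunction, and $X$ is $K^\sigma$-admissible, provided the singularity spectrum of $\Theta_X$ meets the conormal bundle of $\iota$ only at the zero section. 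After reducing to the torus by $\op{Ad}(K)$-invariance, the relevant conormal directions are exactly the annihilator of $\frak{t}^\sigma$ in $\sqrt{-1}\frak{t}^*$, namely $\sqrt{-1}(\frak{t}^\sigma)^\perp$. Thus the hypothesis $\op{AS}_K(X)\cap\sqrt{-1}(\frak{t}^\sigma)^\perp=\emptyset$ is precisely the required transversality: a direction in $\op{AS}_K(X)$ vanishing on $\frak{t}^\sigma$ would be exactly what lets infinitely many $K$-types collapse onto a single $K^\sigma$-type, and this is now excluded. Hence $X$ is $K^\sigma$-admissible.

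Finally I would pass from $K^\sigma$-admissibility to discrete decomposability in the sense of Definition~\ref{dd}. Decompose $X=\bigoplus_{\tau}m_X(\tau)\,\tau$ over $\widehat{K^\sigma}$ with every $m_X(\tau)<\infty$, fix an enumeration of $\widehat{K^\sigma}$, and let $X_i$ be the $(\frak{g}^\sigma,K^\sigma)$-submodule generated by the $K^\sigma$-isotypic components indexed by the first $i$ elements. Since $\frak{g}^\sigma$ carries each $K^\sigma$-isotypic component into a finite union of $K^\sigma$-isotypic components and all multiplicities are finite, each $X_i$ is finitely generated and $K^\sigma$-admissible, hence of finite length as a $(\frak{g}^\sigma,K^\sigma)$-module; the $X_i$ exhaust $X$. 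This is exactly the exhausting filtration by finite-length $(\frak{g}^\sigma,K^\sigma)$-submodules required for discrete decomposability.

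The main obstacle is the middle step, carried out in the finite-length (non-unitary) setting: one must establish the identification of $\op{AS}_K(X)$ with the singularity spectrum of the hyperfunction character $\Theta_X$ (not merely an $L^2$-object), correctly match the conormal bundle of $K^\sigma\hookrightarrow K$ with $\sqrt{-1}(\frak{t}^\sigma)^\perp$ using the $\op{Ad}(K)$-invariance to reduce to the maximal torus, and then apply the restriction theorem for wave front sets. All of the analytic content lives here; once $K^\sigma$-admissibility is secured, the concluding algebraic construction of the filtration is routine.
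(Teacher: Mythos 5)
Your proposal is correct and takes essentially the same route as the paper: the paper does not prove this Fact itself but quotes it from \cite[Theorem 2.8 and Example 2.14]{kob98i}, and your argument (hyperfunction $K$-character and its singularity spectrum identified with $\op{AS}_K(X)$, the wave-front restriction theorem yielding $K^\sigma$-admissibility, then the routine passage from admissibility to an exhaustive filtration by finite-length $(\frak{g}^\sigma,K^\sigma)$-submodules) is precisely the cited one. The only point worth making explicit is that your identification of the relevant conormal directions with $\sqrt{-1}(\frak{t}^\sigma)^\perp$ is exactly where the paper's standing hypotheses enter --- $\frak{t}^{-\sigma}$ maximal abelian in $\frak{k}^{-\sigma}$ and a $(-\sigma)$-compatible positive system, which force $\op{Ad}^*(K)(\frak{k}^{\sigma})^{\perp}\cap\sqrt{-1}\frak{t}^*$ to meet the dominant cone (where $\op{AS}_K(X)$ lives) only inside $\sqrt{-1}(\frak{t}^\sigma)^\perp$; without these choices of $\frak{t}$ and $\Delta^+$ the identification, and indeed the Fact as stated, would fail.
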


\begin{rem}
{\rm
Let $\theta$ be a Cartan involution of $G$ such that
 $\theta\sigma=\sigma\theta$.
Then $\theta\sigma$ becomes another involution of $G$ and
 the symmetric pair $(\frak{g},\frak{g}^{\theta\sigma})$
 is called the {\it associated pair} of $(\frak{g},\frak{g}^\sigma)$.
We note that $K^{\sigma} = K^{\theta\sigma}$.  
We can prove that
 a $(\frak {g},K)$-module $X$ is discretely decomposable
 as a $(\frak {g}^{\sigma},K^{\sigma})$-module
 if and only if it is discretely decomposable
 as a $(\frak {g}^{\theta\sigma},K^{\theta\sigma})$-module,
 though we do not use this in the paper.  
}
\end{rem}

In the rest of this section, we suppose that $G$ is a non-compact
 connected simple Lie group.

\begin{lem}
\label{projorbit}
Let $G$ be a non-compact connected simple Lie group
 and let $\beta$ be the highest non-compact root given in
 Definition~\ref{beta}.
Then $\op{pr}(K_\bb{C}\cdot\frak{p}^*_{\beta})\subset 
{\cal N}({\frak{p}_\bb{C}^\sigma}^*)$
 if and only if $\sigma\beta\neq -\beta$.
\end{lem}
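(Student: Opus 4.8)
The plan is to identify the restriction map $\op{pr}$ with an orthogonal projection, reduce the statement about the whole orbit $K_\bb{C}\cdot\frak{p}^*_\beta$ to a single weight vector, and then run an $\frak{sl}_2$-type contraction argument on that vector. First I would fix the Killing form $\kappa$ of $\frak{g}_\bb{C}$ and use it to identify $\frak{g}_\bb{C}^*\cong\frak{g}_\bb{C}$, $\frak{p}_\bb{C}^*\cong\frak{p}_\bb{C}$, and ${\frak{g}_\bb{C}^\sigma}^*\cong\frak{g}_\bb{C}^\sigma$. Since $\frak{g}_\bb{C}^\sigma$ and its $(-1)$-eigenspace $\frak{g}_\bb{C}^{-\sigma}$ are $\kappa$-orthogonal, under these identifications $\op{pr}$ becomes the projection $P^\sigma=\tfrac12(1+\sigma)$, and ${\cal N}({\frak{p}_\bb{C}^\sigma}^*)$ becomes the nilpotent cone of $\frak{g}_\bb{C}^\sigma$ sitting inside $\frak{p}_\bb{C}^\sigma$. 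Fixing a generator $E_\beta$ of the weight line $\frak{p}^*_\beta$, the key point is that $\op{pr}(E_\beta)$ corresponds to $\tfrac12(E_\beta+\sigma E_\beta)$, and since $\sigma$ carries the $\frak{t}$-weight space $\frak{p}_\beta$ to $\frak{p}_{\sigma\beta}$, we have $\sigma E_\beta=c\,E_{\sigma\beta}$ for some $c\neq 0$.

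Next I would reduce the assertion about $K_\bb{C}\cdot\frak{p}^*_\beta$ to the single line $\frak{p}^*_\beta$. Because $\frak{p}^*_\beta$ is the highest-weight line of $\frak{p}_\bb{C}^*$ (resp. of $\frak{p}^*_+$), it is $B_K$-stable, so $B_K\cdot\frak{p}^*_\beta=\frak{p}^*_\beta$; as the positive system is $(-\sigma)$-compatible, $K_\bb{C}^\sigma\cdot B_K$ is dense in $K_\bb{C}$, and taking closures of the continuous image gives $K_\bb{C}\cdot\frak{p}^*_\beta=\overline{K_\bb{C}^\sigma\cdot\frak{p}^*_\beta}$. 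Since $\op{pr}$ is linear and $K_\bb{C}^\sigma$-equivariant while ${\cal N}({\frak{p}_\bb{C}^\sigma}^*)$ is closed and $K_\bb{C}^\sigma$-stable, it then suffices to prove that $\op{pr}(\frak{p}^*_\beta)\subset{\cal N}({\frak{p}_\bb{C}^\sigma}^*)$, i.e. that $E_\beta+\sigma E_\beta$ is nilpotent, precisely when $\sigma\beta\neq-\beta$. For the converse implication I would instead argue directly: $E_\beta$ already lies in $K_\bb{C}\cdot\frak{p}^*_\beta$, so a single non-nilpotent value of $\op{pr}$ on $\frak{p}^*_\beta$ already rules out the inclusion.

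Finally I would settle the weight-vector computation, where the role of the Killing form becomes decisive. If $\sigma\beta=-\beta$, then $E_\beta+\sigma E_\beta=E_\beta+c\,E_{-\beta}$ satisfies $\kappa(E_\beta+\sigma E_\beta,\,E_\beta+\sigma E_\beta)=2c\,\kappa(E_\beta,E_{-\beta})\neq 0$, because $\kappa$ pairs the one-dimensional spaces $\frak{p}_\beta$ and $\frak{p}_{-\beta}$ nondegenerately while $\kappa(E_\beta,E_\beta)=\kappa(\sigma E_\beta,\sigma E_\beta)=0$; hence this element is not nilpotent and $\op{pr}(E_\beta)\notin{\cal N}({\frak{p}_\bb{C}^\sigma}^*)$. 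If $\sigma\beta\neq-\beta$, then since $\sigma$ is an isometry of $\sqrt{-1}\frak{t}^*$ the weights $\beta$ and $\sigma\beta$ have equal length and are not negatively proportional, so there exists $H\in\sqrt{-1}\frak{t}$ with $\beta(H)>0$ and $(\sigma\beta)(H)>0$; then $\exp(-t\,\op{ad}H)(E_\beta+c\,E_{\sigma\beta})\to 0$ as $t\to+\infty$, so $0$ lies in the closure of the $(\op{Int}\frak{g}_\bb{C})$-orbit of $E_\beta+\sigma E_\beta$, forcing it to be nilpotent. I expect the main obstacle to be organizing the middle step correctly, so that the density of $K_\bb{C}^\sigma\cdot B_K$, the $B_K$-stability of the highest weight line, and the closedness of the nilpotent cone interact as claimed; the remaining ingredients are the elementary facts that two nonzero functionals with disjoint positive open half-spaces are negatively proportional and that, under the equal-length constraint, this proportionality degenerates exactly to $\sigma\beta=-\beta$.
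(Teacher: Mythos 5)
Your proposal is correct, and its skeleton coincides with the paper's: both proofs reduce the assertion about the full orbit to the single line $\frak{p}^*_{\beta}$ by combining $B_K$-stability of that line, density of $K^{\sigma}_{\bb{C}}\cdot B_K$ in $K_{\bb{C}}$ (this is exactly where $(-\sigma)$-compatibility enters), $K^{\sigma}_{\bb{C}}$-equivariance of $\op{pr}$, and closedness of ${\cal N}({\frak{p}_\bb{C}^{\sigma}}^*)$; the paper packages this reduction as the collapsing map $\mu\colon K_{\bb{C}}\times_{B_K}\frak{p}^*_{\beta}\to\frak{p}^*_{\bb{C}}$ together with density of $(K^{\sigma}_{\bb{C}}\cdot B_K)/B_K$ in $K_{\bb{C}}/B_K$, which is your argument in bundle language. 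The genuine differences are in the pointwise steps. When $\sigma\beta=-\beta$, the paper replaces $x\in\frak{p}^*_{\beta}$ by $y=x+\overline{\sigma(x)}$ so that $\op{pr}(y)$ lands in the real form $\frak{p}^*$, hence is semisimple, nonzero, and therefore non-nilpotent; you instead compute $\kappa(\op{pr}(E_{\beta}),\op{pr}(E_{\beta}))\neq 0$ and invoke the isotropy of nilpotent elements with respect to any invariant symmetric form. Your version stays over $\bb{C}$ and avoids the conjugation trick, at the cost of making the Killing-form identifications explicit. When $\sigma\beta\neq-\beta$, the paper places $\frak{p}^*_{\beta}+\frak{p}^*_{\sigma\beta}$ inside the nilradical of a Borel subalgebra determined by $a$, while you contract $E_{\beta}+\sigma E_{\beta}$ to $0$ along $\exp(-t\,\op{ad}H)$; these are two faces of the same positivity argument, and you actually justify the existence of $H$ (equal length plus failure of negative proportionality), which the paper asserts without comment. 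One caveat, common to both proofs and hence no defect of yours relative to the source: passing between ``$E_{\beta}+\sigma E_{\beta}$ is nilpotent in $\frak{g}_{\bb{C}}$'' and ``$\op{pr}(E_{\beta})\in{\cal N}({\frak{p}_\bb{C}^{\sigma}}^*)$'' uses that, for $\sigma$-fixed elements, nilpotency in $\frak{g}_{\bb{C}}$ and in $\frak{g}^{\sigma}_{\bb{C}}$ agree; the paper's ``hence'' elides this exactly as your ``forcing it to be nilpotent'' does (it follows, for instance, from the existence of $\sigma$-stable $\frak{sl}_2$-triples through $\sigma$-fixed nilpotents).
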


\begin{proof}
Suppose that $\sigma\beta=-\beta$.
Take a non-zero vector $x\in\frak{p}^*_{\beta}$. 
Then $\sigma(x)\in \frak{p}^*_{-\beta}$, 
 $\overline{x}\in \frak{p}^*_{-\beta}$,
 and $\overline{\sigma(x)}\in \frak{p}^*_{\beta}$.
Here, $\overline{x}$ denotes the complex conjugate of $x$ with respect to
 the real form $\frak{g}$ of $\frak{g}_{\bb{C}}$.
Replacing $x$ by $cx$ $(c\in\bb{C})$ if necessary, we may assume that
 $y:=x+\overline{\sigma(x)}$ is non-zero.
Since $y\in\frak{p}^*_{\beta}$ and
 $\sigma(y)\in\frak{p}^*_{\sigma\beta}=\frak{p}^*_{-\beta}$, 
 the projection $\op{pr}(y)=\frac{1}{2}(y+\sigma (y))$ is non-zero.
We have moreover
\[\op{pr}(y)
=\frac{1}{2}(y+\sigma(y))
=\frac{1}{2}(x+\overline{x}+\sigma(x)+\overline{\sigma(x)})\in\frak{p}^*,\]
which is a semisimple element.
Therefore, $\op{pr}(y)\not\in {\cal N}({\frak{p}_\bb{C}^\sigma}^*)$ and 
hence $\op{pr}(K_\bb{C}\cdot\frak{p}^*_{\beta})\not\subset
{\cal N}({\frak{p}_\bb{C}^\sigma}^*)$.

Conversely, suppose that $\sigma\beta\neq -\beta$.
We can choose a vector $a\in \sqrt{-1}\frak{t}$
 such that $\beta(a)>0$ and $\sigma\beta(a)>0$.
This implies that
 the subspace $\frak{p}^*_{\beta}+\frak{p}^*_{\sigma\beta}$
 of $\frak{p}_{\bb{C}}^{\ast}$
 is contained in the nilradical of some Borel subalgebra of $\frak{g}_\bb{C}$.
In particular, 
 $\frak{p}^*_{\beta}+\frak{p}^*_{\sigma\beta}
 \subset{\cal N}({\frak{p}_\bb{C}^*})$
 and hence
 $\op{pr}(x)=\frac{1}{2}(x+\sigma(x))\in{\cal N}({\frak{p}_\bb{C}^\sigma}^*)$
 for $x\in\frak{p}_\beta^*$.
We regard $\frak{p}^*_\beta$ as a one-dimensional $B_K$-module and
 let $K_\bb{C}\times_{B_K}\frak{p}^*_{\beta}$ be 
 the $K_\bb{C}$-equivariant line bundle on the
 flag variety $K_\bb{C}/{B_K}$ with typical fiber $\frak{p}^*_{\beta}$.
Let $\mu: K_\bb{C}\times_{B_K}\frak{p}^*_{\beta} \to \frak{p}_\bb{C}^*$ be
 the map given by $[(k,x)]\mapsto k(x)$.  
Then, we have
 $\op{Image} \mu=K_\bb{C}\cdot\frak{p}^*_{\beta}$.
Let us consider the composition of the maps
\[K_\bb{C}\times_{B_K}\frak{p}^*_{\beta}
 \xrightarrow{\mu} \frak{p}_\bb{C}^*
 \xrightarrow{\op{pr}} {\frak{p}_\bb{C}^\sigma}^*.\]
Since
 $\op{pr}(x)\in{\cal N}({\frak{p}_\bb{C}^\sigma}^*)$
 for $x\in\frak{p}_\beta^*$
 and the composition $\op{pr}\circ\mu$ is $K_\bb{C}^\sigma$-equivariant,
 we have
 $\op{pr}\circ\mu([(k, x)])=k\cdot \op{pr}(x)
 \in{\cal N}({\frak{p}_\bb{C}^\sigma}^*)$
 for $k\in K_\bb{C}^\sigma$
 and $x\in\frak{p}_{\beta}^*$.
On the other hand, since we have chosen
 $\Delta^+(\frak{k}_\bb{C},\frak{t}_\bb{C})$
 to be $(-\sigma)$-compatible, 
 $(K_\bb{C}^\sigma\cdot B_K)/B_K$ is dense in $K_\bb{C}/B_K$.
Hence the subset
 $\{[(k, x)]: k\in K_\bb{C}^\sigma, x\in\frak{p}^*_{\beta}\}$
 is dense in $K_\bb{C}\times_{B_K}\frak{p}^*_{\beta}$.
We therefore have
\[\op{pr}(K_\bb{C}\cdot\frak{p}_\beta^*)=
\op{pr}\circ\mu(K_\bb{C}\times_{B_K}\frak{p}^*_{\beta})
\subset {\cal N}({\frak{p}_\bb{C}^\sigma}^*)\]
 because
 ${\cal N}({\frak{p}_\bb{C}^\sigma}^*)$ is closed in
 ${\frak{p}^\sigma_\bb{C}}^*$.
\end{proof}

\begin{prop}
\label{ddnec}
Let $X$ be an infinite-dimensional irreducible $(\frak{g},K)$-module.
If $X$ is discretely decomposable as a $(\frak{g}^\sigma,K^\sigma)$-module,
 then $\sigma\beta\neq -\beta$.
Here $\beta$ is the highest non-compact root given in Definition~\ref{beta}.
\end{prop}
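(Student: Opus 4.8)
The plan is to combine the necessary condition for discrete decomposability (Fact~\ref{nec}) with the orbit-theoretic information assembled earlier, reducing everything to the concrete computation already carried out in Lemma~\ref{projorbit}. First I would observe that since $X$ is infinite-dimensional and irreducible, Lemma~\ref{lem:ass}~(1) and (2) together with Lemma~\ref{lem:ass}~(3) guarantee that ${\cal V}_\frak{g}(X)$ is a nonzero $K_\bb{C}$-stable closed subvariety of ${\cal N}(\frak{p}^*_\bb{C})$. Being nonzero and $K_\bb{C}$-stable, its closure must contain at least one minimal $K_\bb{C}$-orbit in ${\cal N}(\frak{p}^*_\bb{C})$; by Proposition~\ref{minorbit} this forces $\overline{\bb{O}_{\rm min}}\subset{\cal V}_\frak{g}(X)$ in the non-Hermitian case, and at least one of $\overline{\bb{O}_{{\rm min},\pm}}\subset{\cal V}_\frak{g}(X)$ in the Hermitian case. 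In either situation we conclude that $K_\bb{C}\cdot\frak{p}^*_{\beta}\subset{\cal V}_\frak{g}(X)$ or $K_\bb{C}\cdot\frak{p}^*_{-\beta}\subset{\cal V}_\frak{g}(X)$.

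Next I would argue by contraposition. Suppose $\sigma\beta=-\beta$. The first half of the proof of Lemma~\ref{projorbit} shows precisely that under this hypothesis $\op{pr}(K_\bb{C}\cdot\frak{p}^*_{\beta})\not\subset{\cal N}({\frak{p}_\bb{C}^\sigma}^*)$; indeed there it is exhibited that $\op{pr}$ of a suitable vector in $\frak{p}^*_\beta$ is a nonzero semisimple element, hence not nilpotent. One checks that the hypothesis $\sigma\beta=-\beta$ is symmetric in $\pm\beta$, so the same conclusion holds for $K_\bb{C}\cdot\frak{p}^*_{-\beta}$ as well (alternatively, take complex conjugates with respect to $\frak{g}$, as in the final paragraph of Proposition~\ref{minorbit}). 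Therefore, whichever minimal orbit closure sits inside ${\cal V}_\frak{g}(X)$, its image under $\op{pr}$ escapes the nilpotent variety ${\cal N}({\frak{g}_\bb{C}^\sigma}^*)$.

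Finally I would invoke Fact~\ref{nec}: since ${\cal V}_\frak{g}(X)$ contains $K_\bb{C}\cdot\frak{p}^*_{\pm\beta}$ and $\op{pr}$ of this set is not contained in ${\cal N}({\frak{g}_\bb{C}^\sigma}^*)$, we get $\op{pr}({\cal V}_\frak{g}(X))\not\subset{\cal N}({\frak{g}_\bb{C}^\sigma}^*)$, contradicting discrete decomposability. Hence $\sigma\beta\neq-\beta$, which is the desired statement. The only point requiring a little care—and the step I expect to be the main obstacle—is the passage from ${\cal V}_\frak{g}(X)$ being merely nonzero to its containing a full minimal orbit closure; this rests on the minimality description in Proposition~\ref{minorbit}, and one must handle the Hermitian case honestly, where a priori only one of the two minimal orbits need appear. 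The symmetry of the condition $\sigma\beta=-\beta$ under $\beta\leftrightarrow-\beta$ is exactly what makes both cases yield the same conclusion, so no additional work is needed there.
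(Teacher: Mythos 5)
Your proposal is correct and follows essentially the same route as the paper: both deduce from Proposition~\ref{minorbit} that ${\cal V}_\frak{g}(X)$ contains $K_\bb{C}\cdot\frak{p}^*_{\beta}$ or $K_\bb{C}\cdot\frak{p}^*_{-\beta}$, then combine Fact~\ref{nec} with the first half of the proof of Lemma~\ref{projorbit} (applied to $\pm\beta$) to rule out $\sigma\beta=-\beta$. Your explicit remark that the hypothesis $\sigma\beta=-\beta$ is symmetric under $\beta\leftrightarrow-\beta$ is precisely the "similar argument" the paper invokes for the second case, so the two proofs coincide in substance.
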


\begin{proof}
The associated variety ${\cal V}_\frak{g}(X)$ is a non-zero
 $K_\bb{C}$-stable
 closed subset of $\frak{p}_\bb{C}^*$. 
By Proposition~\ref{minorbit},
 it follows that
 ${\cal V}_\frak{g}(X)\supset \overline{\bb{O}_{\rm min}}$
 if $\frak{g}$ is not of Hermitian type, and that 
 ${\cal V}_\frak{g}(X)\supset\overline{\bb{O}_{{\rm min}, +}}$ or 
 ${\cal V}_\frak{g}(X)\supset\overline{\bb{O}_{{\rm min}, -}}$
 if $\frak{g}$ is of Hermitian type.
In either case, we have
 ${\cal V}_\frak{g}(X)\supset K_\bb{C}\cdot \frak{p}^*_\beta$
 or 
 ${\cal V}_\frak{g}(X)\supset K_\bb{C}\cdot \frak{p}^*_{-\beta}$.
Hence 
 $\op{pr}(K_\bb{C}\cdot \frak{p}^*_\beta)\subset
 {\cal N}({\frak{p}_\bb{C}^\sigma}^*)$
 or
 $\op{pr}(K_\bb{C}\cdot \frak{p}^*_{-\beta})\subset
 {\cal N}({\frak{p}_\bb{C}^\sigma}^*)$
 by Fact~\ref{nec}.
For the former case,
 the claim $\sigma\beta\neq -\beta$
 follows from Lemma~\ref{projorbit}.
For the latter case, the claim can be proved
 by using an argument similar to
 the proof of Lemma~\ref{projorbit}. 
\end{proof}

The following lemma relates the asymptotic $K$-support
 to the associated variety
 of a $(\frak{g}, K)$-module. 

\begin{lem}
\label{askvar}
Let $X$ be a $(\frak{g},K)$-module of finite length.
Let ${\cal O}({\cal V}_\frak{g}(X))$ be the coordinate ring of
 the associated variety ${\cal V}_\frak{g}(X)$, 
 which is endowed
 with a natural $K_{\bb{C}}$-module structure and hence
 with a $K$-module structure through the morphism $K\to K_\bb{C}$.
Then we have
\[
\op{AS}_K(X)=\op{AS}_K({\cal O}({\cal V}_\frak{g}(X))).
\]
\end{lem}

\begin{proof}
This is immediate from Proposition~\ref{prop:assk}.
\end{proof}

\begin{lem}
\label{ask}
Let $\beta$ be as in Definition~\ref{beta}.  
Suppose $X$ is an irreducible $(\frak{g},K)$-module
 whose associated variety 
 ${\cal V}_\frak{g}(X)$ is equal to $K_\bb{C}\cdot\frak{p}_{\beta}^*$.
Then
\[
\op{AS}_K(X)=\bb{R}_{>0}(-w_0\beta)\equiv\{-sw_0\beta:s>0\},
\] where $w_0$ is
 the longest element of the Weyl group for
 $\Delta(\frak{k}_\bb{C},\frak{t}_\bb{C})$.
\end{lem}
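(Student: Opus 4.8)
The plan is to compute the asymptotic $K$-support of $X$ by replacing $X$ with the coordinate ring of its associated variety, which is legitimate by Lemma~\ref{askvar}. That is, since ${\cal V}_\frak{g}(X)=K_\bb{C}\cdot\frak{p}^*_\beta=\overline{\bb{O}_{\rm min}}$ (or $\overline{\bb{O}_{{\rm min},+}}$ in the Hermitian case), it suffices to determine $\op{AS}_K({\cal O}(K_\bb{C}\cdot\frak{p}^*_\beta))$. The advantage is that ${\cal O}$ of an affine cone is a graded $K$-module whose highest weights are directly computable from geometry, whereas the $K$-types of $X$ itself are inaccessible in general.

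The first step is to understand ${\cal O}(\overline{K_\bb{C}\cdot\frak{p}^*_\beta})$ as a $K_\bb{C}$-module via the Borel--Weil-type description. I would use the $K_\bb{C}$-equivariant resolution $\mu:K_\bb{C}\times_{B_K}\frak{p}^*_\beta\to\overline{K_\bb{C}\cdot\frak{p}^*_\beta}$ already introduced in the proof of Lemma~\ref{projorbit}. The total space is the line bundle over the flag variety $K_\bb{C}/B_K$ with fiber the one-dimensional $B_K$-module $\frak{p}^*_\beta$, on which $\frak{t}_\bb{C}$ acts by the weight $\beta$. Pulling back regular functions and taking global sections, the graded pieces of ${\cal O}$ are (up to the usual cohomology-vanishing and normality considerations) the spaces $H^0(K_\bb{C}/B_K,\,{\cal L}_{d\beta})$ for $d\geq 0$, where ${\cal L}_{d\beta}$ is the line bundle associated to $d\beta$. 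By the Borel--Weil theorem these global sections are nonzero precisely when $d\beta$ is dominant, and the resulting irreducible $K_\bb{C}$-module has \emph{lowest} weight $d\beta$, hence highest weight $-w_0(d\beta)=d(-w_0\beta)$.

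Granting this, the set of highest weights appearing in ${\cal O}$ is contained in $\{d(-w_0\beta):d\geq 0\}$, and the limit cone of these weights is exactly the ray $\bb{R}_{>0}(-w_0\beta)$, which gives the desired identity $\op{AS}_K(X)=\bb{R}_{>0}(-w_0\beta)$. The main obstacle, and the step requiring the most care, is the Borel--Weil identification of the graded pieces: one must justify that $\mu$ induces the claimed isomorphism on the level of coordinate rings, i.e. that $\overline{K_\bb{C}\cdot\frak{p}^*_\beta}$ is normal (so that ${\cal O}$ agrees with $\mu_*{\cal O}$ of the smooth resolution) and that the higher cohomology $H^{>0}(K_\bb{C}/B_K,{\cal L}_{d\beta})$ vanishes so that only the single highest-weight string $\{d(-w_0\beta)\}$ contributes. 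However, for the purpose of computing the \emph{asymptotic} support these subtleties can be sidestepped: since $\op{AS}_K$ depends only on the limit cone of highest weights, it is enough to show that (i) every $K$-type of ${\cal O}$ has highest weight lying in the cone $\bb{R}_{\geq 0}(-w_0\beta)$, which follows from the weight being bounded by the image of the $B_K$-weight $\beta$ under global sections, and (ii) the ray $\bb{R}_{>0}(-w_0\beta)$ is actually attained, which follows because $-w_0\beta$ itself (or a positive multiple) occurs as a highest weight of some graded piece. Combining (i) and (ii) pins down the cone exactly.
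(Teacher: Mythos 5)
Your proposal follows the paper's own proof essentially step for step: reduce to ${\cal O}({\cal V}_\frak{g}(X))$ via Lemma~\ref{askvar}, pull functions back along the $K_\bb{C}$-equivariant map $\mu:K_\bb{C}\times_{B_K}\frak{p}^*_\beta\to K_\bb{C}\cdot\frak{p}^*_\beta$ (the injectivity of $\mu^*$, which you invoke implicitly in step (i), is exactly how the paper sidesteps normality and higher-cohomology issues), and apply the Borel--Weil theorem to conclude that every $K$-type lies on the ray through $-w_0\beta$. The only blemish is a sign slip: the sections of the bundle attached to $d\beta$ form the dual module, with lowest weight $-d\beta$ rather than $d\beta$, which is what is consistent with the highest weight $-w_0(d\beta)=d(-w_0\beta)$ that you (correctly) state and use.
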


\begin{proof}
Put $Z:=K_\bb{C}\cdot\frak{p}^*_{\beta}$. 
Let $S(\frak{p}_\bb{C})$ be the symmetric algebra of $\frak{p}_\bb{C}$, 
 which is identified with the space of regular functions on $\frak{p}_\bb{C}^*$. 
Write $I\subset S(\frak{p}_\bb{C})$
 for the defining ideal of $Z$ and
 write ${\cal O}(Z)$ for the coordinate ring of $Z$
 so that ${\cal O}(Z)\simeq S(\frak{p}_\bb{C})/I$.
By Lemma~\ref{askvar}, 
 it is enough to prove that
\begin{align}
\op{AS}_K({\cal O}(Z))= \bb{R}_{>0}(-w_0\beta).
\label{askoy}
\end{align}
Let $\mu: K_\bb{C}\times_{B_K}\frak{p}^*_{\beta} \to \frak{p}_\bb{C}^*$ be
 the map as in the proof of Lemma~\ref{projorbit}.
Since $\mu$ maps onto $Z$, the pull-back map
 $\mu^* :{\cal O}(Z) \to {\cal O}(K_\bb{C}\times_{B_K}\frak{p}^*_{\beta})$
 is injective.
As a representation of $B_K$,
 the contragredient representation of $\frak{p}^*_{\beta}$
 is isomorphic to $\bb{C}_{-\beta}$, the character of $B_K$
 corresponding to $-\beta\in\frak{t}_\bb{C}^*$. 
Therefore the regular functions
 ${\cal O}(K_\bb{C}\times_{B_K}\frak{p}^*_{\beta})$
are identified with the regular sections of the vector bundle
 $K_\bb{C}\times_{B_K}S(\bb{C}_{-\beta})$ on $K_\bb{C}/B_K$, 
where $S(\bb{C}_{-\beta})$ is the symmetric tensor of
 $\bb{C}_{-\beta}$.
By the Borel--Weil theorem, the space of regular sections of 
 $K_\bb{C}\times_{B_K}S^n(\bb{C}_{-\beta})$ is irreducible
 as a $K$-module and has highest weight $-nw_0\beta$. 
Hence (\ref{askoy}) follows.
\end{proof}

\begin{thm}
\label{geldd}
Let $G$ be a non-compact connected simple Lie group
 and suppose that $X$ is an infinite-dimensional
 irreducible $(\frak{g},K)$-module having the smallest Gelfand--Kirillov
 dimension, namely $\op{DIM}(X)=m(\frak{g})$.
Then $X$ is discretely decomposable as a $(\frak{g}^\sigma,K^\sigma)$-module
 if and only if $\sigma\beta\neq -\beta$, where $\beta$ is
 the highest non-compact root given
 in Definition~\ref{beta}.
\end{thm}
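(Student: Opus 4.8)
The plan is to prove the two implications separately, leveraging the fact that $\op{DIM}(X) = m(\frak{g})$ pins down the associated variety ${\cal V}_\frak{g}(X)$ precisely via the characterization stated just after Fact~\ref{fact:assann}. Under this hypothesis, ${\cal V}_\frak{g}(X)$ must equal $\overline{\bb{O}_{\rm min}}$ (non-Hermitian case) or one of $\overline{\bb{O}_{{\rm min}, +}}$, $\overline{\bb{O}_{{\rm min}, -}}$, or their union (Hermitian case). In all cases this means ${\cal V}_\frak{g}(X)$ is a closure of a minimal $K_\bb{C}$-orbit, i.e.\ $K_\bb{C}\cdot \frak{p}^*_\beta$ or $K_\bb{C}\cdot \frak{p}^*_{-\beta}$.

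\medskip

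For the forward direction (\emph{discretely decomposable} $\Rightarrow$ $\sigma\beta \neq -\beta$), I would simply invoke Proposition~\ref{ddnec}, which already establishes exactly this implication for \emph{any} infinite-dimensional irreducible $(\frak{g},K)$-module, with no hypothesis on the Gelfand--Kirillov dimension. So this direction is immediate and requires no new argument.

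\medskip

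For the converse (\emph{$\sigma\beta \neq -\beta$} $\Rightarrow$ discretely decomposable), the plan is to apply the sufficient condition of Fact~\ref{suff}, which reduces the problem to showing the disjointness of asymptotic $K$-supports: $\op{AS}_K(X) \cap \sqrt{-1}(\frak{t}^\sigma)^\perp = \emptyset$. First I would use the fact that $\op{DIM}(X) = m(\frak{g})$ forces ${\cal V}_\frak{g}(X)$ to be $K_\bb{C}\cdot\frak{p}^*_\beta$ or $K_\bb{C}\cdot\frak{p}^*_{-\beta}$ (or, in the Hermitian case with the full union, I would treat the two minimal orbits together). Then Lemma~\ref{ask} computes $\op{AS}_K(X) = \bb{R}_{>0}(-w_0\beta)$ (and symmetrically $\bb{R}_{>0}(w_0\beta)$ for the $-\beta$ orbit). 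The remaining task is to verify that the ray $\bb{R}_{>0}(-w_0\beta)$ avoids $\sqrt{-1}(\frak{t}^\sigma)^\perp$. Since $\sqrt{-1}(\frak{t}^\sigma)^\perp$ is precisely the subspace of $\sqrt{-1}\frak{t}^*$ annihilated by $\frak{t}^\sigma$, and the $(-\sigma)$-compatibility of the positive system ensures $w_0$ interacts suitably with $\sigma$, I expect that $-w_0\beta$ lies in this subspace if and only if $\beta$ does, which in turn should be equivalent to $\sigma\beta = -\beta$.

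\medskip

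The main obstacle, I anticipate, is the final step: carefully translating the condition $\sigma\beta \neq -\beta$ into the geometric statement that the asymptotic $K$-support ray misses $\sqrt{-1}(\frak{t}^\sigma)^\perp$. This hinges on understanding how $\sigma$ acts on $\frak{t}^* = (\frak{t}^\sigma)^* + (\frak{t}^{-\sigma})^*$ and relating the condition $-w_0\beta \in \sqrt{-1}(\frak{t}^\sigma)^\perp$ (equivalently, $w_0\beta$ supported on $\frak{t}^{-\sigma}$) back to $\sigma\beta = -\beta$. I would exploit the $(-\sigma)$-compatibility of $\Delta^+(\frak{k}_\bb{C}, \frak{t}_\bb{C})$, which controls how $w_0$ and the restriction to $\frak{t}^{-\sigma}$ cooperate, and the fact that $\sigma$ preserves $\frak{k}$ and permutes noncompact roots. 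The Hermitian case requires slight extra care because ${\cal V}_\frak{g}(X)$ may be the union of both minimal orbit closures, so I would check that the disjointness holds for each of the two candidate rays $\bb{R}_{>0}(\pm w_0\beta)$ under the hypothesis $\sigma\beta\neq-\beta$, and conclude discrete decomposability from Fact~\ref{suff} in every case.
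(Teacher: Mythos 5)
Your skeleton coincides with the paper's own proof: necessity is exactly Proposition~\ref{ddnec}, and sufficiency is Fact~\ref{suff} after computing $\op{AS}_K(X)$ from the associated variety via Lemmas~\ref{askvar} and \ref{ask}. The genuine gap is the step you yourself flag as the main obstacle and then leave as ``I expect that $-w_0\beta$ lies in this subspace if and only if $\beta$ does.'' That equivalence is precisely the nontrivial content of the sufficiency direction, and it does not follow by formal manipulation: $\sigma$ need not commute with $w_0$, and $-\sigma$ does not preserve $\Delta^+(\frak{k}_\bb{C},\frak{t}_\bb{C})$ in general (it flips the positive roots vanishing on $\frak{t}^{-\sigma}$), so some real input is required. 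The paper supplies it geometrically: since $w_0$ has a representative in $K_\bb{C}$, one has $K_\bb{C}\cdot\frak{p}^*_\beta=K_\bb{C}\cdot\frak{p}^*_{w_0\beta}$; the hypothesis $\sigma\beta\neq-\beta$ and Lemma~\ref{projorbit} then give $\op{pr}(K_\bb{C}\cdot\frak{p}^*_{w_0\beta})\subset{\cal N}({\frak{p}_\bb{C}^\sigma}^*)$, and running the semisimple-element argument from the first half of the proof of Lemma~\ref{projorbit} with $w_0\beta$ in place of $\beta$ excludes $\sigma w_0\beta=-w_0\beta$. (Alternatively, one can prove your expectation combinatorially by showing that $(-\sigma)$-compatibility forces $w_0$ to normalize $\frak{t}^{-\sigma}$, hence also $\frak{t}^\sigma$, so that $\lambda|_{\frak{t}^\sigma}=0$ if and only if $(w_0\lambda)|_{\frak{t}^\sigma}=0$; but that too is a lemma needing proof, not a formality.) Without one of these arguments your proof is incomplete at its decisive point.

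A smaller inaccuracy: your ``symmetric'' application of Lemma~\ref{ask} to the orbit $K_\bb{C}\cdot\frak{p}^*_{-\beta}$ yields the wrong ray. Lemma~\ref{ask} concerns the orbit through a \emph{highest} weight space; writing $K_\bb{C}\cdot\frak{p}^*_{-\beta}=K_\bb{C}\cdot\frak{p}^*_{-w_0\beta}$ and noting that $-w_0\beta$ is the highest weight of $\frak{p}^*_-$, the correct conclusion is $\op{AS}_K=\bb{R}_{>0}(-w_0(-w_0\beta))=\bb{R}_{>0}\beta$, not $\bb{R}_{>0}(w_0\beta)$ (an antidominant direction, which cannot occur as an asymptotic $K$-support). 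This is why the paper obtains $\op{AS}_K(X)=\bb{R}_{>0}(-w_0\beta)\cup\bb{R}_{>0}\beta$ in the Hermitian case with both minimal orbits. The slip happens to be harmless for the final check, since $\sqrt{-1}(\frak{t}^\sigma)^\perp$ is a linear subspace and so contains $w_0\beta$ exactly when it contains $-w_0\beta$; but with the correct rays the picture is cleaner: avoidance of $\sqrt{-1}(\frak{t}^\sigma)^\perp$ by $\bb{R}_{>0}\beta$ is literally the hypothesis $\sigma\beta\neq-\beta$, while avoidance by $\bb{R}_{>0}(-w_0\beta)$ is exactly the missing claim discussed above.
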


\begin{proof}
The \lq{only if}\rq\ part follows from Proposition~\ref{ddnec}.  

Conversely,
 suppose that $\sigma\beta\neq -\beta$.
We then have $\sigma w_0\beta\neq -w_0\beta$, where $w_0$ is the longest
 element of the Weyl group for $\Delta(\frak{k}_\bb{C},\frak{t}_\bb{C})$.
Indeed, since $K_\bb{C}\cdot\frak{p}_\beta=K_\bb{C}\cdot\frak{p}_{w_0\beta}$,
 Lemma~\ref{projorbit} shows
 $\op{pr}(K_\bb{C}\cdot\frak{p}_{w_0\beta})
\subset {\cal N}({\frak{p}_\bb{C}^\sigma}^*)$.
Then by using an argument similar to Lemma~\ref{projorbit} 
 we can prove that
 $\sigma w_0\beta\neq -w_0\beta$.
We prove the \lq{if}\rq\ part of the theorem
 in the case where $\frak{g}$ is of Hermitian type and
 ${\cal V}_{\frak{g}}(X)=\overline{\bb{O}_{{\rm min}, +}}
 \cup\overline{\bb{O}_{{\rm min}, -}}$.
Then as in the proof of Lemma~\ref{ask}, 
 we see that
 $\op{AS}_K(X)=\bb{R}_{> 0}(-w_0\beta)\cup\bb{R}_{> 0}\beta$.
Therefore, $\sigma\beta\neq -\beta$ and
 $\sigma w_0\beta\neq -w_0\beta$ imply that
 $\op{AS}_K(X)\cap \sqrt{-1}(\frak{t}^{\sigma})^\perp=\emptyset$.
Hence the theorem in this case follows from Fact~\ref{suff}.
The proof is similar for other cases. 
\end{proof}

For most non-compact simple Lie groups $G$, 
 there exist $(\frak{g},K)$-modules satisfying
 the assumption of Theorem~\ref{geldd} 
 (by replacing $G$ with a covering group of $G$
 if necessary).
However, 
 if $G$ is $SO_0(p,q)$ ($p+q$ : odd, $p,q \ge 4$) 
 or its covering group, then no
 irreducible $(\frak{g},K)$-module $X$
 satisfies
 $\op{DIM}(X)=m(\frak{g})$
 (see \cite{Vo81}).

A typical example of $(\frak{g},K)$-modules $X$
 that satisfy the assumption of Theorem~\ref{geldd}
 is a minimal representation.

\begin{de}
\label{minrep}
{\rm
Suppose that $G$ is a simple Lie group
 without complex structure.  
This means
 that the complexified Lie algebra
 $\frak{g}_\bb{C}$ is still 
 a simple Lie algebra.
An irreducible $(\frak{g},K)$-module $X$
 is said to be a {\it minimal representation} of $G$
 if the annihilator of the $U(\frak{g}_\bb{C})$-module $X$
 is the Joseph ideal of $U(\frak{g}_\bb{C})$ (\cite{Jos}).
}
\end{de}

By the definition of the Joseph ideal, we have:

\begin{prop}
\label{mingel}
Let $G$ be a connected simple Lie group without complex structure.
Suppose that $X$ is a minimal representation of $G$.
Then
\begin{align*}
{\cal V}_\frak{g}(X) =
\begin{cases}
 \overline{\bb{O}_{\rm min}} &
 \text{if $\frak{g}$ is not of Hermitian type,}\\
 \overline{\bb{O}_{{\rm min}, +}},
 \overline{\bb{O}_{{\rm min}, -}},
 {\text \ or\ } 
 \overline{\bb{O}_{{\rm min}, +}}
 \cup  \overline{\bb{O}_{{\rm min}, -}}&
 \text{if $\frak{g}$ is of Hermitian type.}
\end{cases}
\end{align*}
\end{prop}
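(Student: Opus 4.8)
The plan is to read off ${\cal V}_\frak{g}(X)$ by passing through the nilpotent $(\op{Int}\frak{g}_\bb{C})$-orbit attached to the annihilator of $X$ and then intersecting with $\frak{p}_\bb{C}^*$. The two external inputs are the defining property of the Joseph ideal and Vogan's theorem (Fact~\ref{fact:assann}); the remainder is bookkeeping with Lemma~\ref{orbitcap} and Proposition~\ref{minorbit}. I would sandwich ${\cal V}_\frak{g}(X)$ between an upper and a lower bound and check that, in each of the two regimes, the two bounds leave only the asserted possibilities.

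First I would establish the upper bound. Since $X$ is a minimal representation, $\op{Ann} X$ is the Joseph ideal, and by Joseph's characterization its associated variety is the closure of the minimal nilpotent coadjoint orbit; that is, ${\cal V}_\frak{g}(U(\frak{g}_\bb{C})/\op{Ann} X)=\overline{\bb{O}_{{\rm min},\bb{C}}}$, so the orbit $\bb{O}_\bb{C}$ of Fact~\ref{fact:assann} is exactly $\bb{O}_{{\rm min},\bb{C}}$. Fact~\ref{fact:assann}(1) then gives ${\cal V}_\frak{g}(X)\subset\overline{\bb{O}_{{\rm min},\bb{C}}}\cap\frak{p}_\bb{C}^*$. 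A minimal representation is infinite-dimensional, so ${\cal V}_\frak{g}(X)\ne\{0\}$ by Lemma~\ref{lem:ass}(2); in particular $\bb{O}_{{\rm min},\bb{C}}\cap\frak{p}_\bb{C}^*\ne\emptyset$, which rules out Case~(1) of Lemma~\ref{orbitcap}. Using $\overline{\bb{O}_{{\rm min},\bb{C}}}=\bb{O}_{{\rm min},\bb{C}}\cup\{0\}$ together with Lemma~\ref{orbitcap}, the ambient intersection equals $\overline{\bb{O}_{\rm min}}$ when $\frak{g}$ is not of Hermitian type, and $\overline{\bb{O}_{{\rm min},+}}\cup\overline{\bb{O}_{{\rm min},-}}$ when $\frak{g}$ is of Hermitian type.

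For the lower bound, I would recall that ${\cal V}_\frak{g}(X)$ is a nonzero $K_\bb{C}$-stable closed subset of ${\cal N}(\frak{p}_\bb{C}^*)$, so Proposition~\ref{minorbit} forces it to contain $\overline{\bb{O}_{\rm min}}$ in the non-Hermitian case and at least one of $\overline{\bb{O}_{{\rm min},+}}$, $\overline{\bb{O}_{{\rm min},-}}$ in the Hermitian case. In the non-Hermitian case the two bounds coincide and ${\cal V}_\frak{g}(X)=\overline{\bb{O}_{\rm min}}$. In the Hermitian case, each $\overline{\bb{O}_{{\rm min},\pm}}=\bb{O}_{{\rm min},\pm}\cup\{0\}$ is a single orbit together with the origin, so the ambient set $\overline{\bb{O}_{{\rm min},+}}\cup\overline{\bb{O}_{{\rm min},-}}$ has exactly three nonzero closed $K_\bb{C}$-stable subsets; the lower bound then pins ${\cal V}_\frak{g}(X)$ down to one of $\overline{\bb{O}_{{\rm min},+}}$, $\overline{\bb{O}_{{\rm min},-}}$, or their union, as claimed.

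The argument is almost entirely formal once the machinery is in place, and the only genuinely external step---hence the main point to get right---is the identification $\bb{O}_\bb{C}=\bb{O}_{{\rm min},\bb{C}}$, which is precisely Joseph's defining property of his ideal as the (unique completely prime) primitive ideal whose associated variety is the minimal nilpotent orbit closure. I expect no analytic obstacle; the one place to be careful is the Hermitian case, where one must either verify the small combinatorial statement about closed $K_\bb{C}$-stable subsets above or, alternatively, shortcut it by invoking Fact~\ref{fact:assann}(3), which directly identifies the $K_\bb{C}$-orbits of maximal dimension in ${\cal V}_\frak{g}(X)$ with those occurring in $\bb{O}_{{\rm min},\bb{C}}\cap\frak{p}_\bb{C}^*$.
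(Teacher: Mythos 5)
Your proposal is correct and follows essentially the same route as the paper, whose (very terse) proof likewise identifies ${\cal V}_\frak{g}(U(\frak{g}_\bb{C})/\op{Ann}X)$ with $\overline{\bb{O}_{{\rm min},\bb{C}}}$ via the Joseph ideal and then cites Lemma~\ref{orbitcap} and Fact~\ref{fact:assann}. Your substitution of Proposition~\ref{minorbit} for the lower bound is only a cosmetic variation (and matches how the paper argues elsewhere, e.g.\ in Proposition~\ref{ddnec}), and you already note the Fact~\ref{fact:assann}(3) shortcut that the paper's citation presumably intends.
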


\begin{proof}
Let $J$ be the Joseph ideal of $U(\frak{g}_\bb{C})$,
 which implies that ${\cal V}_\frak{g}(U(\frak{g}_\bb{C})/J)
 = \overline{\bb{O}_{{\rm min}, \bb{C}}}$.
Here $\bb{O}_{{\rm min}, \bb{C}}$ is
 the minimal nilpotent
 $(\op{Int} \frak{g}_\bb{C})$-orbit
 in $\frak{g}_\bb{C}^*$.
Then the proposition follows from
 Lemma~\ref{orbitcap} and Fact~\ref{fact:assann}.
\end{proof}

\begin{rem}
{\rm
Actually, we can sharpen Proposition~\ref{mingel} slightly: 
if $G$ is a connected simple Lie group of Hermitian type
 and $X$ is a minimal representation of $G$, then
 ${\cal V}_\frak{g} (X)$ is either $\overline{\bb{O}_{{\rm min}, +}}$
 or $\overline{\bb{O}_{{\rm min}, -}}$.
This is deduced from the following fact \cite{Vo91}:
 if $\bb{O}$ is a $K_\bb{C}$-orbit in
 ${\cal N}(\frak{p}_\bb{C}^*)$ and if
 $\overline{\bb{O}}$ is an irreducible component of
 ${\cal V}_\frak{g}(X)$,
 then at least one of the following two conditions 
 holds:
\begin{itemize}
\item
 ${\cal V}_\frak{g}(X)=\overline{\bb{O}}$, 
\item
 $\overline{\bb{O}}\setminus \bb{O}$ has codimension one
 in $\overline{\bb{O}}$.
\end{itemize}
}
\end{rem}

As a special case of Theorem~\ref{geldd}, 
 we obtain a criterion 
 for discrete decomposability
 of the restriction of minimal representations.

\begin{cor}
\label{mindd}
Let $G$ be a connected simple Lie group
 without complex structure.
Suppose that $G$ has a minimal representation $X$.
Then $X$ is discretely decomposable
 as a $(\frak{g}^\sigma, K^\sigma)$-module
 if and only if $\sigma\beta\neq -\beta$.
Here $\beta$ is the highest non-compact root given in Definition~\ref{beta}.
\end{cor}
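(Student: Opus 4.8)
The plan is to deduce this as a direct specialization of Theorem~\ref{geldd}: I would verify that a minimal representation $X$ satisfies all the hypotheses of that theorem, namely that $X$ is an infinite-dimensional irreducible $(\frak{g},K)$-module with $\op{DIM}(X)=m(\frak{g})$, and then invoke the equivalence stated there.

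First I would record that $X$ is infinite-dimensional. By Proposition~\ref{mingel}, the associated variety ${\cal V}_\frak{g}(X)$ equals $\overline{\bb{O}_{\rm min}}$ when $\frak{g}$ is not of Hermitian type, and one of $\overline{\bb{O}_{{\rm min},+}}$, $\overline{\bb{O}_{{\rm min},-}}$, or their union when $\frak{g}$ is of Hermitian type; in every case ${\cal V}_\frak{g}(X)$ is non-zero, so Lemma~\ref{lem:ass}~(2) forces $\dim X=\infty$ (in particular $G$ is non-compact, so Theorem~\ref{geldd} is applicable). Irreducibility is part of the definition of a minimal representation in Definition~\ref{minrep}.

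Next I would compute the Gelfand--Kirillov dimension. Since $\op{DIM}(X)$ is by definition $\dim{\cal V}_\frak{g}(X)$, the description of ${\cal V}_\frak{g}(X)$ furnished by Proposition~\ref{mingel} gives $\op{DIM}(X)=m(\frak{g})$ immediately in the non-Hermitian case. In the Hermitian case one must note that $\overline{\bb{O}_{{\rm min},+}}$ and $\overline{\bb{O}_{{\rm min},-}}$ have the same dimension by Proposition~\ref{minorbit}~(2), so that whichever of the three possibilities occurs, the dimension is still $m(\frak{g})$, matching the definition of $m(\frak{g})$.

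With $X$ now known to be an infinite-dimensional irreducible $(\frak{g},K)$-module attaining the minimal Gelfand--Kirillov dimension $m(\frak{g})$, I would apply Theorem~\ref{geldd} verbatim to conclude that $X$ is discretely decomposable as a $(\frak{g}^\sigma,K^\sigma)$-module if and only if $\sigma\beta\neq-\beta$. There is essentially no obstacle at this final step; the only substantive input is that a minimal representation achieves the minimum of the Gelfand--Kirillov dimension, and this is precisely the content of Proposition~\ref{mingel}, which in turn rests on the Joseph ideal having associated variety $\overline{\bb{O}_{{\rm min},\bb{C}}}$ together with Lemma~\ref{orbitcap} and Fact~\ref{fact:assann}. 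Thus the corollary is a clean consequence of the machinery already in place.
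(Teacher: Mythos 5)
Your proposal is correct and matches the paper's own route exactly: the paper presents Corollary~\ref{mindd} precisely as a special case of Theorem~\ref{geldd}, with Proposition~\ref{mingel} (resting on the Joseph ideal, Lemma~\ref{orbitcap}, and Fact~\ref{fact:assann}) supplying the fact that ${\cal V}_\frak{g}(X)$ is the closure of minimal orbit(s), hence $\op{DIM}(X)=m(\frak{g})$. Your additional checks (infinite-dimensionality via Lemma~\ref{lem:ass}~(2), equidimensionality of $\overline{\bb{O}_{{\rm min},\pm}}$ via Proposition~\ref{minorbit}~(2)) are exactly the details the paper leaves implicit.
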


\begin{rem}
{\rm
The converse statement of Proposition~\ref{mingel}
 is not true in general.
\begin{enumerate}
\item[(1)]
Let $G=SL(n,{\mathbb{R}})$.
The Joseph ideal of $U(\frak{g}_\bb{C})$ is not defined
 for $\frak{g}_\bb{C}=\frak{sl}(n,\bb{C})$, 
 but
 there exists an irreducible $(\frak {g}, K)$-module $X$
 isomorphic to the underlying $(\frak{g},K)$-module
 of some degenerate principal series representation such that
 ${\cal V}_\frak{g}(X)=\overline{\bb{O}_{\rm min}}$.
\item[(2)]
Let $G=Sp(m,n)$. Then $\bb{O}_{\rm min, \bb{C}}$ does not
 intersect with $\frak{p}^*_\bb{C}$
 (see Corollary~\ref{complex}).
From Fact~\ref{fact:assann},
 there exists no minimal representation of $G$.
However, there exists an irreducible
 $(\frak{g},K)$-module $X$ isomorphic to
 some $A_\frak{q}(\lambda)$ such that
 ${\cal V}_\frak{g}(X)=\overline{\bb{O}_{\rm min}}$.
\item[(3)]
If $X$ is a minimal representation,
 then any infinite-dimensional $(\frak{g},K)$-module
 in its coherent family has the same associated variety
 as $X$.
However, most of them are not a minimal representation
 because a minimal representation must have a fixed
 infinitesimal character.
\end{enumerate}
Theorem~\ref{geldd} can be applied to these representations
 as well.
}
\end{rem}


\section{Classification}
\label{sec:class}

In this section
 we assume $G$ to be a non-compact
 connected simple Lie group.  
Let $K$ be the connected subgroup of $G$
 associated to a Cartan decomposition $\frak{g}=\frak{k}+\frak{p}$.
The Cartan involution $\theta$
 is chosen to satisfy $\sigma \theta =\theta \sigma$
 and the positive system
 $\Delta^+(\frak{k}_\bb{C},\frak{t}_\bb{C})$
 is chosen to be $(-\sigma)$-compatible
 if an involutive automorphism $\sigma$
 of $G$ is given.  

\begin{de}
\label{de:holpair}
{\rm
Let $\frak{g}$ be a non-compact real simple Lie algebra
 and $(\frak{g}, \frak{g}^\sigma)$ a symmetric pair.
We say $(\frak{g}, \frak{g}^\sigma)$ is of {\it holomorphic type}
 if $\frak{g}$ is of Hermitian type and the center $\frak{z}_K$
 of $\frak{k}$ is contained in $\frak{g}^\sigma$, 
 or equivalently,
 $\sigma$ induces
 a holomorphic involution
 on the Hermitian symmetric space $G/K$.
}
\end{de}

For example, 
 the symmetric pairs $(\frak{sp}(n,\bb{R}), \frak{u}(m,n-m))$ and
 $(\frak{sp}(n,\bb{R}), \frak{sp}(m,\bb{R})\oplus\frak{sp}(n-m,\bb{R}))$
 are of holomorphic type for any $m$ and $n$,
 whereas the symmetric pair $(\frak{sp}(n,\bb{R}), \frak{gl}(n,\bb{R}))$
 is not of holomorphic type. 

Here is the main result of this paper:

\begin{thm}[classification]
\label{thm:ddpair}
Let $\frak{g}$ be a non-compact real simple Lie algebra
 and $(\frak{g}, \frak{g}^\sigma)$ a symmetric pair.
The following three conditions on the symmetric pair
 $(\frak{g}, \frak{g}^\sigma)$
 are equivalent:
\begin{enumerate}
\item[{\rm(i)}]
There exists an infinite-dimensional irreducible $(\frak{g},K)$-module $X$
 (by replacing $G$ with a covering group of $G$ if necessary) 
 such that $X$
 is discretely decomposable as a $(\frak{g}^\sigma,K^\sigma)$-module.
\item[{\rm(ii)}] $\sigma\beta\neq -\beta$
 ($\beta$ is the highest non-compact root given in Definition~\ref{beta}).
\item[{\rm(iii)}] The pair $(\frak{g}, \frak{g}^\sigma)$ satisfies one of the
 following.
\begin{enumerate}
\item[{\rm(a)}] $\sigma$ is a Cartan involution,
 i.e.\ $\frak{g}^\sigma=\frak{k}$.
\item[{\rm(b)}] $(\frak{g}, \frak{g}^\sigma)$ is of holomorphic type
 (see \cite[Table 2]{KoOs} for a classification
 of symmetric pairs of holomorphic type).
\item[{\rm(c)}] The pair $(\frak{g},\frak{g}^\sigma)$
 appears in Table~\ref{ddpairs}
 (up to isomorphisms).  
\end{enumerate}
\end{enumerate}
\end{thm}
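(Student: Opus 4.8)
The plan is to establish the equivalence by proving the cycle (i) $\Rightarrow$ (ii) $\Rightarrow$ (iii) $\Rightarrow$ (i), leaning on the associated-variety machinery already developed. First I would handle (i) $\Rightarrow$ (ii): this is immediate from Proposition~\ref{ddnec}, since any infinite-dimensional irreducible $(\frak{g},K)$-module that is discretely decomposable as a $(\frak{g}^\sigma,K^\sigma)$-module forces $\sigma\beta\neq-\beta$. The subtlety here is only the phrase ``by replacing $G$ with a covering group of $G$,'' but since $\beta$, $\sigma$, and the decomposition $\frak{p}_{\bb C}=\frak{p}_++\frak{p}_-$ all depend only on the Lie algebra data, passing to a cover does not affect condition (ii), so Proposition~\ref{ddnec} applies verbatim.

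The reverse direction (ii) $\Rightarrow$ (i) is where the representation theory does real work, and I expect it to be the main technical content. Given $\sigma\beta\neq-\beta$, I would produce an explicit infinite-dimensional irreducible $(\frak{g},K)$-module $X$ realizing discrete decomposability. The natural candidate is a module of minimal Gelfand--Kirillov dimension, $\op{DIM}(X)=m(\frak{g})$, so that Theorem~\ref{geldd} converts the hypothesis $\sigma\beta\neq-\beta$ directly into discrete decomposability. When $\frak{g}$ has a minimal representation (Definition~\ref{minrep}), Corollary~\ref{mindd} finishes immediately. The genuine obstacle is the exceptional family flagged in the text after Theorem~\ref{geldd}: for $G=SO_0(p,q)$ with $p+q$ odd and $p,q\ge 4$ (and its covers) no irreducible module attains $\op{DIM}(X)=m(\frak{g})$, so Theorem~\ref{geldd} is simply unavailable. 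For these cases I would instead fall back on Fact~\ref{suff} directly, constructing some other small unitarizable module (e.g. a suitable $A_{\frak q}(\lambda)$ or a degenerate principal series constituent, as in the Remark following Corollary~\ref{mindd}) whose asymptotic $K$-support $\op{AS}_K(X)$ I can compute via Lemma~\ref{askvar} and Lemma~\ref{ask}, and then verify $\op{AS}_K(X)\cap\sqrt{-1}(\frak{t}^\sigma)^\perp=\emptyset$ using $\sigma\beta\neq-\beta$ together with $\sigma w_0\beta\neq-w_0\beta$. Covering groups enter precisely to guarantee the existence of the requisite genuine representations in the Hermitian cases.

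The equivalence (ii) $\Leftrightarrow$ (iii) is essentially a finite computation: condition (ii) is a purely combinatorial statement about how the involution $\sigma$ acts on the single weight $\beta\in\sqrt{-1}\frak{t}^*$, and I would check it case by case against Berger's classification of reductive symmetric pairs. The strategy is to fix a $(-\sigma)$-compatible positive system and track $\sigma\beta$ through each Berger pair. Two sufficient structural conditions are transparent: if $\sigma$ is a Cartan involution (case (a)) then $\sigma=\theta$ acts trivially on $\frak{t}\subset\frak{k}$, so $\sigma\beta=\beta\neq-\beta$; and if $(\frak{g},\frak{g}^\sigma)$ is of holomorphic type (case (b)), then $\sigma$ preserves $\frak{z}_K$ and hence preserves the splitting $\frak{p}_{\bb C}=\frak{p}_++\frak{p}_-$, so $\sigma\beta$ remains a weight of $\frak{p}_+^*$ and cannot equal $-\beta$, which lives in $\frak{p}_-^*$. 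The remaining content is to verify that every other pair satisfying $\sigma\beta\neq-\beta$ is exactly the list in Table~\ref{ddpairs}, and conversely that the pairs \emph{not} on any of the lists (a), (b), (c) all satisfy $\sigma\beta=-\beta$. I expect this enumeration, rather than the representation-theoretic step, to be the most labor-intensive part, and it is where the holomorphic-type classification in \cite{KoOs} and the explicit root-system data for each simple $\frak{g}$ are indispensable.
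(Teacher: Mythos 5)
Your skeleton matches the paper's proof quite closely: (i) $\Rightarrow$ (ii) is indeed exactly Proposition~\ref{ddnec}, and your remark that passing to a covering group does not affect (ii) is the right point. Your handling of (ii) $\Leftrightarrow$ (iii) is also essentially what the paper does: your direct arguments for cases (a) and (b) (a Cartan involution fixes $\frak{t}$, and a holomorphic-type $\sigma$ fixes $\frak{z}_K$, hence $\sigma\beta\neq-\beta$) coincide with the paper's, while for the remaining pairs the paper does not redo the enumeration over Berger's list from scratch; instead it observes (Lemma~\ref{ddnothol}) that since $-\sigma$ preserves the set of weights of $\frak{p}_{\bb C}^*$ (resp.\ $\frak{p}_+^*$) and $\beta$ is the highest weight, condition (ii) is equivalent to ``$-\sigma\beta$ is not dominant,'' a condition whose classification was already carried out in \cite{KoOs}. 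Your plan to track $\sigma\beta$ case by case through the Berger classification is legitimate but duplicates that prior work. For the construction step, your idea of using Theorem~\ref{geldd} uniformly is in some respects cleaner than the paper's case split: it covers the Cartan, holomorphic, minimal-representation, $\frak{sp}(m,n)$, and complex-$\frak{g}$ cases (the last via a module with annihilator the Joseph ideal) in one stroke, whereas the paper treats (a), (b), and the rows of Table~\ref{ddpairs} by separate devices.

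There is, however, a genuine gap at precisely the exceptional family you flag. The pairs $(\frak{so}(m,n),\frak{so}(m,k)\oplus\frak{so}(n-k))$ with $m+n$ odd and $m,n\geq 4$ do occur in Table~\ref{ddpairs}, and for these no irreducible $(\frak{g},K)$-module attains $\op{DIM}(X)=m(\frak{g})$ (by \cite{Vo81}), so Theorem~\ref{geldd} is unavailable. Your fallback is to take some $A_{\frak q}(\lambda)$ or degenerate principal series and compute its asymptotic $K$-support ``via Lemma~\ref{askvar} and Lemma~\ref{ask}.'' But Lemma~\ref{ask} applies only to modules with ${\cal V}_{\frak{g}}(X)=K_{\bb C}\cdot\frak{p}_\beta^*$, i.e.\ precisely to modules of minimal Gelfand--Kirillov dimension, which are the modules that do not exist here. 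As stated, your fallback is therefore circular: the one tool you cite for computing $\op{AS}_K(X)$ is exactly the tool the obstruction removes. To close this you must either compute $\op{AS}_K({\cal O}(\overline{\bb O}))$ for a larger nilpotent orbit closure $\overline{\bb O}={\cal V}_{\frak g}(A_{\frak q}(\lambda))$ (a Borel--Weil-type argument generalizing Lemma~\ref{ask}, which appears nowhere in the paper), or do what the paper actually does: invoke the classification in \cite[Table 3 and Table 4]{KoOs} of triples $(\frak{g},\frak{g}^\sigma,\frak{q})$ for which $A_{\frak q}(\lambda)$ is discretely decomposable, which covers exactly these $\frak{so}(m,n)$ pairs (and several other rows of the table). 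Finally, note that your primary route also carries implicit existence burdens --- e.g.\ for $(\frak{sl}(2n,\bb R),\frak{sl}(n,\bb C)\oplus\frak{u}(1))$ and $(\frak{sl}(2n,\bb R),\frak{sp}(n,\bb R))$ one needs an irreducible module with associated variety $\overline{\bb O_{\rm min}}$, which the paper supplies concretely as a degenerate principal series induced from the maximal parabolic with Levi $S(GL(2n-1,\bb R)\times GL(1,\bb R))$ --- so these case-by-case existence statements should be made explicit inputs of your plan rather than assumed.
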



\begin{table}[!h]
\begin{center}
\begin{tabular}{cccc}
\hline
\rule[-5pt]{0pt}{7pt}
$\qquad\frak{g}\qquad$&$\qquad\frak{g}^\sigma\qquad$
&minimal&$A_\frak{q}(\lambda)$\\
\hline

\rule[0pt]{0pt}{12pt}
$\frak{sl}(2n,\mathbb{R})$&{
\rule[-5pt]{0pt}{5pt}
$\frak{sl}(n,\mathbb{C})\oplus \frak{u}(1),
 \quad \frak{sp}(n,\mathbb{R})$}
 &&$n=2$\\
\hline

\rule[0pt]{0pt}{12pt}
$\frak{su}(2m,2n)$&{
\rule[-5pt]{0pt}{5pt}
$\frak{sp}(m,n)$}&&$\bigcirc$\\
\hline

\rule[0pt]{0pt}{12pt}
$\frak{so}(m,n)$&{
\rule[-5pt]{0pt}{5pt}
$\frak{u}(\frac{m}{2},\frac{n}{2})$}&$(*)$&$\bigcirc$\\
&{\rule[-5pt]{0pt}{5pt}
$\frak{so}(m,k)\oplus\frak{so}(n-k)\ (m> 1)$}&$(*)$&$\bigcirc$\\ 
\hline

\rule[0pt]{0pt}{12pt}
$\frak{sp}(2n,\mathbb{R})$&{
\rule[-5pt]{0pt}{5pt}
$\frak{sp}(n,\mathbb{C})$}&$\bigcirc$&$n=1$\\ 
\hline

\rule[0pt]{0pt}{12pt}
$\frak{sp}(m,n)$&{
\rule[-5pt]{0pt}{5pt}
$\frak{sp}(k,l)\oplus\frak{sp}(m-k,n-l)$}&&$\bigcirc$\\ 
\hline

\rule[0pt]{0pt}{12pt}
$\frak{sl}(2n,\mathbb{C})$&{
\rule[-5pt]{0pt}{5pt}
$\frak{sp}(n,\mathbb{C}), \quad \frak{su}^*(2n)$}&&$\bigcirc$\\
\hline

\rule[0pt]{0pt}{12pt}
$\frak{so}(n,\mathbb{C})$&{
\rule[-5pt]{0pt}{5pt}
$\frak{so}(n-1,\mathbb{C}), \quad \frak{so}(n-1,1)$}\quad $(n\geq 5)$
&&$n$ : even\\ 
\hline

\rule[0pt]{0pt}{12pt}
$\frak{sp}(n,\mathbb{C})$&{
\rule[-5pt]{0pt}{5pt}
$\frak{sp}(k,\mathbb{C})\oplus\frak{sp}(n-k,\mathbb{C}),
\quad \frak{sp}(k,n-k)$}&&\\ 
\hline

\rule[0pt]{0pt}{12pt}
$\frak{f}_{4(4)}$&{
\rule[-5pt]{0pt}{5pt}
$\frak{sp}(2,1)\oplus \frak{su}(2), \quad \frak{so}(5,4)$}
&$\bigcirc$&$\bigcirc$\\ 
\hline

\rule[0pt]{0pt}{12pt}
$\frak{f}_{4(-20)}$&{
\rule[-5pt]{0pt}{5pt}
$\frak{so}(8,1)$}&&$\bigcirc$\\
\hline

\rule[0pt]{0pt}{12pt}
$\frak{e}_{6(6)}$&{
\rule[-5pt]{0pt}{5pt}
$\frak{su}^*(6)\oplus\frak{su}(2),\quad \frak{f}_{4(4)}$}
&$\bigcirc$&\\
\hline

\rule[0pt]{0pt}{12pt}
$\frak{e}_{6(2)}$&{
\rule[-5pt]{0pt}{5pt}\ \ 
$\frak{so}(6,4)\oplus \frak{so}(2),\quad \frak{su}(4,2)\oplus \frak{su}(2)$}
&$\bigcirc$&$\bigcirc$\\ 
&{\rule[-5pt]{0pt}{5pt}\ \ 
$\frak{sp}(3,1),\quad \frak{f}_{4(4)}$}
&$\bigcirc$&$\bigcirc$\\ 
&{\rule[-5pt]{0pt}{5pt}\ \ 
$\frak{so}^*(10)\oplus \frak{so}(2)$}
&$\bigcirc$&$\bigcirc$\\
\hline

\rule[0pt]{0pt}{12pt}
$\frak{e}_{6(-14)}$&{
\rule[-5pt]{0pt}{5pt}
$\frak{f}_{4(-20)}$}
&$\bigcirc$&$\bigcirc$\\
\hline

\rule[0pt]{0pt}{12pt}
$\frak{e}_{7(7)}$&{
\rule[-5pt]{0pt}{5pt}
$\frak{so}^*(12)\oplus\frak{su}(2),\quad \frak{e}_{6(2)}\oplus\frak{so}(2)$}
&$\bigcirc$&\\
\hline

\rule[0pt]{0pt}{12pt}
$\frak{e}_{7(-5)}$&{
\rule[-5pt]{0pt}{5pt}
$\frak{su}(6,2),\quad \frak{e}_{6(2)}\oplus\frak{so}(2)$}
&$\bigcirc$&$\bigcirc$\\
&{\rule[-5pt]{0pt}{5pt}
$\frak{so}(8,4)\oplus\frak{su}(2)$}
&$\bigcirc$&$\bigcirc$\\
&{\rule[-5pt]{0pt}{5pt}
$\frak{e}_{6(-14)}\oplus\frak{so}(2)$}
&$\bigcirc$&$\bigcirc$\\
\hline

\rule[0pt]{0pt}{12pt}
$\frak{e}_{8(8)}$&{
\rule[-5pt]{0pt}{5pt}
$\frak{e}_{7(-5)}\oplus\frak{su}(2)$}
&$\bigcirc$&\\
\hline

\rule[0pt]{0pt}{12pt}
$\frak{e}_{8(-24)}$&{
\rule[-5pt]{0pt}{5pt}
$\frak{so}(12,4),\quad \frak{e}_{7(-5)}\oplus\frak{su}(2)$}
&$\bigcirc$&$\bigcirc$\\
\hline

\rule[0pt]{0pt}{12pt}
$\frak{f}_{4}^{\mathbb{C}}$&{
\rule[-5pt]{0pt}{5pt}
$\frak{so}(9,\mathbb{C}),\quad \frak{f}_{4(-20)}$}&&\\
\hline

\rule[0pt]{0pt}{12pt}
$\frak{e}_{6}^{\mathbb{C}}$&{
\rule[-5pt]{0pt}{5pt}
$\frak{f}_{4}^{\mathbb{C}},\quad \frak{e}_{6(-26)}$}&&\\
\hline

\end{tabular}
\end{center}
\caption{}
\label{ddpairs}
\end{table}

\begin{rem}
\rm{
In Table~\ref{ddpairs},
 a symmetric pair and its associated pair
 are listed in the same row.  
For example, we list two symmetric pairs 
 $(\frak{sl}(2n,\bb{R}), \frak{sl}(n,\bb{C})\oplus\frak{u}(1))$,
 $(\frak{sl}(2n,\bb{R}), \frak{sp}(n,\bb{R}))$ in the first row
 and one is the associated pair of the other.
In the second row, only one symmetric pair
 $(\frak{su}(2m,2n),\frak{sp}(m,n))$ is listed.
This means that the pair $(\frak{su}(2m,2n),\frak{sp}(m,n))$
 is self-associated.
}
\end{rem}

\begin{rem}
\label{remtable}
{\rm
Here is a guidance to the notation 
 used in Table~\ref{ddpairs}.
\begin{enumerate}
\item[(1)]
The circle $\bigcirc$ below \lq\lq{minimal}\rq\rq\ means
 that there exists a minimal representation
 for some Lie group $G$
 with Lie algebra ${\mathfrak {g}}$.  
For these pairs in Table~\ref{ddpairs},
 a minimal representation $X$ is discretely decomposable
 as a $(\frak {g}^{\sigma},K^{\sigma})$-module
 by Corollary~\ref{mindd}, 
 and thus the condition (i) is fulfilled.  

The asterisk ($\ast$) for $\frak {g}=\frak {so}(m,n)$ reflects
 the fact
 that the existence of minimal representations
 depends on the parameters $m$ and $n$:
 there exists a minimal representation
 for some Lie group $G$
 with Lie algebra $\frak{so}(m,n)$
 if and only if $(m,n)$ satisfies
 one of the following.
\begin{itemize}
\item
$m+n$ is even, $m,n\geq 2$, and $m+n\geq 8$.
\item
$(m,n)=(3,2l), (2l,3)$ for $l\geq 2$.
\item
$(m,n)=(2,2l+1), (2l+1,2)$ for $l\geq 1$.
\end{itemize}
\item[(2)]
The circle $\bigcirc$ below
 ``$A_\frak{q}(\lambda)$" means
 that there exists a $\theta$-stable parabolic subalgebra
 $\frak{q}$ ($\ne \frak{g}_\bb{C}$)
 such that the Zuckerman derived functor modules 
 $A_\frak{q}(\lambda)$ 
 are discretely decomposable
 as $(\frak{g}^{\sigma},K^{\sigma})$-modules.
\item[(3)]
For real exceptional Lie algebras,
 we follow the notation of \cite[Chapter X]{hel}.
\end{enumerate}
}
\end{rem}

\begin{rem}
{\rm
We did not intend to make the conditions (a), (b), and (c)
 in Theorem~\ref{thm:ddpair} to be exclusive with one another.
For example, the pair
 $(\frak{so}(m, n), \frak{u}(\frac{m}{2},\frac{n}{2}))$ is
 of holomorphic type if $m=2$.
}
\end{rem}

Before giving a proof of Theorem~\ref{thm:ddpair},
 we prepare the following:
\begin{lem}
\label{ddnothol}
Let $\frak{g}$ be a non-compact real simple Lie algebra.  
Assume that the symmetric pair $(\frak{g}, \frak{g}^\sigma)$
 is not of holomorphic type.  
Then the following three conditions
 on  $\sigma$ are equivalent:
\begin{enumerate}
\item[{\rm(i)}] $\sigma\beta\neq -\beta$.
\item[{\rm(ii)}]
$-\sigma \beta$ is not dominant 
 with respect to $\Delta^+(\frak{k}_{\bb C}, \frak{t}_{\bb C})$.  
\item[{\rm(iii)}] 
The pair $(\frak{g}, \frak{g}^\sigma)$ satisfies (a) or (b).
\begin{enumerate}
\item[{\rm(a)}] $\sigma$ is a Cartan involution,
  i.e.\ $\frak{g}^\sigma=\frak{k}$.
\item[{\rm(b)}]  The pair $(\frak{g},\frak{g}^\sigma)$
 appears in Table~\ref{ddpairs}
 (up to isomorphisms).  
\end{enumerate}
\end{enumerate}
\end{lem}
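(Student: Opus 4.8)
The plan is to prove $(i)\Leftrightarrow(ii)$ by an intrinsic weight argument that uses no classification, and then to establish $(i)\Leftrightarrow(iii)$ by translating $(i)$ into a concrete condition on $\beta$ and checking it, family by family, against Berger's classification \cite{ber} of involutive automorphisms. The conceptual content is entirely in the first equivalence; the bulk of the labour, and the main obstacle, is the completeness of Table~\ref{ddpairs} needed for the second.

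\emph{The equivalence $(i)\Leftrightarrow(ii)$.} Since $\sigma$ preserves the $\sigma$-stable Cartan subalgebra $\frak{t}$ and acts orthogonally on $\sqrt{-1}\frak{t}^*$ with respect to the Killing form, and since $\sigma$ permutes the $\frak{t}$-weight spaces of $\frak{p}_\bb{C}^*$, both $\sigma\beta$ and $-\sigma\beta$ are weights of an \emph{irreducible} $K_\bb{C}$-module with highest weight $\beta$, of the same length as $\beta$. When $\frak{g}$ is not of Hermitian type this module is $\frak{p}_\bb{C}^*$ itself, and $-\sigma\beta$ is a weight by the self-duality of $\frak{p}_\bb{C}^*$ under the Killing form. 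When $\frak{g}$ is of Hermitian type I would first use the hypothesis that $(\frak{g},\frak{g}^\sigma)$ is not of holomorphic type: as $\frak{z}_K$ is one-dimensional and $\sigma$-stable, $\sigma$ acts by $-1$ on $\frak{z}_K$ (the value $+1$ would force $\frak{z}_K\subset\frak{g}^\sigma$, i.e.\ holomorphic type), so $\sigma$ interchanges $\frak{p}_+$ and $\frak{p}_-$; hence $\sigma\beta$ is a weight of $\frak{p}_-^*$ and $-\sigma\beta$ one of the irreducible module $\frak{p}_+^*$. In either case I invoke the standard fact that the weights of maximal length in an irreducible representation are exactly the Weyl-group orbit $W\beta$ of its highest weight ($W=W(\frak{k}_\bb{C},\frak{t}_\bb{C})$). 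Since $-\sigma\beta$ has the same length as $\beta$ it lies in $W\beta$, whose unique dominant member is $\beta$. Thus $-\sigma\beta$ is dominant if and only if $-\sigma\beta=\beta$, i.e.\ if and only if $\sigma\beta=-\beta$; equivalently $(i)$ fails exactly when $(ii)$ fails.

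\emph{The equivalence $(i)\Leftrightarrow(iii)$.} First I reformulate $(i)$ concretely: writing $\frak{t}=\frak{t}^\sigma\oplus\frak{t}^{-\sigma}$ and $\beta=\beta^\sigma+\beta^{-\sigma}$ accordingly, one has $\sigma\beta=-\beta$ iff $\beta^\sigma=0$, so $(i)$ is equivalent to $\beta|_{\frak{t}^\sigma}\neq0$; by Lemma~\ref{projorbit} this is also the geometric statement $\op{pr}(K_\bb{C}\cdot\frak{p}^*_\beta)\subset{\cal N}({\frak{p}_\bb{C}^\sigma}^*)$, which serves as a cross-check. For the Cartan involution $\sigma=\theta$ we have $\frak{t}^\sigma=\frak{t}$, so $\beta|_{\frak{t}^\sigma}=\beta\neq0$ and $(i)$ holds; this is case (a). For every other involution I would run through Berger's list of symmetric pairs $(\frak{g},\frak{g}^\sigma)$. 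For each pair I identify $\beta$ — the highest weight of the isotropy representation $\frak{p}_\bb{C}^*$, which for $\frak{g}$ of complex type is the highest root of the compact form acting through the adjoint representation — realize $\sigma$ explicitly (conjugation by a torus element, possibly composed with a diagram automorphism, or a conjugate-linear structure in the complex-type cases), read off the induced action on $\frak{t}$, and decide whether $\beta^\sigma=0$. The pairs with $\beta^\sigma\neq0$ are precisely the entries of Table~\ref{ddpairs}, which together with (a) yields $(iii)$.

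The main obstacle is exactly the \emph{completeness} of Table~\ref{ddpairs}: one must check that \emph{every} non-holomorphic, non-Cartan involution omitted from the table satisfies $\sigma\beta=-\beta$, and that each listed pair genuinely has $\beta^\sigma\neq0$. This is a finite but delicate audit across all real forms — the classical series, the exceptional real forms (with the notation of \cite{hel}), and the complex-type algebras whose $\frak{g}^\sigma$ may be a real form, where $\sigma$ is conjugate-linear on the adjoint representation and the relation $\sigma\beta=-\beta$ is easiest to overlook. The computation is halved by treating a pair $(\frak{g},\frak{g}^\sigma)$ together with its associated pair $(\frak{g},\frak{g}^{\theta\sigma})$: since $\theta$ is trivial on $\frak{t}\subset\frak{k}$ one has $\frak{t}^{\theta\sigma}=\frak{t}^\sigma$ and hence the same $\beta^\sigma$, so the two share the truth value of $(i)$ — which is why they occupy a single row of Table~\ref{ddpairs}.
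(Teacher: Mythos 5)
Your proof of (i) $\Leftrightarrow$ (ii) is correct and is essentially the paper's own argument: the hypothesis that $(\frak{g},\frak{g}^\sigma)$ is not of holomorphic type forces $\sigma=-1$ on $\frak{z}_K$ in the Hermitian case, so in either case $-\sigma\beta$ is a weight of the irreducible $K_\bb{C}$-module with highest weight $\beta$ (namely $\frak{p}_\bb{C}^*$ or $\frak{p}_+^*$), of the same length as $\beta$; hence it lies in the Weyl orbit $W\beta$, whose unique dominant element is $\beta$. The reformulation of (i) as $\beta|_{\frak{t}^\sigma}\neq 0$ and the observation that the associated pair $(\frak{g},\frak{g}^{\theta\sigma})$ shares the truth value of (i) (since $\theta$ is trivial on $\frak{t}\subset\frak{k}$) are also correct.

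The gap is in (i) $\Leftrightarrow$ (iii), and it is exactly the step you flag as ``the main obstacle'': the case-by-case audit of Berger's list is never performed, not even for a single family, and the assertion that ``the pairs with $\beta^\sigma\neq 0$ are precisely the entries of Table~\ref{ddpairs}'' is the conclusion to be proved, not an argument for it. As it stands your proposal establishes only (i) $\Leftrightarrow$ (ii). Note that the paper does not carry out this audit from scratch either: it quotes the classification already established in \cite[Appendix B.1]{KoOs}, which says that for a non-holomorphic pair the weight $-\sigma\beta$ is dominant if and only if $\frak{k}^\sigma+\sqrt{-1}\frak{k}^{-\sigma}$ is a split real form of $\frak{k}_\bb{C}$ or the pair appears in that appendix; Table~\ref{ddpairs} is then obtained as the set-theoretic complement of these pairs inside Berger's classification \cite{ber}. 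So the equivalence (ii) $\Leftrightarrow$ (iii) reduces to comparing two finite lists rather than recomputing $\sigma\beta$ for every involution. To close your gap you would either have to invoke that prior classification, as the paper does, or actually execute the family-by-family computation of $\beta|_{\frak{t}^\sigma}$ that you only describe.
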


\begin{proof}
(i) $\Leftrightarrow$ (ii) \enspace
We set 
\[
  V:=\begin{cases}
     \frak{p}_\bb{C}^* \qquad
     &\text{if $\frak{g}$ is not of Hermitian type}, 
     \\
     \frak{p}_+^*       
     &\text{if $\frak{g}$ is of Hermitian type.  }
     \end{cases}
\]
Then $K_{\bb{C}}$ acts irreducibly on $V$ 
 and $\beta$ is the highest weight of $V$.  
We claim that the set $\Delta(V, \frak{t}_\bb{C})$
 of weights is preserved by $-\sigma$.  
In fact, 
 if $\frak{g}$ is not of Hermitian type,
 $\sigma \frak{p}_{\bb{C}}^*= \frak{p}_{\bb{C}}^*$ 
and hence
$
  \sigma(\Delta(\frak{p}_{\bb{C}}^*, \frak{t}_{\bb{C}}))
    = \Delta(\frak{p}_{\bb{C}}^*, \frak{t}_{\bb{C}})
    = - \Delta(\frak{p}_{\bb{C}}^*, \frak{t}_{\bb{C}}).  
$
If $\frak{g}$ is of Hermitian type,
 let $\frak{z}_K$ be the center of $\frak{k}$.
Then $\sigma(z)=-z$ for $z\in\sqrt{-1}\frak{z}_K$
 because $(\frak{g},\frak{g}^\sigma)$
 is not of holomorphic type.
Hence $\sigma \frak{p}_+^*=\frak{p}_-^*$
 and 
$\sigma(\Delta(\frak{p}_+^*, \frak{t}_{\bb{C}}))
 =\Delta(\frak{p}_-^*, \frak{t}_{\bb{C}})
 =-\Delta(\frak{p}_+^*, \frak{t}_{\bb{C}})$.  
Thus $-\sigma(\Delta(V, \frak{t}_{\bb{C}}))
 =\Delta(V, \frak{t}_{\bb{C}})$
 in either case. 

Since $\beta$, $-\sigma \beta \in \Delta(V, \frak{t}_{\bb{C}})$
 are of the same length, 
 $-\sigma \beta$ is dominant 
 if and only if $-\sigma\beta$ coincides
 with the highest weight $\beta$
 of the irreducible representation $V$.  
Hence the equivalence (i) $\Leftrightarrow$ (ii) is proved.  

(ii) $\Leftrightarrow$ (iii)\enspace
We recall that a classification
 of symmetric pairs
 with $-\sigma\beta$ dominant 
 was carried out in \cite{KoOs}.
In the case that $(\frak{g},\frak{g}^\sigma)$ is
 a symmetric pair not 
 of holomorphic type, 
 the weight $-\sigma\beta$ is dominant
 if and only if
 the real form $\frak{k}^\sigma+\sqrt{-1}\frak{k}^{-\sigma}$
 is split 
 or $(\frak{g},\frak{g}^\sigma)$ is one of those listed
 in \cite[Appendix B.1]{KoOs}.
Consequently, $(\frak{g},\frak{g}^\sigma)$
 satisfies $-\sigma\beta\neq\beta$ if and only if the following two
 conditions hold:
\begin{itemize}
\item
 $\frak{k}^\sigma+\sqrt{-1}\frak{k}^{-\sigma}$
 is not a split real form of $\frak{k}_\bb{C}$;
\item
 $(\frak{g},\frak{g}^\sigma)$ is not listed in \cite[Appendix B.1]{KoOs}.
\end{itemize}
Table~\ref{ddpairs} is obtained 
 as the complementary subset
 of these pairs
 in all the symmetric pairs with $\frak{g}$ simple
 (not of holomorphic type), 
 for which the classification
 was established
 earlier by M.~Berger \cite{ber}.  
Hence
 the equivalence (ii) $\Leftrightarrow$ (iii) is proved.
\end{proof}

We are ready to prove Theorem~\ref{thm:ddpair}.  

\begin{proof}[Proof of Theorem~\ref{thm:ddpair}]

(i) $\Rightarrow$ (ii) \enspace
This is Proposition~\ref{ddnec}.

(ii) $\Leftrightarrow$ (iii) \enspace
If the pair $(\frak{g},\frak{g}^\sigma)$ is of holomorphic type, 
 then we can take a non-zero element
 $z$ in the center $\frak{z}_K$ of $\frak{k}$ and 
 we have $\beta(z)\neq 0$.
Since $\sigma$ acts as the identity on $\frak{z}_K$,
 it follows that
 $(\sigma \beta)(z)=\beta(\sigma (z))=\beta(z)$
 and hence $-\sigma\beta\neq \beta$.

If the pair $(\frak{g},\frak{g}^\sigma)$ is not of holomorphic type,
 our assertion follows from Lemma~\ref{ddnothol}.

(iii) $\Rightarrow$ (i) \enspace
To prove this implication,
 we have to find a discretely decomposable
 $(\frak{g},K)$-module $X$.
If $\frak{g}^\sigma=\frak{k}$,
 then any irreducible $(\frak{g},K)$-module
 is discretely decomposable as a $(\frak{g}^\sigma, K^\sigma)$-module.
If $(\frak{g},\frak{g}^\sigma)$ is of holomorphic type,
 then $\frak{g}$ is of Hermitian type and there exist
 infinite-dimensional highest weight $(\frak{g},K)$-modules.
It is known that 
 any highest weight $(\frak{g},K)$-module
 is discretely decomposable as a $(\frak{g}^\sigma, K^\sigma)$-module
 if $(\frak{g},\frak{g}^\sigma)$ is of holomorphic type
 (see \cite[Theorem 7.4]{kob98iii}).

Suppose that the pair $(\frak{g},\frak{g}^\sigma)$
 is isomorphic to one of those listed in Table~\ref{ddpairs}.
We give three sufficient conditions for (i):
\begin{enumerate}
\item[(1)]
There exists a minimal representation $X$
 for some connected covering group of $G$.
\item[(2)]
$\frak{g}$ is a complex Lie algebra
 and $\frak{g}\not\simeq\frak{sl}(n,\bb{C})$.
\item[(3)]
There exists a $\theta$-stable parabolic subalgbra
 $\frak{q} (\neq \frak{g}_\bb{C})$
 such that
 the Zuckerman derived functor modules $A_\frak{q}(\lambda)$
 are discretely decomposable
 as $(\frak{g}^\sigma, K^\sigma)$-modules.
\end{enumerate}

(1) is satisfied
 for $\frak{g}=\frak{so}(m,n)$ with a certain condition on $m,n$
 (see Remark~\ref{remtable}), 
 $\frak{g}=\frak{sp}(2n,\bb{R})$, $\frak{f}_{4(4)}$, $\frak{e}_{6(6)}$,
 $\frak{e}_{6(2)}$, $\frak{e}_{6(-14)}$,
 $\frak{e}_{7(7)}$, $\frak{e}_{7(-5)}$, $\frak{e}_{8(8)}$,
 $\frak{e}_{8(-24)}$ (see \cite{To}).
Then by Theorem~\ref{mindd}, a minimal representation $X$ 
 is discretely decomposable as a $(\frak{g}^\sigma,K^\sigma)$-module.

If (2) holds, then
 put $X=U(\frak{g})/J$, where $J$
 is the Joseph ideal of $U(\frak{g})$.
We can regard $X$ as a $(\frak{g},K)$-module
 (sometimes referred to as a Harish-Chandra bimodule) and
 we have that ${\cal V}_\frak{g}(X)$
 is the closure of the minimal nilpotent
 $K_\bb{C}$-orbit in $\frak{p}^*_\bb{C}$.
Hence Theorem~\ref{geldd} shows that 
 $X$ is discretely decomposable as a $(\frak{g}^\sigma,K^\sigma)$-module.

By the classification \cite[Table 3 and Table 4]{KoOs}, 
(3) is satisfied 
 for the pairs in Table~\ref{ddpairs}
 with $\frak{g}=\frak{sl}(4,\bb{R})$, 
 $\frak{su}(2m,2n)$, $\frak{so}(m,n)$, $\frak{sp}(2,\bb{R})$, $\frak{sp}(m,n)$,
 $\frak{sl}(2n,\bb{C})$, $\frak{so}(2n,\bb{C})$, $\frak{f}_{4(4)}$,
 $\frak{f}_{4(-20)}$, $\frak{e}_{6(2)}$, $\frak{e}_{6(-14)}$,
 $\frak{e}_{7(-5)}$, $\frak{e}_{8(-24)}$.

The only remaining pairs that are not covered by (1), (2) and (3)
 are
 $(\frak{g},\frak{g}^\sigma)
=(\frak{sl}(2n,\bb{R}), \frak{sl}(n,\bb{C})\oplus\frak{u}(1))$
 and $(\frak{sl}(2n,\bb{R}), \frak{sp}(n,\bb{R}))$.
In this case, let $G=SL(2n,\bb{R})$ and $P$
 a maximal parabolic subgroup of $G$ with
 Levi part $L=S(GL(2n-1,\bb{R})\times GL(1,\bb{R}))$.
Let $X$ be the underlying $(\frak{g},K)$-module of 
 a degenerate principal series representation 
 of $G$ induced from a character of $P$.
Then it turns out that
 $\op{AS}_K(X)=\bb{R}_{> 0}(-w_0\beta)$ and hence 
 $X$ is discretely decomposable
 as a $(\frak{g}^\sigma, K^\sigma)$-module by the criterion
 given in Fact~\ref{suff}.

Thus we have found at least one
 discretely decomposable $(\frak{g},K)$-module for
 all the pairs $(\frak{g},\frak{g}^\sigma)$ in Table~\ref{ddpairs}.
This completes the proof of the theorem.
\end{proof}

\begin{rem}
\rm{
Concrete branching laws are given in
 \cite{KOP}
for the last two cases 
 in the proof above,
 that is, 
$(\frak{g},\frak{g}^\sigma)
=(\frak{sl}(2n,\bb{R}), \frak{sl}(n,\bb{C})\oplus\frak{u}(1))$
 and $(\frak{sl}(2n,\bb{R}), \frak{sp}(n,\bb{R}))$.  
}
\end{rem}

From the proof of Theorem~\ref{thm:ddpair},
 we can take $X$ in the condition (i) of Theorem~\ref{thm:ddpair}
 to be unitarizable.

\begin{cor}[unitarizable $X$]
\label{unitary}
In the setting of Theorem~\ref{thm:ddpair},
 the conditions ${\rm (i)}$, ${\rm (ii)}$, and ${\rm (iii)}$
 are also equivalent to ${\rm (i')}$.
\begin{enumerate}
\item[${\rm (i')}$]
 there exists an infinite-dimensional
 irreducible unitarizable $(\frak{g},K)$-module $X$
 (by replacing $G$ with a covering group of $G$
 if necessary)
 such that
 $X$ is discretely decomposable
 as a $(\frak{g}^\sigma,K^\sigma)$-module.
\end{enumerate}
\end{cor}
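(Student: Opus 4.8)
The plan is to establish the one new implication ${\rm (iii)}\Rightarrow{\rm (i')}$; the reverse ${\rm (i')}\Rightarrow{\rm (i)}$ is immediate, since a unitarizable module is in particular a $(\frak{g},K)$-module, and the equivalences among ${\rm (i)}$, ${\rm (ii)}$, ${\rm (iii)}$ are already proved in Theorem~\ref{thm:ddpair}. Accordingly I would revisit the construction used for ${\rm (iii)}\Rightarrow{\rm (i)}$ and verify that in each of the cases (a), (b), (c) the discretely decomposable module can be chosen unitarizable.

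In case (a), where $\frak{g}^\sigma=\frak{k}$, the restriction to $K^\sigma=K$ has finite multiplicities, so every irreducible $(\frak{g},K)$-module is discretely decomposable; as $G$ is noncompact it carries infinite-dimensional irreducible unitary representations (for instance any nontrivial tempered representation), and any such furnishes the required $X$. In case (b), of holomorphic type, I would take an infinite-dimensional \emph{unitary} highest weight $(\frak{g},K)$-module, e.g.\ a holomorphic discrete series; by \cite[Theorem 7.4]{kob98iii} it is discretely decomposable, and it is unitarizable by construction.

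The substance lies in case (c), the pairs of Table~\ref{ddpairs}. Here I would follow the three sufficient conditions (1), (2), (3) in the proof of Theorem~\ref{thm:ddpair} and promote each to a unitary statement. When a minimal representation is available (the mark $\bigcirc$ under ``minimal''), it is known to be unitarizable and, by Corollary~\ref{mindd}, discretely decomposable. When an $A_\frak{q}(\lambda)$ is discretely decomposable (the mark $\bigcirc$ under ``$A_\frak{q}(\lambda)$''), I would take $\lambda$ in the good range, so that $A_\frak{q}(\lambda)$ is unitarizable by the Vogan--Zuckerman theory while discrete decomposability, depending only on $\frak{q}$, is retained. For the two residual real pairs with $\frak{g}=\frak{sl}(2n,\bb{R})$, the earlier proof used a degenerate principal series induced from the maximal parabolic $P$ with $\dim(G/P)=2n-1=m(\frak{g})$; inducing instead from a \emph{unitary} character of $P$ yields a unitary representation whose infinite-dimensional irreducible constituents all satisfy $\op{DIM}(X)=m(\frak{g})$, and Theorem~\ref{geldd} makes each such constituent discretely decomposable exactly because $\sigma\beta\neq-\beta$ by ${\rm (ii)}$.

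The main obstacle is condition (2), the complex cases $\frak{g}\in\{\frak{so}(n,\bb{C}),\frak{sp}(n,\bb{C}),\frak{f}_4^{\bb{C}},\frak{e}_6^{\bb{C}}\}$, where the module $U(\frak{g})/J$ used in Theorem~\ref{thm:ddpair} is a Harish-Chandra bimodule with no evident unitary structure. To resolve this I would replace it by the minimal unitary (unipotent) representation of the complex group $G$: for complex simple $G$ not of type $A$ such a representation exists, is unitarizable, and satisfies $\op{DIM}(X)=m(\frak{g})$, whence Theorem~\ref{geldd} again yields discrete decomposability. In every case the sole input beyond Theorem~\ref{geldd} is the existence of a \emph{small} unitary module, supplied by the known part of the unitary dual, so that no fresh construction of representations is needed.
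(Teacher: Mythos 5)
Your proposal is correct and takes essentially the same approach as the paper: both reduce the new implication to checking, case by case, that each module constructed in the proof of Theorem~\ref{thm:ddpair} can be chosen unitarizable --- holomorphic discrete series for holomorphic type, unitarizable minimal representations, good-range $A_\frak{q}(\lambda)$, unitarily induced degenerate principal series, and, for complex $\frak{g}$, the minimal unitary representation, which is exactly the module $U(\frak{g})/J$ whose unitarizability the paper cites from \cite{Hua} and \cite{Vo81}. The only cosmetic deviations are that you pass to irreducible constituents of the degenerate principal series and argue via Theorem~\ref{geldd} where the paper invokes Fact~\ref{suff}, and that you present the complex case as ``replacing'' $U(\frak{g})/J$ rather than as verifying its unitarizability.
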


\begin{proof}
It is enough to see that the $(\frak{g},K)$-modules $X$
 in the proof of Theorem~\ref{thm:ddpair} can be taken
 to be unitarizable
 in all cases.
For highest weight $(\frak{g}, K)$-modules, 
 we can take (for example) holomorphic discrete series representations.
For minimal representations, see \cite{To}.
For $X=U(\frak{g})/J$ with $\frak{g}$ complex
 and $J$ the Joseph ideal, see
 \cite[\S 12.4]{Hua} and \cite{Vo81}.
For $A_\frak{q}(\lambda)$
 and degenerate principal series representations, 
 we use the fact that
 the Zuckerman derived functor preserves
 unitarity under a certain positivity condition
 and that the classical parabolic induction
 always preserves unitarity.
\end{proof}

We pin down a special case
 that ${\frak{g}}$ is a complex simple Lie algebra:
\begin{cor}
\label{complex}
Suppose that $\frak{g}$ is a complex simple Lie algebra
 and $\frak{g}^\sigma$ is a real form of $\frak{g}$.
We regard the pair $(\frak{g}, \frak{g}^\sigma)$ as a symmetric pair of real Lie algebras.
Then the following six conditions on $(\frak{g}, \frak{g}^\sigma)$ are equivalent.
\begin{enumerate}
\item[{\rm(i)}]
There exists an infinite-dimensional irreducible $(\frak{g},K)$-module $X$
 such that $X$
 is discretely decomposable as a $(\frak{g}^\sigma,K^\sigma)$-module.
\item[{\rm(ii)}] 
 $\op{pr}(K_\bb{C}\cdot\frak{p}^*_{\beta})\subset 
{\cal N}({\frak{p}_\bb{C}^\sigma}^*)$.
\item[{\rm(iii)}] 
 $\sigma\beta\neq -\beta$
 ($\beta$ is the highest non-compact root given in Definition~\ref{beta}).
\item[{\rm(iv)}] 
The minimal nilpotent orbit of $\frak{g}$ does not intersect with 
 the real form $\frak{g}^\sigma$.
\item[{\rm{(v)}}]
The minimal nilpotent orbit of $\frak{g}^\sigma_\bb{C}(\simeq\frak{g})$
 does not intersect with $\frak{p}^\sigma_{\bb{C}}$.  
\item[{\rm(vi)}] 
The real form $\frak{g}^\sigma$ of $\frak{g}$
 is compact, or is isomorphic to
 $\frak{su}^*(2n)$, $\frak{so}(n-1,1) (n\geq 5)$, 
 $\frak{sp}(m,n)$, $\frak{f}_{4(-20)}$, or $\frak{e}_{6(-26)}$.
\end{enumerate}
\end{cor}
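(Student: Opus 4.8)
The plan is to split the six conditions into a representation-theoretic cluster $\{(i),(ii),(iii)\}$ and a nilpotent-orbit cluster $\{(iv),(v),(vi)\}$, and to join them through the explicit classification. The first cluster is essentially immediate from the results already in hand. A complex simple Lie algebra is simple as a real Lie algebra and is never of Hermitian type, since its maximal compact $\frak{k}$ is semisimple and hence $\frak{z}_K=0$; thus Theorem~\ref{thm:ddpair} applied to $(\frak{g},\frak{g}^\sigma)$ yields $(i)\Leftrightarrow(iii)$, while Lemma~\ref{projorbit} gives $(ii)\Leftrightarrow(iii)$ at once (condition $(ii)$ is exactly the inclusion appearing there, and $(iii)$ is $\sigma\beta\neq-\beta$). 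The real work is therefore to tie $(iii)$ to the orbit cluster.

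The bridge rests on one structural identification. For the complex group $G$ the $K_\bb{C}$-module $\frak{p}_\bb{C}^*$ is the coadjoint representation of $\frak{g}$, so $\beta$ is the highest root of $\frak{g}$ and $\overline{\bb{O}_{\rm min}}=K_\bb{C}\cdot\frak{p}_\beta^*$ is the closure of the minimal nilpotent orbit $\bb{O}_{{\rm min},\bb{C}}$ of $\frak{g}$. I would then read Lemma~\ref{orbitcap} for the real form $\frak{g}^\sigma$ of $\frak{g}$ (the substitution flagged just after that lemma): condition $(v)$ is precisely its Case $(1)$, namely $\bb{O}_{{\rm min},\bb{C}}\cap\frak{p}^\sigma_\bb{C}=\emptyset$.

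I would close the orbit cluster as follows. For $(iv)\Leftrightarrow(v)$ I would invoke the Kostant--Sekiguchi correspondence: a complex nilpotent orbit contains a point of the real form $\frak{g}^\sigma$ if and only if it meets $\frak{p}^\sigma_\bb{C}$; applied to $\bb{O}_{{\rm min},\bb{C}}$ this is exactly the equivalence of $(iv)$ and $(v)$. For $(v)\Leftrightarrow(vi)$ I would quote the determination of the Case $(1)$ real forms (from \cite{KR}, together with the dimension computation of \cite{Ok} recorded in the table of $m(\frak{g})$): Case $(1)$ holds exactly when $\frak{g}^\sigma$ is compact (where $\frak{p}^\sigma=0$ and the condition is vacuous) or is one of $\frak{su}^*(2n)$, $\frak{so}(n-1,1)\ (n\ge 5)$, $\frak{sp}(m,n)$, $\frak{f}_{4(-20)}$, $\frak{e}_{6(-26)}$, which is $(vi)$.

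It remains to join the two clusters, i.e. to prove $(iii)\Leftrightarrow(vi)$, and this is where the substance lies. I would obtain it by reading off Theorem~\ref{thm:ddpair}$(iii)$ in the present case: holomorphic type $(b)$ cannot occur for complex $\frak{g}$, so $(iii)$ amounts to $\frak{g}^\sigma=\frak{k}$ (the compact form, case $(a)$) together with the rows of Table~\ref{ddpairs} having $\frak{g}$ complex and $\frak{g}^\sigma$ a real form (as opposed to the complex members of those rows); extracting those rows produces precisely the five families of $(vi)$. The main obstacle is thus not any single deduction but the verification that the branching-theoretic classification (Table~\ref{ddpairs}, equivalently Lemma~\ref{ddnothol} and \cite{KoOs}) and the independent nilpotent-orbit classification of \cite{Ok} isolate the same list of real forms; that coincidence is the real point, and one must also be careful in the Kostant--Sekiguchi step to pass correctly between real points of $\frak{g}^\sigma$ and points of $\frak{p}^\sigma_\bb{C}$. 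A more conceptual alternative would be to prove $(ii)\Leftrightarrow(v)$ directly by analyzing $\op{pr}$ on $\overline{\bb{O}_{\rm min}}$, but the appearance of the two distinct complexifications $\frak{g}_\bb{C}=\frak{g}\oplus\overline{\frak{g}}$ and $\frak{g}^\sigma_\bb{C}\simeq\frak{g}$ makes that route delicate, so I would keep the classification as the bridge.
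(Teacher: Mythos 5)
Your proposal is correct, and most of it coincides with the paper's own argument: the equivalences (i) $\Leftrightarrow$ (iii) and (ii) $\Leftrightarrow$ (iii) via Theorem~\ref{thm:ddpair} and Lemma~\ref{projorbit}, the step (iii) $\Leftrightarrow$ (vi) by extracting case (a) and the complex rows of Table~\ref{ddpairs} (holomorphic type being impossible since $\frak{k}$ is the semisimple compact real form), and (iv) $\Leftrightarrow$ (v) via the Kostant--Sekiguchi correspondence are exactly the paper's steps. Where you genuinely diverge is the bridge between the two clusters. The paper never quotes the classification list of Case (1) real forms for (v) $\Leftrightarrow$ (vi); instead it proves (iii) $\Leftrightarrow$ (iv) directly: using the isomorphism $\iota\colon\frak{k}_\bb{C}\xrightarrow{\sim}\frak{g}$ that is the identity on $\frak{k}$, it identifies $\frak{p}_\bb{C}^*$ with the coadjoint representation of $\frak{g}$ and $\beta$ with the highest root of $\Delta^+(\frak{g},\frak{h})$, notes that $\sigma\beta=-\beta$ is equivalent to the highest root vanishing on $\frak{t}^\sigma$ (i.e.\ being a real root), and then invokes Okuda's root-theoretic \emph{criterion} (the minimal complex nilpotent orbit meets the real form if and only if the highest root is real) rather than his classification. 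This difference matters for what the corollary accomplishes: the paper's route \emph{derives} the coincidence of the branching-theoretic list (Table~\ref{ddpairs}) with the orbit-theoretic one, which is why Section 2 can announce the corollary as providing ``a classification'' of Case (1); your route presupposes the orbit-theoretic classification from \cite{Ok} and merely observes that the two independently obtained lists agree --- logically sound (the list is indeed recorded in the paper's table of $m(\frak{g})$ with a citation to \cite{Ok}), but it proves less and rests on precisely the coincidence that the paper's argument explains. Two minor inaccuracies: the Case (1) classification should be credited to \cite{Ok} alone, since \cite{KR} gives only the general structure of $K_\bb{C}$-orbits in ${\cal N}(\frak{p}_\bb{C}^*)$, not the list; and the ``delicate'' passage between the two complexifications $\frak{g}_\bb{C}\simeq\frak{g}\oplus\overline{\frak{g}}$ and $\frak{g}^\sigma_\bb{C}\simeq\frak{g}$ that led you to avoid a direct link is exactly what the paper's isomorphism $\iota$ handles, so the more conceptual route you set aside is essentially the one the paper takes.
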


\begin{rem}
\rm{
{(1)}\enspace
Corollary~\ref{complex} generalizes
 \cite[Theorem 8.1]{kob98ii}, 
 which dealt with split real forms 
 $\frak {g}^{\sigma}$ of $\frak {g}$.  
\newline
{(2)}\enspace
In the condition (vi) of Corollary~\ref{complex},
 the associated symmetric pair
 $(\frak {g}, \frak {g}^{\theta \sigma})$
 is 
 $(\frak {g}, \frak {g})$, 
 or is a complex symmetric pair 
 $(\frak {sl}(2n,\bb {C}), \frak {sp}(n,\bb{C}))$, 
 $(\frak {so}(n,\bb {C}), \frak {so}(n-1,\bb{C})) (n\geq 5)$, 
 $(\frak {sp}(m+n,\bb {C}), \frak {sp}(m,\bb{C})\oplus\frak{sp}(n,\bb{C}))$, 
 $(\frak {f}_{4}^\bb{C}, \frak {so}(9,\bb{C}))$, or
 $(\frak {e}_{6}^\bb{C}, \frak {f}_4^\bb{C})$, respectively.  
}
\end{rem}
\begin{proof}
The equivalence of (i), (ii), (iii) and (vi) follows from
 Theorem~\ref{thm:ddpair} and Lemma~\ref{projorbit}.
If $\frak{g}$ is a complex simple Lie algebra,
 then $\frak {k}$ is a real form of 
$\frak g$ and there is a natural 
isomorphism of complex Lie algebras
 $\iota:\frak{k}_\bb{C}\xrightarrow{\sim}\frak{g}$
 that is identity on $\frak{k}$.
Then 
 $\iota(\frak{k}^\sigma+\sqrt{-1}\frak{k}^{-\sigma})=\frak{g}^\sigma$.
Put $\frak{a}:=\iota(\sqrt{-1}\frak{t}^{-\sigma})$,
 $\frak{h}:=\iota(\frak{t}_\bb{C})$,
 and let $\Delta^+(\frak{g},\frak{h})$ be the positive system
 corresponding to $\Delta^+(\frak{k}_\bb{C},\frak{t}_\bb{C})$.
Then $\frak{h}$ is a Cartan subalgebra of $\frak{g}$, 
 $\frak{a}$ is a maximal abelian subalgebra of $\frak{p}^\sigma$,
 and $\Delta^+(\frak{g},\frak{h})$ is compatible
 with some positive system $\Sigma^+(\frak{g},\frak{a})$
 of the restricted root system.
Under the isomorphism $\iota$, 
 the $\frak{k}_\bb{C}$-module $\frak{p}^*_\bb{C}$
 can be identified with the adjoint representation of $\frak{g}$.
Then the weight $\beta$ corresponds to
 the highest root in $\Delta^+(\frak{g},\frak{h})$
 and the condition (iii) amounts to that the highest root is
 zero on $\frak{t}^\sigma$ (i.e.\ a real root).
By a result of T.~Okuda~\cite{Ok}, this is equivalent to (iv).
The equivalence of (iv) and (v) follows from
 the Kostant--Sekiguchi correspondence \cite[Proposition 1.11]{Se}.
\end{proof}


\section{Discretely Decomposable Tensor Product}
\label{sec:tensor}

The tensor product 
 of two irreducible representations
 is regarded as a special case of our setting.

Let $G$ be a connected simple Lie group.
Let $\frak{g}=\frak{k}+\frak{p}$ be a Cartan decomposition
 of the Lie algebra
 and $K$ the connected subgroup with Lie algebra $\frak{k}$. 
Put $\widetilde{G}=G\times G$, $\widetilde{K}=K\times K$ 
 and let $\sigma$ act on $\widetilde{G}$ by switching factors.
Then any irreducible $(\widetilde{\frak{g}}, \widetilde{K})$-module $X$
 is of the form of the exterior product $X_1\boxtimes X_2$
 with two irreducible $(\frak{g},K)$-modules
 $X_1$ and $X_2$.
Then $X$, 
 regarded as
 a $(\widetilde{\frak{g}}^\sigma, \widetilde{K}^\sigma)$-module
 by restriction,
 is nothing but the tensor product representation $X_1\otimes X_2$.
The following theorem determines
 when $X_1 \otimes X_2$ is discretely decomposable.  

\begin{thm}
\label{tensor}
Let $G$ be a  non-compact connected simple Lie group.
Let $X_1$ and $X_2$ be
 infinite-dimensional irreducible $(\frak{g}, K)$-modules.
Then the tensor product representation
 $X_1\otimes X_2$ is discretely decomposable as a $(\frak{g}, K)$-module
 if and only if $G$ is of Hermitian type
 and both $X_1$ and $X_2$ are simultaneously
 highest weight $(\frak{g},K)$-modules
 or simultaneously lowest weight $(\frak{g},K)$-modules.
\end{thm}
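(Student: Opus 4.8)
The plan is to treat the group case $(\widetilde{\frak g},\widetilde{\frak g}^\sigma)=(\frak g\oplus\frak g,\operatorname{diag}\frak g)$ through the associated-variety machinery of Sections~\ref{sec:ass}--\ref{sec:dd}. Writing $V_i:=\mathcal{V}_{\frak g}(X_i)$, I would first record two structural facts: the associated variety of an exterior tensor product factors, $\mathcal{V}_{\widetilde{\frak g}}(X_1\boxtimes X_2)=V_1\times V_2\subset\frak p_{\bb C}^*\oplus\frak p_{\bb C}^*$ (via a good filtration whose graded module is $\op{gr}X_1\boxtimes\op{gr}X_2$), and that the restriction map $\op{pr}\colon\widetilde{\frak g}_{\bb C}^*\to(\widetilde{\frak g}^\sigma)_{\bb C}^*$ is, under the identification $(\operatorname{diag}\frak g)^*\cong\frak g^*$, nothing but $(\xi,\eta)\mapsto\xi+\eta$. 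Thus Fact~\ref{nec} translates discrete decomposability into the concrete requirement that $\xi+\eta\in{\cal N}(\frak g_{\bb C}^*)$ for every $\xi\in V_1$ and $\eta\in V_2$, and this single condition drives the whole ``only if'' direction.

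For necessity I would exploit the semisimple-element trick already used in Lemma~\ref{projorbit}: if $x\in\frak p_\beta^*$ then its complex conjugate $\overline x$ lies in $\frak p_{-\beta}^*$, and $x+\overline x$ is a nonzero element of $\frak p^*$, hence semisimple and not nilpotent. So discrete decomposability fails as soon as $V_1\supset\frak p_\beta^*$ and $V_2\supset\frak p_{-\beta}^*$ (or vice versa). When $\frak g$ is not of Hermitian type there is a unique minimal $K_{\bb C}$-orbit, which is stable under complex conjugation, so each nonzero $V_i$ contains $\overline{\bb O_{\min}}\supset\frak p_\beta^*\cup\frak p_{-\beta}^*$; this at once rules out the non-Hermitian case. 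When $\frak g$ is of Hermitian type, the same trick (together with Proposition~\ref{minorbit}) forces each $V_i$ to contain exactly one of the two minimal orbits $\bb O_{\min,\pm}$ and forces $V_1,V_2$ to select the same one; say both contain $\bb O_{\min,+}$. To upgrade this to $V_1,V_2\subset\frak p_+^*$ I would run the $z$-flow limiting argument of Proposition~\ref{minorbit}(2): with $z\in\sqrt{-1}\frak z_K$ acting by $+1$ on $\frak p_+^*$ and $-1$ on $\frak p_-^*$, any $\eta\in V_2$ with nonzero $\frak p_-^*$-component $\eta_-$ yields $e^{s}\exp(s\,\op{ad}z)\eta=e^{2s}\eta_++\eta_-\to\eta_-$ as $s\to-\infty$, so $\eta_-$, and then all of $\frak p_{-\beta}^*$, lies in the closed $K_{\bb C}$-stable cone $V_2$, contradicting that $V_2$ meets only $\bb O_{\min,+}$. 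Hence $V_1,V_2\subset\frak p_+^*$, and Lemma~\ref{lem:hwas} identifies $X_1,X_2$ as simultaneously lowest weight modules (the conjugate configuration gives simultaneously highest weight).

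For sufficiency I would use the asymptotic-support criterion, Fact~\ref{suff}. Assume both $X_i$ are highest weight, so $V_i\subset\frak p_-^*$ by Lemma~\ref{lem:hwas}. By Lemma~\ref{askvar}, $\op{AS}_K(X_i)=\op{AS}_K(\mathcal O(V_i))\subset\op{AS}_K(\mathcal O(\frak p_-^*))$, and since $\mathcal O(\frak p_-^*)\simeq S(\frak p_-)$ has every weight a sum of $\frak p_-$-weights, the computation in the proof of Lemma~\ref{lem:hwas} gives negative $z$-eigenvalues; a degree count even yields $\langle\lambda,z\rangle\le -c\lvert\lambda\rvert$ for a fixed $c>0$ along these weights, so $\op{AS}_K(X_i)$ lies in the salient cone $\{\langle\,\cdot\,,z\rangle<0\}$. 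Computing $(\widetilde{\frak t}^\sigma)^\perp=\{(\lambda,-\lambda)\}$ and $\op{AS}_{\widetilde K}(X_1\boxtimes X_2)\subset\overline{\op{AS}_K(X_1)}\times\overline{\op{AS}_K(X_2)}$, a point $(\lambda,-\lambda)$ in the intersection with $\lambda\ne0$ would force $\langle\lambda,z\rangle<0$ and $\langle-\lambda,z\rangle<0$ at once, which is impossible. Thus $\op{AS}_{\widetilde K}(X_1\boxtimes X_2)\cap\sqrt{-1}(\widetilde{\frak t}^\sigma)^\perp=\emptyset$ and Fact~\ref{suff} yields discrete decomposability; the simultaneously lowest weight case is symmetric.

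The main obstacle I anticipate is the necessity half, and specifically passing from information about the minimal orbits contained in $V_i$ to control of the \emph{entire} associated variety. The semisimple-element trick only sees the extreme weight lines $\frak p_{\pm\beta}^*$; the real work is the limiting argument showing that a single wrong-sign vector anywhere in $V_2$ propagates down to $\frak p_{-\beta}^*$ and thereby forces the presence of the second minimal orbit. Verifying that complex conjugation fixes the unique minimal orbit in the non-Hermitian case, and the uniform cone estimate for $S(\frak p_-)$ in the sufficiency step, are the other points requiring care, though I expect both to be routine given Proposition~\ref{minorbit} and Lemma~\ref{askvar}.
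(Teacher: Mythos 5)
Your proof is correct, and its \lq\lq{only if}\rq\rq\ half is essentially the paper's: both arguments factor $\mathcal{V}_{\widetilde{\frak g}}(X_1\boxtimes X_2)=\mathcal{V}_{\frak g}(X_1)\oplus\mathcal{V}_{\frak g}(X_2)$, identify $\op{pr}$ with $(\xi,\eta)\mapsto\xi+\eta$, feed this into Fact~\ref{nec}, and use the semisimple element $x+\overline{x}$ together with the limiting arguments from Proposition~\ref{minorbit} and Lemma~\ref{lem:hwas} (the paper organizes the Hermitian case contrapositively via four cases and reduces to \lq\lq{$X_1$ not lowest, $X_2$ not highest}\rq\rq, whereas you argue directly that both associated varieties must select the same minimal orbit and then propagate any wrong-sign component down to $\frak p^*_{\mp\beta}$; these are the same ingredients in a different order). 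The genuine divergence is in the \lq\lq{if}\rq\rq\ half: the paper simply cites \cite[Theorem 7.4]{kob98iii}, while you give a self-contained proof via Fact~\ref{suff}, bounding $\op{AS}_K(X_i)$ inside the salient cone $\{\langle\cdot,z\rangle\le -c|\cdot|\}$ using Lemma~\ref{askvar} and the quotient $S(\frak p_-)\twoheadrightarrow\mathcal{O}(\mathcal{V}_{\frak g}(X_i))$, so that the product support cannot meet $\sqrt{-1}(\widetilde{\frak t}^\sigma)^\perp=\{(\lambda,-\lambda)\}$. This buys independence from the external reference at the cost of two points you should make explicit: (a) Fact~\ref{suff} is stated for a $(-\sigma)$-compatible positive system, which for the swap involution is $\Delta^+\times(-\Delta^+)$, so the second factor of $\op{AS}_{\widetilde K}(X_1\boxtimes X_2)$ is the $w_0$-translate of $\op{AS}_K(X_2)$ --- harmless because $w_0$ fixes $\frak z_K$ and hence preserves $z$-eigenvalues, but it must be said; and (b) the containment $\op{AS}_{\widetilde K}(X_1\boxtimes X_2)\subset(\op{AS}_K(X_1)\cup\{0\})\times(\op{AS}_K(X_2)\cup\{0\})$, which follows from projecting limit cones of the product support set and suffices since any nonzero $(\lambda,-\lambda)$ has both components nonzero. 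With these two details filled in, your sufficiency argument is sound and arguably more informative than the citation, since it exhibits the mechanism (opposite-sign $z$-cones) that makes simultaneous highest (or lowest) weight modules admissible.
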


\begin{proof}
If $X_1$ and $X_2$ are both highest weight $(\frak{g},K)$-modules
 or they are both lowest weight $(\frak{g},K)$-modules, it is known that
 the tensor product $X_1\otimes X_2$
 is discretely decomposable
 (see \cite[Theorem 7.4]{kob98iii}).

Conversely, let us prove that
 $X_1\otimes X_2$ is not discretely decomposable
 as a $(\frak{g}, K)$-module unless $X_1$ and $X_2$
 are highest weight modules or they are lowest weight modules.
Let $\frak{t}$ be a Cartan subalgebra of $\frak{k}$.
Fix a positive system $\Delta^+(\frak{k}_\bb{C},\frak{t}_\bb{C})$.
We set $\widetilde{\frak{g}}=\frak{g} \oplus \frak{g}$, 
 $\widetilde{\frak{k}}=\frak{k} \oplus \frak{k}$, and
 $\widetilde{\frak{t}}=\frak{t} \oplus \frak{t}$.  
Then $\widetilde{\frak{t}}$
 is a Cartan subalgebra of $\widetilde{\frak{k}}$.  
We have an isomorphism $\widetilde{\frak{g}}^\sigma\simeq \frak{g}$ and 
 the restriction map
 $\op{pr}:\widetilde{\frak{g}}_\bb{C}^*
 \to {\widetilde{\frak{g}}}_\bb{C}^{\sigma*}$
 is identified with the map
 $\frak{g}_\bb{C}^*\oplus \frak{g}_\bb{C}^*\to \frak{g}_\bb{C}^*$
 given by $(x,y)\mapsto x+y$.
We take a positive system
 $\Delta^+(\widetilde{\frak{k}}_\bb{C},\widetilde{\frak{t}}_\bb{C})$ 
 to be the union of $\Delta^+(\frak{k}_\bb{C},\frak{t}_\bb{C})$ 
 in the first factor and $-\Delta^+(\frak{k}_\bb{C},\frak{t}_\bb{C})$ 
 in the second factor so that
 $\Delta^+(\widetilde{\frak{k}}_\bb{C},\widetilde{\frak{t}}_\bb{C})$
 is $(-\sigma)$-compatible.
Let $\beta\in\sqrt{-1}\frak{t}$ be
 the highest non-compact root given in Definition~\ref{beta}.

Suppose that $\frak{g}$ is not of Hermitian type.
Since $X_1$ and $X_2$ are infinite-dimensional, 
 we have ${\cal V}_{\frak{g}}(X_1), {\cal V}_{\frak{g}}(X_2)\neq \{0\}$
 by Lemma~\ref{lem:ass}(2).
Hence they contain $K_\bb{C}\cdot\frak{p}_\beta^*$
 and $K_\bb{C}\cdot\frak{p}_{-\beta}^*$, in particular 
 ${\cal V}_{\frak{g}}(X_1)\supset \frak{p}^*_{\beta}$
 and ${\cal V}_{\frak{g}}(X_2)\supset \frak{p}^*_{-\beta}$.
We therefore have 
\begin{align*}
{\cal V}_{\widetilde{\frak{g}}}(X_1\boxtimes X_2)
={\cal V}_{\frak{g}}(X_1)\oplus{\cal V}_{\frak{g}}(X_2)
\supset \frak{p}^*_{\beta}\oplus\frak{p}^*_{-\beta}.
\end{align*}
As in the proof of Lemma~\ref{projorbit}, 
 we can see 
$\op{pr}(\frak{p}^*_{\beta}\oplus\frak{p}^*_{-\beta})
 \not\subset {\cal N}(\frak{p}^*_\bb{C})$ and hence
$\op{pr}({\cal V}_{\widetilde{\frak{g}}}(X_1\boxtimes X_2))
\not\subset {\cal N}(\frak{p}^*_\bb{C})$.
Therefore Fact~\ref{nec} shows that
 $X_1\otimes X_2$ is not discretely decomposable.

Suppose that $\frak{g}$ is of Hermitian type.
By Lemma~\ref{lem:ass} (2) and Lemma~\ref{lem:hwas},
 if a highest weight $(\frak{g},K)$-module $X$ is
 also a lowest weight $(\frak{g},K)$-module,
 then $X$ is finite-dimensional.
Since $X_1$ and $X_2$ are infinite-dimensional,
 at least one of the following holds.
\begin{enumerate}
\item[(1)] $X_1$ and $X_2$ are highest weight modules.
\item[(2)] $X_1$ and $X_2$ are lowest weight modules.
\item[(3)] $X_1$ is not a lowest weight module
 and $X_2$ is not a highest weight module.
\item[(4)] $X_1$ is not a highest weight module
 and $X_2$ is not a lowest weight module.
\end{enumerate}
By switching $X_1$ and $X_2$ if necessary,
 it is enough to prove that $X_1\otimes X_2$
 is not discretely decomposable under
 the assumption (3).
We thus assume
 that $X_1$ is not a lowest weight $(\frak{g},K)$-module
 and $X_2$ is not a highest weight $(\frak{g},K)$-module.
By Lemma~\ref{lem:hwas}, 
 this assumption is equivalent to
 ${\cal V}_{\frak{g}}(X_1)\not\subset\frak{p}^*_-$
 and  ${\cal V}_{\frak{g}}(X_2)\not\subset\frak{p}^*_+$.
Hence it follows from the proof of Proposition~\ref{minorbit}
 that ${\cal V}_{\frak{g}}(X_1)\supset\frak{p}^*_{\beta}$
 and  ${\cal V}_{\frak{g}}(X_2)\supset\frak{p}^*_{-\beta}$.
Then by using the previous argument
 we see that $X_1\otimes X_2$ is not discretely decomposable.
\end{proof}

\end{document}